%_____________________________________LAST MODIFICATION JV da Silva March 19, 2024
\documentclass[11pt, leqno]{article}

\usepackage{amsmath,amssymb,amsthm, epsfig}

\usepackage{hyperref}

\usepackage{amsmath}
\usepackage{comment}
\usepackage[pdftex]{color}

\usepackage{mathtools}
\usepackage{color}

\usepackage{ulem}

\usepackage{mathrsfs}

\usepackage{wasysym}

\usepackage{stackrel}

\usepackage{pgfplots}
\pgfplotsset{compat=newest}

%\renewcommand{\baselinestretch}{1.3} %espacamento

%_____________________________________

\title{\large{\bf Geometric regularity estimates for quasi-linear elliptic models in non-divergence form with strong absorption}}
\author{\it by \smallskip \\ Claudemir Alcantara \footnote{\noindent Pontifícia Universidade Católica do Rio de Janeiro - PUC-Rio - Departamento de Matemática. Rio de Janeiro - RJ, Brazil \noindent \texttt{E-mail address: \url{alcantara@mat.puc-rio.br}}},\quad Jo\~{a}o Vitor da Silva
\footnote{\noindent Universidade Estadual de Campinas - UNICAMP. Departamento  de Matemática. Campinas - SP, Brazil. \noindent \texttt{E-mail address: \url{jdasilva@unicamp.br}}}\\$\&$\\ Ginaldo de Santana S\'{a}\footnote{\noindent Universidade Estadual de Campinas - UNICAMP. Departamento  de Matemática. Campinas - SP, Brazil. \noindent \texttt{E-mail address: \url{ginaldo@ime.unicamp.br}}}
}

%_____________________________________

%%%%%%%%%%%%%%_______________

\newlength{\hchng}
\newlength{\vchng}
\setlength{\hchng}{0.55in} \setlength{\vchng}{0.55in}
\addtolength{\oddsidemargin}{-\hchng}
\addtolength{\textwidth}{2\hchng} \addtolength{\topmargin}{-\vchng}
\addtolength{\textheight}{2\vchng}

%_____________________________________

%_____________________________________

\def \R {\mathbb{R}}

\def \dist {\mathrm{dist}}

\newcommand{\defeq}{\mathrel{\mathop:}=}
\newcommand{\NDelta}{\Delta_p^{\mathrm{N}}}

\newcommand{\intav}[1]{\mathchoice {\mathop{\vrule width 6pt height 3 pt depth  -2.5pt
\kern -8pt \intop}\nolimits_{\kern -6pt#1}} {\mathop{\vrule width
5pt height 3  pt depth -2.6pt \kern -6pt \intop}\nolimits_{#1}}
{\mathop{\vrule width 5pt height 3 pt depth -2.6pt \kern -6pt
\intop}\nolimits_{#1}} {\mathop{\vrule width 5pt height 3 pt depth
-2.6pt \kern -6pt \intop}\nolimits_{#1}}}

%%%%%%%%%%%%%%_______________

\newtheorem{theorem}{Theorem}[section]

\newtheorem{lemma}[theorem]{Lemma}

\newtheorem{proposition}[theorem]{Proposition}

\newtheorem{corollary}[theorem]{Corollary}

\theoremstyle{definition}

\newtheorem{definition}[theorem]{Definition}

\theoremstyle{remark}

\newtheorem{Assumption}{{\bf $\mathrm{A}$}}

\newtheorem{remark}[theorem]{Remark}

\numberwithin{equation}{section}

%_________________

%-------------------

%_________________

%_____________________________________

\begin{document}
\maketitle

\begin{abstract}

In this manuscript, we investigate geometric regularity estimates for problems governed by quasi-linear elliptic models in non-divergence form, which may exhibit either degenerate or singular behavior when the gradient vanishes, under strong absorption conditions of the form:
\[
|\nabla u(x)|^{\gamma} \Delta_p^{\mathrm{N}} u(x) = f(x, u) \quad \text{in} \quad B_1,
\]
where \(\gamma > -1\), \(p \in (1, \infty)\), and the mapping \(u \mapsto f(x, u) \lesssim \mathfrak{a}(x) u_{+}^m\) (with \(m \in [0, \gamma + 1)\)) does not decay sufficiently fast at the origin. This condition allows for the emergence of plateau regions, i.e., a priori unknown subsets where the non-negative solution vanishes identically. We establish improved geometric \(\mathrm{C}^\kappa_{\text{loc}}\) regularity along the set \(\mathscr{F}_0 = \partial \{u > 0\} \cap B_1\) (the free boundary of the model) for a sharp value of \(\kappa \gg 1\), which is explicitly determined in terms of the structural parameters. Additionally, we derive non-degeneracy results and other measure-theoretic properties. Furthermore, we prove a sharp Liouville theorem for entire solutions exhibiting controlled growth at infinity.

\medskip
\noindent \textbf{Keywords}: Geometric regularity estimates, quasi-linear elliptic models, non-divergence form operators.
\vspace{0.2cm}
	
\noindent \textbf{AMS Subject Classification: Primary 35B65, 35J60, 35J70, 35J75; Secondary 35D40}
\end{abstract}
%\tableofcontents

\newpage

\section{Introduction}

In this article, we investigate sharp regularity estimates for diffusion problems governed by quasi-linear operators in non-divergence form (with singular or degenerate characteristics) for which a Minimum Principle does not apply. Specifically, we focus on prototypical models arising in Tug-of-War games (stochastic models with noise), where the presence of dead core regions plays a fundamental role in understanding the nature of such models (cf. \cite{Attou18}, \cite{BlancRossi-Book}, \cite{Lewicka20}, and \cite{Parvia2024}).

The model under consideration is given by
\begin{equation}\label{Problem}
|\nabla u(x)|^\gamma \Delta_p^{\mathrm{N}} u(x) = f(x, u) \lesssim \mathfrak{a}(x)u_+^m(x) \quad \text{in} \quad \Omega,
\end{equation}
where 
\begin{itemize}
    \item[(\textbf{A0})] \( p \in (1, \infty) \) is a fixed parameter (related to the elliptic nature of the operator);
    \item[(\textbf{A1})] \( \gamma \in (-1, \infty) \) governs the singularity/degeneracy of the model;
    \item[(\textbf{A2})] \( m \in [0, \gamma + 1) \) represents the strong absorption exponent of the model.
\end{itemize}
Additionally, we impose the boundary condition
\begin{equation}\label{Bound-Cond}
 u(x) = g(x) \quad \text{on } \partial \Omega,
\end{equation}
where the boundary datum is assumed to be continuous and non-negative, i.e., \( 0 \leq g \in \mathrm{C}^0(\partial \Omega) \).

Throughout this manuscript, for a fixed \( p \in (1, \infty) \), the operator
\[
\Delta^{\mathrm{N}}_p u = |\nabla u|^{2-p} \operatorname{div}(|\nabla u|^{p-2} \nabla u) = \Delta u + (p-2) \left\langle D^2 u \frac{\nabla u}{|\nabla u|}, \frac{\nabla u}{|\nabla u|}\right\rangle
\]
denotes the \textit{Normalized \( p \)-Laplacian operator} or \textit{Game-theoretic \( p \)-Laplace operator}, which arises in certain stochastic models (see \cite{BlancRossi-Book}, \cite{Lewicka20}, and \cite{Parvia2024} for comprehensive surveys on Tug-of-War games and their associated PDEs). Furthermore, the fully nonlinear operator
\[
\Delta_{\infty}^{\mathrm{N}} u(x) \coloneqq \left\langle D^2 u \frac{\nabla u}{|\nabla u|}, \frac{\nabla u}{|\nabla u|}\right\rangle
\]
is referred to as the \textit{Normalized infinity-Laplacian}, which also plays a significant role in the interplay between nonlinear PDEs and Tug-of-War games (cf. \cite{BlancRossi-Book} and \cite{Parvia2024}).

We assert that the Normalized \( p \)-Laplacian operator is ``uniformly elliptic'' in the sense that
\[
\mathcal{M}^{-}_{\lambda, \Lambda}(D^2 u) \leq \Delta_p^{\mathrm{N}} u \leq \mathcal{M}^{+}_{\lambda, \Lambda}(D^2 u),
\]
where
\begin{equation}\label{Pucci}\tag{Pucci}
\mathcal{M}^{-}_{\lambda, \Lambda}(D^2 u) \coloneqq \inf_{\mathfrak{A} \in \mathcal{A}_{\lambda, \Lambda}} \text{Tr}(\mathfrak{A} D^2 u) \quad \text{and} \quad 
\mathcal{M}^{+}_{\lambda, \Lambda}(D^2 u) \coloneqq \sup_{\mathfrak{A} \in \mathcal{A}_{\lambda, \Lambda}} \text{Tr}(\mathfrak{A} D^2 u)
\end{equation}
are the \textit{Pucci extremal operators}, and \(\mathcal{A}_{\lambda, \Lambda}\) denotes the set of symmetric \( n \times n \) matrices whose eigenvalues lie within the interval \([\lambda^{\mathrm{N}}_p, \Lambda^{\mathrm{N}}_p]\). Indeed, the Normalized \( p \)-Laplacian operator can be expressed in the form
\[
\Delta_p^{\mathrm{N}} u = \mathrm{Tr}\left[\left(\mathrm{Id}_n + (p - 2) \frac{\nabla u}{|\nabla u|} \otimes \frac{\nabla u}{|\nabla u|}\right) D^2 u\right],
\]
from which it is straightforward to verify that 
\[
\lambda^{\mathrm{N}}_p = \min\{1, p - 1\} \quad \text{and} \quad \Lambda^{\mathrm{N}}_p = \max\{1, p - 1\}.
\]

Next, we detail our assumption, which pertains to the bounded \textit{Thiele modulus} or modulating function \(\mathfrak{a}\).

\begin{Assumption}[\textbf{Thiele modulus}]\label{Assumption_lambda}
We assume that \(\mathfrak{a}: \Omega \to \mathbb{R}_+\) (referred to as the \textit{Thiele modulus} or modulating function) is a continuous and bounded function (strictly positive and finite). Specifically, there exist constants \(\Lambda_0 \geq \lambda_0 > 0\) such that
\[
\Lambda_0 \ge \sup_{\Omega} \mathfrak{a}(x) \geq \mathfrak{a}(x) \ge \inf_{\Omega} \mathfrak{a} \geq \lambda_0.
\]
\end{Assumption}

In this framework, equation \eqref{Problem} is classified as an equation with a strong absorption condition, and the set \(\partial \{ u > 0 \} \cap \Omega\) constitutes the (physical) free boundary of the problem.

The mathematical analysis of equation \eqref{Problem} is significant not only due to its applications but also because of its connection to various mathematical free boundary problems extensively studied in the literature (cf. Alt–Phillips \cite{Alt-Phillips86}, D\'{i}az \cite{Diaz85}, Friedman–Phillips \cite{Fried-Phill84}, and Phillips \cite{Phillips83}, as well as Shahgholian et al. \cite{KKPS2000} and \cite{Lee-Shahg2003} for related problems with variational structure; additionally, for the non-variational counterpart of these models, see Da Silva et al. \cite{daSLR21} and Teixeira \cite{Teix16}).

A fundamental aspect of free boundary problems like \eqref{Problem} is determining the optimal regularity of viscosity solutions. For instance, if we fix \(0 < m < \gamma + 1\) and \(\mathrm{R} > r > 0\), the radial profile \(\omega: B_{\mathrm{R}}(x_0) \to \mathbb{R}^+\) given by
\begin{equation}\label{Radial-profile}
    \omega(x) = \omega_{\gamma}^{\mathrm{R}}(x) = \mathrm{c}_{n, \gamma, m, p} \left(|x - x_0| - r\right)^{\beta}_+, \quad \text{for} \quad \beta = \frac{\gamma + 2}{\gamma + 1 - m},
\end{equation}
where
\[
\mathrm{c}_{n, \gamma, m, p} = \mathrm{c}(n, \gamma, m, p) = \frac{1}{\beta^{\frac{\gamma + 1}{\gamma + 1 - m}}} \frac{1}{\left[n - 1 + (p - 1)(\beta - 1)\right]^{\frac{1}{\gamma + 1 - m}}} > 0
\]
is a suitable constant, is a viscosity solution to the equation
\[
-|\nabla \omega(x)|^{\gamma} \Delta_p^{\mathrm{N}} \omega(x) + \omega^q(x) = 0 \quad \text{in} \quad B_{\mathrm{R}}(x_0).
\]
It is well-known that, in general, solutions to \eqref{Problem} belong to \(C^{1,\kappa}_{\text{loc}}\) for some \(\kappa \in (0,1)\) (cf. Attouchi–Ruosteenoja \cite{Attou18} for further details). However, for this particular example, with \(\gamma > -1\) fixed, one observes that \(\omega \in C^{\beta}_{\text{loc}}\) at free boundary points, where
\begin{equation}\label{beta-exponent}
\beta = \beta(\gamma, m) := \frac{\gamma + 2}{\gamma + 1 - m}.
\end{equation}
Notably, when \(m > 0\), we obtain
\[
\beta(\gamma, m) = \frac{\gamma + 2}{\gamma + 1 - m} > 1 + \frac{1}{\gamma + 1} \quad \Rightarrow \quad \text{Gain of smoothness along the free boundary.}
\]
In other words, within a small radius, viscosity solutions satisfy the sharp growth estimates
\begin{equation}\label{Growth-Control}
\varrho^{\beta} \lesssim \sup_{B_{\varrho}(x_0)} \omega(x) \lesssim \varrho^{\beta}.
\end{equation}

\subsection{Main results and some consequences}

Inspired by the preceding motivation, we will demonstrate that bounded viscosity solutions of \eqref{Problem} satisfy a growth estimate of the form \eqref{Growth-Control} near free boundary points (see Theorems \ref{Improved Regularity} and \ref{Thm-Non-Deg} for further details).

Next, we establish an improved regularity estimate (near free boundary points) for solutions of \eqref{Problem}, a sort of $\beta-$power-like decay (for $\beta>1$ as in \eqref{beta-exponent}).

\begin{theorem}[{\bf Improved Regularity}]\label{Improved Regularity}
Suppose that $(\mathrm{A}0)-(\mathrm{A}2)$, and $\mathrm{A}{\ref{Assumption_lambda}}$ (upper bound) hold true, and let $u\in \mathrm{C}^0(B_1)$ be  a non-negative bounded viscosity solution to
\[
|\nabla u(x)|^{\gamma}\NDelta u(x) = \mathfrak{a}(x) u_+^m(x) \quad \text{in} \quad B_1
\]
and let $x_0\in \partial\{u>0\}\cap B_{1/2}$. 
 Then, for any point $z\in \{u>0\}\cap B_{1/2}$, there exists a universal constant\footnote{Throughout this work, we will refer to universal constants when they depend only on the dimension and structural parameters of the
problem, i.e., $n,\,p,\, \gamma,\, m$, and the bounds of $\mathfrak{a}$.} $\mathrm{C}_{\mathcal{G}}>0$ such that
\[
u(z)\leq \mathrm{C}_{\mathcal{G}}\|u\|_{L^{\infty}(B_1)}|z-x_0|^{\beta}.
\]
\end{theorem}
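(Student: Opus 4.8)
The strategy is a standard geometric iteration (dyadic scaling) argument, which shows that the natural growth $\sup_{B_r(x_0)}u\lesssim r^{\beta}$ forces the pointwise bound at any interior point $z$ of the positivity set once one takes $r\sim|z-x_0|$. So the plan is to first establish the growth estimate
\[
\sup_{B_r(x_0)}u\;\le\;\mathrm{C}\,\|u\|_{L^\infty(B_1)}\,r^{\beta}\qquad\text{for all }r\in(0,\tfrac14],
\]
and then simply observe that for $z\in\{u>0\}\cap B_{1/2}$ and $r=|z-x_0|\le 1$ we have $z\in \overline{B_r(x_0)}$, so $u(z)\le\sup_{B_r(x_0)}u\le \mathrm{C}_{\mathcal G}\|u\|_{L^\infty(B_1)}|z-x_0|^{\beta}$.

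For the growth estimate I would argue by contradiction and compactness. Normalize so that $\|u\|_{L^\infty(B_1)}\le 1$. Suppose the estimate fails; then there are sequences of solutions $u_k$, free boundary points $x_k\in\partial\{u_k>0\}\cap B_{1/2}$, and radii $r_k\to 0$ with $\sup_{B_{r_k}(x_k)}u_k> k\,r_k^{\beta}$. Define the rescaled functions
\[
v_k(y)\;\defeq\;\frac{u_k(x_k+r_k y)}{\sup_{B_{r_k}(x_k)}u_k},\qquad y\in B_{1/r_k},
\]
which satisfy $0\le v_k\le 1$ on $B_1$, $v_k(0)=0$, and $\sup_{B_1}v_k=1$. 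A direct computation shows $v_k$ solves
\[
|\nabla v_k(y)|^{\gamma}\NDelta v_k(y)
=\frac{r_k^{\gamma+2}}{\left(\sup_{B_{r_k}(x_k)}u_k\right)^{\gamma+1-m}}\,\mathfrak{a}(x_k+r_ky)\,(v_k)_+^m(y),
\]
and since $\sup_{B_{r_k}(x_k)}u_k> k\,r_k^{\beta}$ with $\beta=\frac{\gamma+2}{\gamma+1-m}$, the prefactor is $<\Lambda_0/k^{\gamma+1-m}\to0$. Thus the right-hand sides tend to $0$ locally uniformly. Using the $\mathrm{C}^{1,\alpha}_{\mathrm{loc}}$ estimates available for this class of equations (Attouchi–Ruosteenoja), the $v_k$ are precompact in $\mathrm{C}^1_{\mathrm{loc}}$ and converge along a subsequence to a limit $v_\infty\ge 0$ which is a viscosity solution of the homogeneous equation $|\nabla v_\infty|^{\gamma}\NDelta v_\infty=0$, hence (since this operator is uniformly elliptic in the viscosity sense) of $\NDelta v_\infty=0$, with $v_\infty(0)=0$, $v_\infty\ge 0$, and $\sup_{B_1}v_\infty=1$. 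The strong minimum principle for the normalized $p$-Laplacian then forces $v_\infty\equiv 0$ on $B_1$ (or at least on $B_{1/2}$ via Harnack), contradicting $\sup_{B_1}v_\infty=1$ (one can quantify this by choosing an appropriate intermediate radius so that the convergence is uniform up to the relevant sphere).

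The main obstacle is making the contradiction step quantitative and uniform: one must verify that the $\mathrm{C}^{1,\alpha}$ a priori estimates for $|\nabla v|^\gamma\NDelta v=h$ are genuinely uniform in $k$ (the right-hand sides are uniformly bounded, in fact small, so this is fine, but the singular/degenerate factor $|\nabla v|^\gamma$ requires the Attouchi–Ruosteenoja theory rather than classical Krylov–Safonov), and that the limiting equation is correctly identified — one should be careful that the normalized $p$-Laplacian is only interpreted in the viscosity sense where $\nabla v$ vanishes, but the uniform ellipticity via the Pucci bounds $\mathcal M^-_{\lambda_p^{\mathrm N},\Lambda_p^{\mathrm N}}(D^2v_\infty)\le 0\le\mathcal M^+_{\lambda_p^{\mathrm N},\Lambda_p^{\mathrm N}}(D^2v_\infty)$ already yields the strong minimum principle and Harnack for $v_\infty$ directly, so this is manageable. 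A secondary technical point is ensuring the free boundary condition $v_k(0)=0$ passes to the limit, which follows from locally uniform convergence. Once the growth estimate is in hand, the passage to the pointwise bound at $z$ is immediate as indicated above, absorbing the normalization $\|u\|_{L^\infty(B_1)}$ back in by scaling linearity of the normalization step (the equation is not linear, but the rescaling $u\mapsto u/\|u\|_{L^\infty(B_1)}$ changes $\mathfrak a$ by a bounded factor $\|u\|_{L^\infty(B_1)}^{\gamma+1-m}$, which is harmless since we only used the upper bound on $\mathfrak a$).
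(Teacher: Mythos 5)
Your blow-up step has a genuine gap. After rescaling you obtain $v_k$ with $\sup_{B_1}v_k=1$, $v_k(0)=0$, and $0\le v_k\le 1$ on $B_1$, but the $\mathrm{C}^{1,\alpha}_{\mathrm{loc}}$ estimates only give precompactness on \emph{compact subsets} of $B_1$. The limit $v_\infty$ is defined, solves the homogeneous equation, vanishes at the origin, and hence (by the strong minimum principle) vanishes identically — but only on each $B_\rho$, $\rho<1$. This is perfectly compatible with $\sup_{B_1}v_k=1$ for every $k$: nothing in your setup prevents the points where $v_k$ is close to $1$ from escaping toward $\partial B_1$ (think of $|y|^k$), so no contradiction is reached. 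Your parenthetical remark about ``choosing an appropriate intermediate radius'' does not close this, since you have no uniform lower bound on $\sup_{B_\rho}v_k$ for any fixed $\rho<1$. You also cannot simply enlarge the domain: on $B_2$ one only has $v_k\le \|u_k\|_{L^\infty(B_1)}\big/\sup_{B_{r_k}(x_k)}u_k< (k\,r_k^\beta)^{-1}$, which is not uniformly bounded, so the a priori estimates do not apply on a larger ball.

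The missing ingredient is control of the supremum from one scale to the next — a doubling-type condition — and this is exactly what the paper's structure supplies. The paper first proves the fixed-scale flatness Lemma \ref{Flatness Estimate}, where the normalization $\|u_k\|_{L^\infty(B_1)}\le1$ is on the \emph{full} ball $B_1$ while the contradicted inequality $\sup_{B_{1/2}}u_k>\delta_0$ sits on the strictly interior ball $B_{1/2}$, so the compactness argument is safe; it then iterates dyadically via $\varphi_{j+1}(x)=2^{\beta}\varphi_j(x/2)$, the $2^{-\beta}$-decay delivered at step $j$ being exactly what renormalizes $\varphi_{j+1}$ to $\|\varphi_{j+1}\|_{L^\infty(B_1)}\le1$ for the next application of the lemma. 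An equivalent repair, used in the proofs of Theorems \ref{Thm-Aver-L2} and \ref{Hessian_continuity}, is to establish a discrete inequality $\mathfrak S_{j+1}\le\max\bigl\{\mathrm{C}\,2^{-\beta(j+1)},\,2^{-\beta}\mathfrak S_j\bigr\}$ for $\mathfrak S_j=\sup_{B_{2^{-j}}}u$; its negation forces the doubling relation $\mathfrak S_{j_k+1}>2^{-\beta}\mathfrak S_{j_k}$ in addition to the smallness of the coefficient, which gives a uniform bound $2^{\beta}$ on $B_1$ with supremum $1$ on $\overline B_{1/2}$ and so a genuine contradiction. Finally, a minor point: rescaling $u\mapsto u/\|u\|_{L^\infty(B_1)}$ alone changes $\mathfrak a$ by the factor $\|u\|_{L^\infty(B_1)}^{-(\gamma+1-m)}$, which is \emph{not} harmless when $\|u\|_{L^\infty(B_1)}$ is small; the paper avoids this by also dilating the spatial variable ($\varphi_1(x)=\kappa u(\tau x)$ with $\tau=r\kappa^{-1/\beta}$), so the Thiele modulus in the new equation is just $\mathfrak a(\tau x)$ with the original bound $\Lambda_0$, the $\kappa$-dependence being absorbed into the small coefficient $r^{2+\gamma}$.
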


As a consequence of our findings, we establish a more precise control for viscosity solutions of \eqref{Problem} near their free boundaries. This information is crucial for various quantitative aspects of numerous free boundary problems (see, e.g., \cite{ART17} and \cite{Teix18}). More precisely, we prove that, close to their free boundaries, solutions decay at a rate proportional to an appropriate power of \(\mathrm{dist}(\cdot, \partial\{u > 0\})\).

\begin{corollary}\label{Cor1.4}
Let \( u \) be a nonnegative, bounded viscosity solution to \eqref{Problem} in \( \Omega \). Given \( x_0 \in \{ u > 0 \} \cap \Omega' \) with \( \Omega^{\prime} \Subset \Omega \), then  
\[
u(x_0) \leq \mathrm{C}_{\sharp} \, \mathrm{dist} \left( x_0, \partial \{ u > 0 \} \right)^{\beta},
\]
where \( \mathrm{C}_{\sharp} > 0 \) is a universal constant.
\end{corollary}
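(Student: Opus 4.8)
The plan is to localize the estimate at a nearest free boundary point and then invoke Theorem \ref{Improved Regularity} after a harmless dilation of the space variable. Fix $x_0\in\{u>0\}\cap\Omega'$ and set $d:=\dist(x_0,\partial\{u>0\})$, $d_0:=\dist(\Omega',\partial\Omega)>0$, and $r:=\min\{1,\,d_0/2\}$. When $d$ is already comparable to the size of $\Omega'$, say $d\ge r/2$, there is nothing to prove: one simply writes $u(x_0)\le\|u\|_{L^\infty(\Omega)}\le\|u\|_{L^\infty(\Omega)}\,(2/r)^{\beta}\,d^{\beta}$, using $2d/r\ge1$ and $\beta>0$, and the desired inequality holds with an admissible constant. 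Thus the only genuine case is $d<r/2$.

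In that regime, since $x_0\in\Omega'$ we have $\dist(x_0,\partial\Omega)\ge d_0\ge 2r>2d$, so any point $y_0\in\partial\{u>0\}$ realizing the distance, $|x_0-y_0|=d$, lies well inside $\Omega$, with $\dist(y_0,\partial\Omega)\ge d_0-d>r$; hence $B_r(y_0)\Subset\Omega$. I would then rescale by setting $v(y):=u(y_0+ry)$ for $y\in B_1$. Since $\NDelta v(y)=r^{2}\,\NDelta u(y_0+ry)$ and $|\nabla(\cdot)|^{\gamma}$ picks up a factor $r^{\gamma}$, the function $v\in\mathrm{C}^0(B_1)$ is a non-negative bounded viscosity solution of
\[
|\nabla v(y)|^{\gamma}\,\NDelta v(y)=\tilde{\mathfrak{a}}(y)\,v_+^{m}(y)\quad\text{in }B_1,\qquad\tilde{\mathfrak{a}}(y):=r^{\gamma+2}\,\mathfrak{a}(y_0+ry),
\]
and because $0<r\le1$ and $\gamma+2>0$ one has $0\le\tilde{\mathfrak{a}}\le\sup_{\Omega}\mathfrak{a}$, so $v$ meets the hypotheses of Theorem \ref{Improved Regularity} (which uses only the upper bound on the modulating function). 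By construction the origin belongs to $\partial\{v>0\}\cap B_{1/2}$, while $z:=r^{-1}(x_0-y_0)\in\{v>0\}\cap B_{1/2}$, since $|z|=d/r<1/2$ and $v(z)=u(x_0)>0$. Applying Theorem \ref{Improved Regularity} to $v$ at the free boundary point $0$ and the interior point $z$ yields $v(z)\le\mathrm{C}_{\mathcal{G}}\,\|v\|_{L^\infty(B_1)}\,|z|^{\beta}\le\mathrm{C}_{\mathcal{G}}\,\|u\|_{L^\infty(\Omega)}\,(d/r)^{\beta}$, which, after undoing the dilation, reads $u(x_0)\le\mathrm{C}_{\mathcal{G}}\,\|u\|_{L^\infty(\Omega)}\,r^{-\beta}\,d^{\beta}$. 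Combining the two regimes gives the claim with $\mathrm{C}_{\sharp}:=\|u\|_{L^\infty(\Omega)}\max\{(2/r)^{\beta},\,\mathrm{C}_{\mathcal{G}}\,r^{-\beta}\}$.

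I do not anticipate a serious difficulty; the argument is essentially bookkeeping layered on top of Theorem \ref{Improved Regularity}. The three points deserving care are: (i) checking that the dilated equation is again of admissible type, i.e. that the rescaled Thiele modulus $\tilde{\mathfrak{a}}=r^{\gamma+2}\mathfrak{a}(y_0+r\,\cdot\,)$ still obeys the upper bound of hypothesis $\mathrm{A}\ref{Assumption_lambda}$ — which is exactly why the dilation factor is taken $\le1$; (ii) the fact that the estimate is purely interior, so one must split off the ``deep inside $\{u>0\}$'' regime and pay for it through $\dist(\Omega',\partial\Omega)$ and $\|u\|_{L^\infty(\Omega)}$, so that $\mathrm{C}_{\sharp}$ is ``universal'' only in the enlarged sense of depending also on these two quantities, a dependence that ought to be made explicit; and (iii) that a solution of \eqref{Problem} is strictly only a viscosity subsolution of the model equation $|\nabla v|^{\gamma}\NDelta v=\tilde{\mathfrak{a}}(y)v_+^{m}$, but the conclusion of Theorem \ref{Improved Regularity} being an upper bound on the solution, it is insensitive to this replacement.
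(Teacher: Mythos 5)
Your proof is correct and takes essentially the same route as the paper's: pick a nearest free boundary point $y_0$ and invoke Theorem \ref{Improved Regularity} on a ball around it. You are somewhat more careful than the paper --- which simply writes $u(x_0)\le\sup_{B_{d}(x_0)}u\le\sup_{B_{2d}(z_0)}u\le \mathrm{C}_\sharp d^\beta$ --- in that you make the dilation explicit, verify the rescaled Thiele modulus still obeys the upper bound, and dispatch the regime $d\gtrsim\mathrm{dist}(\Omega',\partial\Omega)$; these are details the paper glosses over (including the consequent dependence of $\mathrm{C}_\sharp$ on $\|u\|_{L^\infty}$ and $\mathrm{dist}(\Omega',\partial\Omega)$, which you rightly flag).
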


\begin{remark}
It is worth emphasizing that Theorem \ref{Improved Regularity} holds for any singular (\(\gamma \in (-1, 0)\)) or degenerate (\(\gamma > 0\)) behavior of the toy model \eqref{Problem} (cf. \cite{daSLR21} for related results). However, when \(\gamma < 0\), we may rewrite (via Lemma \ref{Lemma2.5}) our model PDE as follows:
\[
\Delta_p^{\mathrm{N}} u(x) = f_0(x, u, \nabla u) \lesssim \mathfrak{a}(x) u_+^m(x) |\nabla u(x)|^{-\gamma},
\]
which can be interpreted as a non-variational counterpart of quasi-linear models studied by Teixeira in \cite{Tei22}.

As motivation, problems with sub-linear gradient growth are often associated with a class of singular equations in the sense that they degenerate when the gradient vanishes. For instance, by \cite[Lemmas 5.2 and 5.3]{KoKo17}, a suitable viscosity solution of
\[
\mathcal{M}_{\lambda, \Lambda}^+(D^2 u) + f_0(x) |Du|^{\gamma_{\textrm{S}}} = 0, \quad \text{with } 0 < \gamma_{\textrm{S}} < 1 \text{ and } f_0 \in C^0(\Omega) \cap L^p(\Omega),
\]
is also a viscosity solution of
\[
|Du|^{-\gamma_{\textrm{S}}} \mathcal{M}_{\lambda, \Lambda}^+(D^2 u) + f_0(x) = 0
\]
(see \cite[Section 7.3]{daSNorb21} for details). Consequently, such problems can be viewed as a fully nonlinear counterpart of the singular \((\gamma_{\textrm{S}} + 1)\)-Laplace-type models, highlighting their relevance.

In conclusion, this transformation allows the singular case to be reformulated as the analysis of the normalized \(p\)-Laplacian problem incorporating a term with a ``sub-linear regime'' (from a homogeneity perspective).
\end{remark}
\medskip

In our second result, we rigorously determine the sharp asymptotic behavior with which non-negative viscosity solutions of \eqref{Problem} detach from their dead core regions. This information is essential for various free boundary problems (see \cite{KKPS2000}, \cite{Lee-Shahg2003}, and \cite{Teix16}) and plays a pivotal role in establishing several weak geometric properties.

\begin{theorem}[{\bf Non-degeneracy}]\label{Thm-Non-Deg}
Let us assume that $(\mathrm{A}0)-(\mathrm{A}2)$, and $\mathrm{A} {\ref{Assumption_lambda}}$ (lower bound) hold, and let $u\in \mathrm{C}^0(B_1)$ be a non-negative bounded viscosity solution to 
\[
|\nabla u(x)|^{\gamma}\NDelta u(x)=\mathfrak{a}(x)u^m_+(x) \quad \text{in} \quad B_1,
\]
and let $y\in \overline{\{u>0\}}\cap B_{1/2}$. Then, there exists a universal positive constant $\mathrm{C}_{\mathrm{ND}}$ such that for $r\in (0,1/2)$ hold
\[
\sup_{\overline{B}_r(y)}u\geq \mathrm{C}_{\mathrm{ND}}(n,\gamma, m, p, \lambda_0)\cdot r^{\beta},
\] 
where 
\begin{equation}\label{Const-Radial-Profile}
\mathrm{C}_{\mathrm{ND}}(\gamma, n, m, p, \lambda_0) = \frac{1}{\beta^{\frac{\gamma+1}{\gamma+1-m}}}\left[\frac{\lambda_0}{n-1+(p-1)(\beta-1)}\right]^{\frac{1}{\gamma+1-m}} > 0.\quad %\text{and} \quad\beta = \frac{\gamma+2}{\gamma+1-m}.
\end{equation}
\end{theorem}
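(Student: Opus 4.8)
The plan is to run a comparison argument against an explicit radial barrier, built so that the coefficient at its vertex is \emph{exactly} the constant $\mathrm{C}_{\mathrm{ND}}$ in \eqref{Const-Radial-Profile}; the absence of a minimum principle is harmless here because the whole argument takes place on the open positivity set $\{u>0\}$, where the operator is genuinely elliptic in the sense of \eqref{Pucci}. First I would reduce to the case $y\in\{u>0\}\cap B_{1/2}$. Indeed, if $y\in\partial\{u>0\}$, choose $y_j\in\{u>0\}$ with $y_j\to y$; for $j$ large $\overline{B_r(y_j)}\subset B_1$, and once the estimate holds at each $y_j$, continuity of $u$ together with $\overline{B_r(y_j)}\to\overline{B_r(y)}$ gives $\sup_{\overline{B_r(y)}}u=\lim_{j}\sup_{\overline{B_r(y_j)}}u\ge\mathrm{C}_{\mathrm{ND}}r^{\beta}$.

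Fix then $y\in\{u>0\}$ and set $\Theta(x)\defeq\mathrm{C}_{\mathrm{ND}}\,|x-y|^{\beta}$ with $\beta=\frac{\gamma+2}{\gamma+1-m}$. A direct computation in polar coordinates shows that, for $x\neq y$ with $\rho=|x-y|$,
\[
|\nabla\Theta(x)|^{\gamma}\,\NDelta\Theta(x)=\big(\mathrm{C}_{\mathrm{ND}}\beta\big)^{\gamma+1}\big[n-1+(p-1)(\beta-1)\big]\,\rho^{\,\gamma(\beta-1)+\beta-2},
\]
and the defining relation $\beta=\frac{\gamma+2}{\gamma+1-m}$ is precisely what forces $\gamma(\beta-1)+\beta-2=m\beta$; hence the right-hand side coincides with $\lambda_0\,\Theta^{m}(x)$ exactly when $\mathrm{C}_{\mathrm{ND}}^{\,\gamma+1-m}=\lambda_0\,\beta^{-(\gamma+1)}[n-1+(p-1)(\beta-1)]^{-1}$, i.e.\ for the value \eqref{Const-Radial-Profile}. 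So $\Theta$ is, away from its vertex, a classical solution — in particular a viscosity supersolution — of $|\nabla w|^{\gamma}\NDelta w=\lambda_0 w_+^{m}$ in $B_r(y)$. Conversely, since $\mathfrak{a}\ge\lambda_0$ by $\mathrm{A}\ref{Assumption_lambda}$ (lower bound), $u$ is a viscosity subsolution of $\mathcal{G}[w]\defeq|\nabla w|^{\gamma}\NDelta w-\lambda_0 w_+^{m}=0$ on $\{u>0\}$.

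The contradiction step: assume $\sup_{\overline{B_r(y)}}u<\mathrm{C}_{\mathrm{ND}}r^{\beta}$ and put $D\defeq B_r(y)\cap\{u>0\}$, a nonempty open set containing $y$. On $\partial D$ one has $u\le\Theta$: on $\partial B_r(y)$ because $u\le\sup_{\overline{B_r(y)}}u<\mathrm{C}_{\mathrm{ND}}r^{\beta}=\Theta$, and on $(\partial\{u>0\})\cap\overline{B_r(y)}$ because $u=0\le\Theta$ there. The comparison principle for $\mathcal{G}$ then yields $u\le\Theta$ in $D$, and evaluating at $y\in D$ gives $0<u(y)\le\Theta(y)=0$, which is absurd. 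Hence $\sup_{\overline{B_r(y)}}u\ge\mathrm{C}_{\mathrm{ND}}r^{\beta}$, and the reduction above upgrades this to all $y\in\overline{\{u>0\}}\cap B_{1/2}$ and $r\in(0,1/2)$.

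The hard part is the comparison invoked above. One needs a comparison principle for the singular/degenerate operator $w\mapsto|\nabla w|^{\gamma}\NDelta w$ together with the zeroth-order term; the term $-\lambda_0 w_+^{m}$ is non-increasing in $w$, hence ``proper'', which is what makes such a principle available, but some care is required at the vertex of $\Theta$, where $\nabla\Theta$ vanishes. This, however, is not a genuine obstruction: when $\gamma\le 0$ one has $\beta\ge 2$, so $\Theta\in\mathrm{C}^{2}$ near $y$ and the point is moot; and when $\gamma>0$ the factor $|\nabla\Theta|^{\gamma}$ vanishes at $y$, so running the comparison by doubling of variables the penalization keeps the two competing points distinct (a subsolution of $\mathcal{G}=0$ cannot be touched from above by a paraboloid having a critical point at an element of $\{u>0\}$), and the vertex is never tested directly. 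The remaining ingredients — the polar-coordinate computation, the boundary ordering on $\partial D$, and the limiting argument in the reduction — are routine.
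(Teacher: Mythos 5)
Your proposal is correct and follows essentially the same strategy as the paper: construct the explicit radial barrier $\mathrm{C}_{\mathrm{ND}}|x-y|^{\beta}$, verify via direct computation that it is a viscosity supersolution of the equation with coefficient $\lambda_0\le\mathfrak{a}$, and compare it with $u$ to reach a contradiction at the center. The only cosmetic difference is that you run the comparison on $D=B_r(y)\cap\{u>0\}$ while the paper runs it directly on $B_r(y)$ (which is equally admissible since Lemma~\ref{Comp-Princ} needs no sign condition on the domain), and the paper selects the barrier constant as a one-sided upper bound rather than forcing exact equality; your remarks about the vertex of the barrier correctly flag the same point the paper leaves implicit.
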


We now establish certain measure-theoretic properties of the phase transition. The following result demonstrates that the region \(\{u > 0\}\) maintains a uniform positive density along the free boundary. In particular, the formation of cusps at free boundary points is precluded.

\begin{corollary}[{\bf Uniform positive density}]\label{Corollary-UPD}
Assume that $\mathrm{A} {\ref{Assumption_lambda}}$ hold true, let $u\in \mathrm{C}^0(B_1)$ be a non-negative bounded viscosity solution to
\[
|\nabla u(x)|^{\gamma}\NDelta u(x)=\mathfrak{a}(x)u^m_+(x) \quad \text{in} \quad B_1,
\]
and $x_0\in\partial\{u>0\}\cap B_{1/2}$. Then, for all $r\in (0,1/2)$ holds 
\[
\mathscr{L}^n( B_r(x_0)\cap\{u>0\})\geq \theta r^n,
\]
where $\theta$ is a positive universal constant, and $\mathscr{L}^n(\mathrm{E})$ states the $n$-dimensional Lebesgue measure of set $\mathrm{E}$.
\end{corollary}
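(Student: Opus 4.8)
The plan is to deduce the uniform positive density of $\{u>0\}$ directly from the non-degeneracy Theorem~\ref{Thm-Non-Deg} together with the local $L^\infty$ estimate coming from the interior regularity theory (solutions of \eqref{Problem} are locally $\mathrm{C}^{1,\alpha}$, in particular locally bounded with a universal estimate). First I would fix $x_0 \in \partial\{u>0\}\cap B_{1/2}$ and a radius $r\in(0,1/2)$. By non-degeneracy applied at $x_0$ (which belongs to $\overline{\{u>0\}}$), for the sub-radius $r/2$ there exists a point $y \in \overline{B}_{r/2}(x_0)$ with $u(y) \geq \mathrm{C}_{\mathrm{ND}}\,(r/2)^{\beta}$. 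In particular $y \in \{u>0\}$, and $B_{r/2}(y)\subset B_r(x_0)$.

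Next I would estimate $u(y)$ from above using the $\beta$-growth control near the free boundary provided by Corollary~\ref{Cor1.4} (or directly Theorem~\ref{Improved Regularity}). Since $x_0\in\partial\{u>0\}$ and $|y-x_0|\le r/2$, we get $\mathrm{dist}(y,\partial\{u>0\}) \le r/2$, but to make the density argument work I actually want a lower bound on $\mathrm{dist}(y,\partial\{u>0\})$ in terms of $r$. Combining the two displays,
\[
\mathrm{C}_{\mathrm{ND}}\left(\frac{r}{2}\right)^{\beta} \leq u(y) \leq \mathrm{C}_{\sharp}\,\|u\|_{L^\infty(B_1)}\,\mathrm{dist}\!\left(y,\partial\{u>0\}\right)^{\beta},
\]
so that $\mathrm{dist}(y,\partial\{u>0\}) \geq \mathrm{c}_0\, r$ for a universal $\mathrm{c}_0\in(0,1)$ (depending on $\mathrm{C}_{\mathrm{ND}}$, $\mathrm{C}_{\sharp}$, $\|u\|_{L^\infty(B_1)}$, and $\beta$). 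Consequently the ball $B_{\mathrm{c}_0 r}(y)$ is entirely contained in $\{u>0\}$, and since $|y - x_0|\le r/2$ we may, after possibly shrinking $\mathrm{c}_0$, arrange $B_{\mathrm{c}_0 r}(y)\subset B_r(x_0)\cap\{u>0\}$. Taking Lebesgue measures yields
\[
\mathscr{L}^n\big(B_r(x_0)\cap\{u>0\}\big) \;\geq\; \mathscr{L}^n\big(B_{\mathrm{c}_0 r}(y)\big) \;=\; \omega_n (\mathrm{c}_0 r)^n \;=\; \theta\, r^n,
\]
with $\theta := \omega_n \mathrm{c}_0^n > 0$ universal, which is the claim.

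The main obstacle — though a mild one — is making the constant $\mathrm{c}_0$ genuinely universal: the upper bound from Corollary~\ref{Cor1.4} carries a factor $\|u\|_{L^\infty(B_1)}$, so the resulting $\theta$ would a priori depend on this norm. One resolves this either by a standard normalization/rescaling reduction to the case $\|u\|_{L^\infty(B_1)}\le 1$ (noting the equation and the exponent $\beta$ are stable under the scaling $u \mapsto \mu^{-1} u(\mu^{\,(\gamma+1-m)/(\gamma+2)}\,\cdot\,)$ that the problem admits, which is precisely the scaling behind \eqref{Radial-profile}), or by absorbing $\|u\|_{L^\infty(B_1)}$ into the list of admissible parameters if one is content with a non-degeneracy constant depending on it. A secondary, purely bookkeeping point is ensuring $B_{\mathrm{c}_0 r}(y)\subset B_r(x_0)$: since $|y-x_0|\le r/2$ this holds as soon as $\mathrm{c}_0 \le 1/2$, so we simply take the minimum of $1/2$ and the constant produced by the distance estimate. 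No further input is needed beyond Theorems~\ref{Improved Regularity}–\ref{Thm-Non-Deg} and their corollary.
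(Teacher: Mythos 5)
Your proposal is correct and follows essentially the same route as the paper: apply the non-degeneracy theorem to produce a point $y$ near $x_0$ where $u$ is large, then use the improved regularity (Theorem~\ref{Improved Regularity}/Corollary~\ref{Cor1.4}) to force $y$ to lie a universal fraction of $r$ away from the free boundary, giving a ball of comparable radius entirely inside $\{u>0\}\cap B_r(x_0)$. The paper takes $y_0\in\overline{B}_r(x_0)$ and argues by contradiction via $\partial\{u>0\}\cap B_{\tau r}(y_0)\neq\emptyset$, whereas you use the sub-radius $r/2$ and a direct distance lower bound — cosmetic variants of the same argument — and your explicit attention to the $\|u\|_{L^\infty}$ normalization in the constant is a fair point, which the paper handles implicitly via the scaling reduction announced at the start of Section~3.
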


An important result following from Theorem \ref{Thm-Non-Deg} is the refinement of the growth behavior near the free boundary. Specifically, given \( x_0 \in \{ u > 0 \} \cap \Omega \), it holds that
\[
\mathrm{dist}(x_0, \partial \{ u > 0 \})^{\beta} \lesssim u(x_0).
\]

\begin{corollary}\label{Corollary-Non-Deg}  Let \( u \) be a non-negative, bounded viscosity solution to \eqref{Problem} in \( \Omega \) and \( \Omega' \subset \Omega \).  
Given \( x_0 \in \{ u > 0 \} \cap \Omega \), there exists a universal constant \( \mathrm{C}_{\ast} > 0 \) such that  
\[
u(x_0) \geq \mathrm{C}_{\ast} \, \mathrm{dist}(x_0, \partial \{ u > 0 \})^{\beta}.
\]
\end{corollary}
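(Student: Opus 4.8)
I would argue by compactness, reducing the pointwise estimate to the already available supremum-type non-degeneracy of Theorem \ref{Thm-Non-Deg} together with the sharp growth bound of Corollary \ref{Cor1.4}. First, the case in which $d\defeq\dist(x_0,\partial\{u>0\})$ is bounded below by a fixed fraction of $\dist(x_0,\partial\Omega)$ is immediate: on the compact set where $\dist(\cdot,\partial\{u>0\})$ is bounded below, $u$ is continuous and strictly positive while $d$ is bounded above, so $u(x_0)\gtrsim d^{\beta}$ trivially. Hence I may assume $d$ is small. Suppose, for contradiction, that no universal $\mathrm{C}_{\ast}$ works in that regime: there are nonnegative bounded viscosity solutions $u_k$ of $|\nabla u|^{\gamma}\NDelta u=\mathfrak a_k u_+^m$ and interior points $x_k$ of $\{u_k>0\}$ with $B_{d_k}(x_k)\subset\{u_k>0\}$, $d_k\defeq\dist(x_k,\partial\{u_k>0\})\to 0$, and $u_k(x_k)\le \tfrac1k\,d_k^{\beta}$. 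Passing to the blow-ups
\[
v_k(y)\ \defeq\ \frac{u_k(x_k+d_k\,y)}{d_k^{\beta}},
\]
and using the arithmetic identity $\beta(\gamma+1-m)=\gamma+2$ from \eqref{beta-exponent}, each $v_k$ solves an equation of the same type, $|\nabla v_k|^{\gamma}\NDelta v_k=\tilde{\mathfrak a}_k(y)(v_k)_+^m$ on a ball $B_{R_k}$ with $R_k\ge 1$, where $\tilde{\mathfrak a}_k$ still obeys $\mathrm A\ref{Assumption_lambda}$ with the same constants; moreover $v_k\ge 0$, $v_k>0$ on $B_1$, $\partial\{v_k>0\}$ meets $\partial B_1$ but misses $B_1$, and $v_k(0)=u_k(x_k)/d_k^{\beta}\to 0$.

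By Corollary \ref{Cor1.4} applied to $u_k$ at points of $B_{2d_k}(x_k)$, where $\dist(\cdot,\partial\{u_k>0\})\le 3d_k$, one gets $v_k\le \mathrm C_{\sharp}\,3^{\beta}$ on $B_2$, and more generally a bound on each $\overline{B_R}\cap B_{R_k}$. Together with the interior regularity for this quasi-linear operator — available because $\NDelta$ is uniformly elliptic in the Pucci sense \eqref{Pucci}, cf.\ Attouchi--Ruosteenoja \cite{Attou18} — the family $\{v_k\}$ is locally equicontinuous, so after a subsequence $v_k\to v_\infty$ locally uniformly, and $v_\infty\ge 0$ is, by stability of viscosity solutions, a solution of the same equation with $v_\infty(0)=0$. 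Applying Theorem \ref{Thm-Non-Deg} to $v_k$ at the origin (which lies in $\overline{\{v_k>0\}}$) and letting $k\to\infty$ yields $\sup_{\overline{B_r(0)}}v_\infty\ge \mathrm C_{\mathrm{ND}}\,r^{\beta}>0$ for all small $r$; hence $v_\infty>0$ at points arbitrarily close to $0$, so $0\in\partial\{v_\infty>0\}$.

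On the other hand, the free boundaries are Hausdorff-stable on compact sets: since $\partial\{v_k>0\}$ misses $B_1$ for all $k$, no free boundary point of $v_\infty$ can lie in $B_1$, for otherwise the uniform non-degeneracy of Theorem \ref{Thm-Non-Deg} (which prevents $v_k$ from being positive but too small near such a point) and the uniform growth of Corollary \ref{Cor1.4} (which prevents the sets $\{v_k>0\}$ from collapsing) would force, in the standard way, a free boundary point of $v_k$ arbitrarily close to it for large $k$. Thus $\partial\{v_\infty>0\}\cap B_1=\emptyset$, contradicting $0\in\partial\{v_\infty>0\}$. This contradiction produces a universal $\mathrm C_{\ast}>0$ in the small-$d$ regime, which combined with the easy case completes the proof.

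The step I expect to be the main obstacle is the Hausdorff stability of the free boundaries — concretely, the assertion that a free boundary point of the limit $v_\infty$ is the limit of free boundary points of the $v_k$. This is precisely where the two one-sided $\varrho^{\beta}$ estimates of the paper must be combined quantitatively and at matching scales: non-degeneracy guarantees that neither $v_\infty$ nor any $v_k$ can be small-but-positive on a full ball abutting such a point, while the improved growth of Corollary \ref{Cor1.4} keeps the positivity sets from shrinking in the limit. The remaining ingredients — the scale invariance of the model, which works exactly for the sharp exponent $\beta$ of \eqref{beta-exponent}; the uniform $L^\infty$ and $\mathrm C^{1,\alpha}_{\mathrm{loc}}$ bounds for the blow-ups; and the stability of viscosity solutions under uniform limits — are routine given the results already established.
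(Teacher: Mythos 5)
Your step 6 --- the Hausdorff-stability claim that $\partial\{v_\infty>0\}\cap B_1=\emptyset$ --- is the gap, and your own step 5 already makes it visible: there you establish $0\in\partial\{v_\infty>0\}$, and $0\in B_1$. The two steps cannot coexist, and it is step 6 that is asserted without an actual argument. The ``standard'' combination you invoke does not close it. Theorem~\ref{Thm-Non-Deg} is a \emph{supremum-type} lower bound, $\sup_{\overline B_r(y_0)}v_k\geq \mathrm C_{\mathrm{ND}}\,r^\beta$, which is fully compatible with $v_k(y_0)\to 0$ at an interior point $y_0\in\{v_k>0\}$; and Corollary~\ref{Cor1.4} is a pointwise \emph{upper} bound, $v_k(y_0)\leq \mathrm C_\sharp\,\dist(y_0,\partial\{v_k>0\})^\beta$, which again allows $v_k(y_0)$ to be small even when the free boundary is far away. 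Neither gives the pointwise \emph{lower} bound $v_k(y_0)\gtrsim\dist(y_0,\partial\{v_k>0\})^\beta$ that your Hausdorff-stability step tacitly relies on --- but that pointwise lower bound is precisely the assertion of Corollary~\ref{Corollary-Non-Deg}, so the argument is circular. Concretely, nothing in the results you cite rules out a family $v_k>0$ on $B_1$ with $v_k(0)\to 0$ converging to a $v_\infty$ with a free boundary at the origin; ruling this out is the content of the statement.

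The paper takes a more direct route that sidesteps this entirely: after the same blow-up, it does \emph{not} pass to a limit $v_\infty$ and does \emph{not} appeal to stability of free boundaries. Instead it produces a quantitative smallness estimate for the $v_k$ themselves on a fixed ball (using the local regularity of viscosity solutions together with the hypothesis $u(x_k)\leq k^{-1}d_k^\beta$), and contradicts that smallness against the non-degeneracy bound $\sup_{B_{1/4}}v_k\geq \mathrm C_{\mathrm{ND}}(1/4)^\beta$ applied directly to $v_k$ at the origin. To repair your proof you would need an analogue of such a finite-$k$ smallness estimate at a fixed scale; the qualitative convergence $v_k\to v_\infty$ plus the two one-sided $\varrho^\beta$ estimates is not enough.
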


As a result of Theorems \ref{Improved Regularity} and \ref{Thm-Non-Deg}, we also establish the porosity of the zero-level set.

\begin{definition}[{\bf Porous set}] 
A set \( \mathcal{A} \subset \mathbb{R}^n \) is said \textit{porous} with porosity \( \delta_{\mathcal{A}} > 0 \) if there exists \( \mathrm{R} > 0 \) such that 
\[ 
\forall x \in \mathcal{A}, \, \forall\, r \in (0, \mathrm{R}), \, \exists\, y \in \mathbb{R}^n \, \text{such that} \; B_{\delta_{\mathcal{A}} r}(y) \subset B_r(x) \setminus \mathcal{A}.
\] 
\end{definition} 

We note that a porous set with porosity \( \delta_{\mathcal{A}} > 0 \) satisfies
$$
\mathscr{H}_{\mathrm{dim}}(\mathcal{A}) \leq n - \mathrm{c} \delta_{\mathcal{A}}^n,
$$
where \( \mathscr{H}_{\mathrm{dim}} \) states the Hausdorff dimension and \( \mathrm{c} = \mathrm{c}(n) > 0 \) is a dimensional constant. Particularly, a porous set has Lebesgue measure zero (see, for example, \cite{KR97} and \cite{Zaj87}).

\begin{corollary}[{\bf Porosity of the free boundary}]\label{Corollary-Porosity}
There exists a universal constant $\tau\in(0,1]$ such that
\[
\mathscr{H}^{n-\tau}\left(\partial\{u>0\}\cap B_{1/2}\right)<+\infty.
\]
\end{corollary}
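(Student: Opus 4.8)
The plan is to show that $\mathscr{F}_0 = \partial\{u>0\}\cap B_{1/2}$ is a porous set with a universal porosity constant $\delta_\ast\in(0,1)$, and then invoke the Hausdorff-dimension estimate for porous sets recorded just above the statement (with $\tau = \mathrm{c}(n)\,\delta_\ast^n$) to conclude $\mathscr{H}^{n-\tau}(\mathscr{F}_0)<+\infty$; in fact one gets finiteness of $\mathscr{H}^s$ for every $s>n-\tau$, and choosing $s$ appropriately yields the claim. So the whole matter reduces to producing, for each $x\in\mathscr{F}_0$ and each small $r>0$, a ball $B_{\delta_\ast r}(y)\subset B_r(x)\setminus\mathscr{F}_0$ with $\delta_\ast$ universal.

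The key steps, in order. First, fix $x_0\in\mathscr{F}_0$ and $r\in(0,r_0)$ for a suitable universal $r_0$ (e.g. so that $B_r(x_0)\Subset B_{1}$ after the usual scaling/covering reduction that lets us work in $B_1$). Consider the point on the sphere $\partial B_{r/2}(x_0)$ — or rather, pick $z\in \overline{B_{r/4}(x_0)}$ realizing $\sup_{\overline{B_{r/4}(x_0)}}u$. By non-degeneracy (Theorem \ref{Thm-Non-Deg}, applied at $x_0\in\overline{\{u>0\}}$) we have $u(z)\ge \mathrm{C}_{\mathrm{ND}}(r/4)^{\beta}>0$, so $z\in\{u>0\}$ and moreover $z$ sits a definite distance into the positivity set. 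Second, estimate $d := \mathrm{dist}(z,\mathscr{F}_0)$ from below: since $u(z)>0$, Corollary \ref{Cor1.4} (equivalently the growth bound $u(z)\le \mathrm{C}_\sharp\,\mathrm{dist}(z,\partial\{u>0\})^\beta$) forces
\[
\mathrm{C}_{\mathrm{ND}}\Big(\tfrac{r}{4}\Big)^{\beta}\le u(z)\le \mathrm{C}_{\sharp}\, d^{\beta},
\qquad\text{hence}\qquad
d \ \ge\ \Big(\tfrac{\mathrm{C}_{\mathrm{ND}}}{\mathrm{C}_{\sharp}}\Big)^{1/\beta}\cdot \tfrac{r}{4}\ =:\ \mu\, r,
\]
with $\mu\in(0,1)$ universal. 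Third, the ball $B_{\mu r}(z)$ is contained in $\{u>0\}$, hence disjoint from $\mathscr{F}_0$; and since $|z-x_0|\le r/4$, shrinking if needed we get $B_{\delta_\ast r}(z)\subset B_r(x_0)\setminus\mathscr{F}_0$ with $\delta_\ast := \min\{\mu,\tfrac14\}$ (one checks $B_{\mu r}(z)\subset B_r(x_0)$ when $\mu\le \tfrac34$, which we may assume). This exhibits the required porosity with universal constant $\delta_\ast$, and the measure conclusion follows.

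The main obstacle is not conceptual but a matter of bookkeeping: one must be careful that the non-degeneracy constant $\mathrm{C}_{\mathrm{ND}}$ and the growth constant $\mathrm{C}_\sharp$ are genuinely comparable on the \emph{same} scale of balls — i.e. that Theorems \ref{Improved Regularity}/\ref{Thm-Non-Deg} and their corollaries are applied after a harmless rescaling $u_r(x) = r^{-\beta}u(x_0+rx)$, which preserves the class of equations \eqref{Problem} (here the sub-homogeneity $m<\gamma+1$ is exactly what makes the rescaled right-hand side stay bounded by $\mathfrak{a}_r(x)(u_r)_+^m$ with the same structural constants), so that $\mu$ comes out independent of $r$. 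A secondary technical point is localizing to $B_{1/2}$: one covers $\mathscr{F}_0\cap \overline{B_{1/2}}$ by finitely many balls on which the above applies and uses countable stability of having finite $\mathscr{H}^{n-\tau}$ measure. Once the scaling invariance is pinned down, the argument is immediate.
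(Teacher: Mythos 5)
Your argument is correct and is essentially the same as the paper's: both proofs apply Non-degeneracy (Theorem~\ref{Thm-Non-Deg}) to produce a point near $x_0$ at which $u \gtrsim r^\beta$, and then use the improved-regularity growth bound (you via Corollary~\ref{Cor1.4}, the paper via the contradiction step in Corollary~\ref{Corollary-UPD} with the constant $\tau$ from~\eqref{tau_constant}) to conclude that this point sits a proportional distance $\gtrsim r$ from $\partial\{u>0\}$, which yields porosity and hence the Hausdorff-measure bound by~\cite{Zaj87}.

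Two small remarks. Your version is a bit tidier: by applying Non-degeneracy in the shrunken ball $\overline{B}_{r/4}(x_0)$ you build in enough slack that the resulting ball $B_{\delta_\ast r}(z)$ automatically fits inside $B_r(x_0)$, which lets you dispense with the intermediate point $\overline{y}$ on the segment $[x_0,y_0]$ that the paper introduces to keep the ball inside $B_r(x_0)$. Second, the closing caveat about rescaling is unnecessary here: the constants $\mathrm{C}_{\mathrm{ND}}$ and $\mathrm{C}_\sharp$ in Theorem~\ref{Thm-Non-Deg} and Corollary~\ref{Cor1.4} are already stated as universal, i.e.\ independent of $r$ and of the base point, so $\mu=(\mathrm{C}_{\mathrm{ND}}/\mathrm{C}_\sharp)^{1/\beta}/4$ is automatically a single universal porosity constant and no further rescaling argument is required.
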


We also establish a precise and rigorous characterization of solutions to \eqref{Problem} when $m = \gamma + 1$. It is worth noting that, in this case, the previously derived regularity estimates deteriorate. Consequently, investigating this critical regime constitutes a subtle and challenging endeavor.

\begin{theorem}[{\bf Strong Maximum Principle}]\label{Thm-SMP}
Let $u$ be a nonnegative, bounded viscosity solution to \eqref{Problem} with $m = \gamma + 1$. Then, the following dichotomy holds: either $u > 0$ or $u \equiv 0$ in $\Omega$.
\end{theorem}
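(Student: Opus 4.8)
The plan is to establish the dichotomy by a contradiction argument that leverages the critical homogeneity $m = \gamma+1$, for which the absorption term scales exactly like the operator. Suppose $u \not\equiv 0$ and yet $u$ vanishes somewhere in $\Omega$; then there is a point $x_0 \in \partial\{u>0\}\cap\Omega$ (the free boundary is nonempty). First I would localize near $x_0$: pick a ball $B_{2\rho}(x_0)\Subset\Omega$ on which $\|u\|_{L^\infty}$ is controlled and reduce the problem, via the differential inequality in \eqref{Problem} together with $\mathrm{A}{\ref{Assumption_lambda}}$, to the sub-equation
\[
|\nabla u|^{\gamma}\,\NDelta u \le \Lambda_0\, u_+^{\gamma+1} \quad \text{in } B_{2\rho}(x_0).
\]
The key point is that in the critical case the natural comparison function is no longer an algebraic power (which produced the finite value $\beta$ when $m<\gamma+1$) but an \emph{exponential-type barrier}: the ODE analysis for a radial profile $\omega(r)$ satisfying $|\omega'|^{\gamma}[\,(p-1)\omega'' + \tfrac{n-1}{r}\omega'\,] = \Lambda_0\,\omega^{\gamma+1}$ has, because $m = \gamma+1$ makes the equation invariant under $\omega \mapsto c\,\omega$, solutions with at-most-exponential decay rather than solutions that touch zero in finite distance. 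Concretely I would build a smooth positive radial supersolution on an annulus $B_R\setminus B_{r}(x_0)$ of the form $\omega(x) = \varepsilon\, e^{-A(|x-x_0|-r)}$ (or $\varepsilon(|x-x_0|-r)^{-k}$-type near the inner sphere if convenient), choosing $A$ large in terms of $n,p,\gamma,\Lambda_0,\rho$ so that $|\nabla\omega|^\gamma\NDelta\omega \ge \Lambda_0\,\omega^{\gamma+1}$; note this is exactly where we need $\NDelta$ to behave like a uniformly elliptic operator between the Pucci bounds \eqref{Pucci}, so that one only has to beat $\mathcal M^-_{\lambda,\Lambda}$.

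Next I would run a Hopf-type / comparison step. On a suitable annulus with inner boundary a sphere that is internally tangent to $\{u>0\}$ at $x_0$, the solution $u$ is a nonnegative viscosity supersolution of the linearized inequality $|\nabla u|^\gamma\NDelta u \le \Lambda_0 u_+^{\gamma+1}$, $u$ is strictly positive somewhere on the outer sphere (interior point of $\{u>0\}$), and $u(x_0)=0$. Comparing $u$ from below with the exponential barrier $\omega$ on the annulus — using the comparison principle for \eqref{Problem} available in the viscosity framework (the structural conditions on $|\nabla u|^\gamma\NDelta u$ and on $u\mapsto \mathfrak a(x)u_+^{m}$ being nondecreasing in $u$ for $m\ge 0$ guarantee it) — forces $u \ge \omega > 0$ in the open annulus, contradicting $u(x_0)=0$ via the strict positivity of $\omega$ up to the inner sphere. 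Equivalently, one obtains a quantitative lower bound $u(z) \ge c\,e^{-A\,\mathrm{dist}(z,\partial\{u>0\})}$ near the free boundary, which is incompatible with $u$ being continuous and vanishing on $\partial\{u>0\}$ unless $\{u>0\}$ is all of $\Omega$.

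The main obstacle I anticipate is the degenerate/singular factor $|\nabla u|^{\gamma}$: at a free boundary point one expects $\nabla u$ to vanish, so the equation is not uniformly elliptic there and the classical Hopf lemma does not apply verbatim. I would handle this in the viscosity sense, replacing the pointwise Hopf argument by the comparison principle against the explicit barrier (which never has vanishing gradient on the open annulus, so the $|\nabla\omega|^\gamma$ factor is harmless for $\omega$), and being careful that the differential inequality $|\nabla u|^\gamma\NDelta u \le \Lambda_0 u^{\gamma+1}$ is the correct one to impose on $u$ as a viscosity supersolution — here the reduction in the Remark following Theorem \ref{Improved Regularity} (rewriting the singular case $\gamma<0$ as $\NDelta u = f_0(x,u,\nabla u)$ with a sublinear-in-gradient right-hand side) and the uniform ellipticity between the Pucci operators are exactly the tools that make the comparison legitimate across $\gamma \gtrless 0$. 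A secondary technical point is producing a genuinely classical (hence admissible test) supersolution $\omega$; this is a one-dimensional ODE verification and I would only sketch the choice of constants, not grind through it.
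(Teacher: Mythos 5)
Your first step --- an exponential barrier on an annulus tangent to the free boundary, compared against $u$ from below --- is exactly the device the paper uses (the paper's barrier is $\Phi_a(x)=e^{-a|x|^2}-e^{-ad^2}$ on $B_d\setminus B_{d/2}$, with $d=\mathrm{dist}(0,\{u=0\})$, and the critical homogeneity $m=\gamma+1$ is indeed what makes $\mathcal L^{1+\gamma}[\theta\Phi_a]=\theta^{1+\gamma}\mathcal L^{1+\gamma}[\Phi_a]$ so that the multiplicative constant $\theta$ comes for free). But the conclusion you draw from the comparison is not available, and that is where the argument has a genuine gap. For the boundary comparison $u\ge\theta\omega$ on $\partial(\text{annulus})$ to hold, the barrier must vanish on the outer sphere where it meets the free boundary point $y_0$ (since $u(y_0)=0$); a barrier that is strictly positive up to and including $y_0$, such as your $\varepsilon e^{-A(|x-x_0|-r)}$, cannot be dominated by $u$ there, so the comparison hypothesis fails. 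With the correctly normalized barrier (one that vanishes at $y_0$, as in the paper), the comparison yields, exactly as in the classical Hopf lemma, a \emph{linear} lower bound $u\gtrsim\theta\beta_0\,\mathrm{dist}(\cdot,y_0)$ near $y_0$ --- not a constant lower bound, and in particular not your claimed $u(z)\ge c\,e^{-A\,\mathrm{dist}(z,\partial\{u>0\})}$, which would force $u$ to be bounded below by $c>0$ at the free boundary and is therefore false. A linear lower bound alone does not contradict $u(y_0)=0$.

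What closes the paper's argument is a second, decisive ingredient that is missing from your sketch: since $u$ is bounded, the critical term can be regrouped as $\mathfrak a(x)u_+^{1+\gamma}=\overline{\mathfrak a}(x)\,u_+^{m}$ with $m=\gamma<\gamma+1$ and the new Thiele modulus $\overline{\mathfrak a}(x)=\mathfrak a(x)u(x)$ still bounded. Remark~\ref{remark of improved regularity} following Theorem~\ref{Improved Regularity} then applies and gives the \emph{superlinear} upper bound $\sup_{B_r(y_0)}u\le \mathrm C\,r^{2+\gamma}$, and since $2+\gamma>1$ for $\gamma>-1$, this collides with the linear lower bound from the Hopf barrier once $r$ is small, yielding the contradiction. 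Without this regularity input (or some substitute for it), the barrier comparison by itself cannot produce the dichotomy.

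One minor terminological slip worth flagging: the radial profile you construct should be a \emph{subsolution} of $\mathcal L^{1+\gamma}[\cdot]=|\nabla\cdot|^\gamma\NDelta\cdot-\Lambda_0(\cdot)_+^{\gamma+1}=0$, to be compared against the supersolution $u$ from below; you call it a supersolution, which would run the comparison the wrong way.
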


Next, we obtain the sharp (and improved) rate, in which the gradient of solutions decays at interior free boundary points.

\begin{theorem}[{\bf Sharp gradient decay}]\label{Thm-Sharp-Grad}
Let $u$ be a bounded, nonnegative viscosity solution to \eqref{Problem} (with $\gamma>0$). Then, for any point $z \in \partial\{u > 0\} \cap \Omega^{\prime}$ with $\Omega^{\prime} \Subset \Omega$, there exists a universal constant $\mathrm{C}^{^{\prime}}_0 > 0$ such that  
\[
\sup_{B_r(z)} |\nabla u(x)| \leq \mathrm{C}_0^{^{\prime}} r^{\frac{1+m}{\gamma+1-m}} \quad \text{for all} \quad  
0 < r \ll \min \left\{ 1, \frac{\mathrm{dist}(\Omega^{^{\prime}}, \partial\Omega)}{2} \right\}.
\]
\end{theorem}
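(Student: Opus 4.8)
The plan is to combine the improved growth estimate from Theorem~\ref{Improved Regularity} with the interior gradient estimates available for the normalized $p$-Laplacian with right-hand side (in the spirit of Attouchi--Ruosteenoja \cite{Attou18}), applied after a parabolic-type rescaling adapted to the exponent $\beta$. Fix $z \in \partial\{u>0\}\cap\Omega'$ and let $r>0$ be small. First I would invoke Corollary~\ref{Cor1.4} (equivalently Theorem~\ref{Improved Regularity} combined with the distance-function formulation) to obtain, for every $x\in B_r(z)$,
\[
0\le u(x)\le \mathrm{C}_\sharp\,\mathrm{dist}(x,\partial\{u>0\})^{\beta}\le \mathrm{C}_\sharp\,(2r)^{\beta},
\]
so that $\displaystyle \sup_{B_{2r}(z)}u \le \mathrm{C}\, r^{\beta}$ with $\mathrm{C}$ universal. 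This controls the oscillation of $u$ on the ball $B_{2r}(z)$ at scale $r^{\beta}$, which is the crucial input.

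Next I would rescale: set
\[
v(y) \;:=\; \frac{u(z+ry)}{r^{\beta}}, \qquad y\in B_2.
\]
A direct computation using the $(-\gamma)$-homogeneity of $|\nabla\cdot|^\gamma$ and the $1$-homogeneity of $\Delta_p^{\mathrm N}$ in $D^2(\cdot)$ shows that $v$ solves
\[
|\nabla v(y)|^{\gamma}\,\Delta_p^{\mathrm N}v(y) \;=\; r^{\beta(\gamma+1-m)-(\gamma+2)}\,\mathfrak{a}(z+ry)\,v_+^{m}(y)
\;=\; \mathfrak{a}(z+ry)\,v_+^m(y)\quad\text{in }B_2,
\]
because the exponent $\beta$ in \eqref{beta-exponent} is precisely chosen so that $\beta(\gamma+1-m)=\gamma+2$, i.e. the equation is scale-invariant under this rescaling. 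Moreover, by the previous step, $0\le v\le \mathrm{C}$ in $B_2$ with $\mathrm{C}$ universal, and the right-hand side $\mathfrak{a}(z+ry)v_+^m$ is uniformly bounded (by $\Lambda_0 \mathrm{C}^m$) and continuous. Hence $v$ is a bounded viscosity solution of an equation of exactly the same type, with universally controlled data, on $B_2$.

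Now I would apply the interior $\mathrm{C}^{1,\alpha}$ (or at least interior Lipschitz) regularity estimate for this class of equations—available since $\gamma>0$ places us in the degenerate regime covered by Attouchi--Ruosteenoja \cite{Attou18}—to conclude
\[
\sup_{B_1}|\nabla v| \;\le\; \mathrm{C}'\Bigl(\|v\|_{L^\infty(B_2)} + \|\mathfrak{a}v_+^m\|_{L^\infty(B_2)}^{\frac{1}{\gamma+1}}\Bigr)\;\le\; \mathrm{C}_0',
\]
a universal constant. Scaling back via $\nabla u(z+ry) = r^{\beta-1}\nabla v(y)$ gives
\[
\sup_{B_r(z)}|\nabla u| \;\le\; \mathrm{C}_0'\, r^{\beta-1} \;=\; \mathrm{C}_0'\, r^{\frac{\gamma+2}{\gamma+1-m}-1} \;=\; \mathrm{C}_0'\, r^{\frac{1+m}{\gamma+1-m}},
\]
which is the asserted estimate, valid for $0<r\ll\min\{1,\mathrm{dist}(\Omega',\partial\Omega)/2\}$ so that $B_{2r}(z)\Subset\Omega$.

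The main obstacle I anticipate is justifying the interior gradient bound \emph{uniformly in the rescaling parameter $r$}: one must ensure that the gradient estimate for $|\nabla v|^{\gamma}\Delta_p^{\mathrm N}v = h(y)$ depends only on $\|v\|_{L^\infty}$, $\|h\|_{L^\infty}$, $n$, $p$, $\gamma$ and not on finer features of $v$ that could degenerate as $r\to0$ (for instance near the free boundary of $v$, where $\nabla v$ itself vanishes). This is exactly where the degeneracy/singularity of the operator when $\nabla u = 0$ must be handled carefully; the resolution is that the cited regularity theory for the normalized $p$-Laplacian already produces such scale-invariant estimates, and the strong-absorption term, being a bounded continuous function of $v$ with $m\ge 0$, does not interfere. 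A secondary technical point is the case $m=0$, where $v_+^m$ should be read as $\chi_{\{v>0\}}$; here the right-hand side is merely bounded (not continuous), but it still lies in $L^\infty$, so the gradient estimate continues to apply.
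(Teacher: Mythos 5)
Your proof is correct and follows essentially the same route as the paper: rescale by $v(y)=u(z+ry)/r^{\beta}$, note the scale-invariance of the equation, bound $\|v\|_{L^\infty}$ uniformly via Theorem~\ref{Improved Regularity}, and then apply the interior gradient estimate of Theorem~\ref{Thm-grad-est} before scaling back by $\nabla u(z+ry)=r^{\beta-1}\nabla v(y)$. Your bookkeeping of the exponent is in fact cleaner than the paper's displayed chain, which contains a typo ($1/r^{\beta}$ where $1/r^{\beta-1}$ is meant), though both arrive at the same final estimate.
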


As a consequence, we derive the following estimate in terms of the distance up to the free boundary (see also \cite[Corollary 5.1]{daSS18}. The proof follows a similar reasoning to the one addressed in Corollary \ref{Cor1.4}.

\begin{corollary}
Let \( u \) be a bounded, non-negative viscosity solution to \eqref{Problem} in \( B_1 \) (with $\gamma>0$). Then, for any point \( z \in \{ u > 0 \} \cap B_{1/2} \), there exists a universal constant \( \mathrm{C}_{\star} > 0 \) such that  
\[
|\nabla u(z)| \leq \mathrm{C}_{\star} \, \mathrm{dist} (z, \partial \{ u > 0 \})^{\frac{1+m}{\gamma+1-m}}.
\]
\end{corollary}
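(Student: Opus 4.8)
The plan is to reduce the pointwise gradient bound to the already-established growth estimate for $u$ itself, exactly in the spirit of the proof of Corollary~\ref{Cor1.4}, but now using Theorem~\ref{Thm-Sharp-Grad} as the input. Fix $z \in \{u>0\}\cap B_{1/2}$ and set $d \defeq \mathrm{dist}(z,\partial\{u>0\})$; we may assume $d$ is small (otherwise the estimate is trivial after adjusting the universal constant, since $|\nabla u|$ is locally bounded by the $\mathrm{C}^{1,\kappa}_{\loc}$ theory of Attouchi--Ruosteenoja). Pick a free boundary point $z_0 \in \partial\{u>0\}$ realizing the distance, so $|z-z_0| = d$ and $B_d(z) \subset \{u>0\}$. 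The idea is to apply Theorem~\ref{Thm-Sharp-Grad} at the point $z_0$ with radius $r = 2d$ (or some fixed multiple of $d$ chosen so that $z \in B_r(z_0)$), which yields
\[
|\nabla u(z)| \;\leq\; \sup_{B_{2d}(z_0)}|\nabla u| \;\leq\; \mathrm{C}_0^{\prime}\,(2d)^{\frac{1+m}{\gamma+1-m}} \;=\; \mathrm{C}_\star\, d^{\frac{1+m}{\gamma+1-m}},
\]
with $\mathrm{C}_\star \defeq 2^{\frac{1+m}{\gamma+1-m}}\mathrm{C}_0^{\prime}$, which is the claimed inequality. One only has to check that $2d$ is admissible in Theorem~\ref{Thm-Sharp-Grad}, i.e.\ that $2d \ll \min\{1,\tfrac12\mathrm{dist}(\Omega',\partial\Omega)\}$ with $\Omega' = B_{1/2}$ and $\Omega = B_1$; this holds whenever $d$ is smaller than a universal threshold, and for larger $d$ we fall back on the local gradient bound as above.

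The only genuine issue is bookkeeping about where Theorem~\ref{Thm-Sharp-Grad} is allowed to be applied: it is stated for $z \in \partial\{u>0\}\cap\Omega'$ with $\Omega' \Subset \Omega$, so one should first fix an intermediate domain, say apply it with $\Omega' = B_{3/4}$ inside $\Omega = B_1$, ensuring $z_0 \in B_{3/4}$ (true since $z_0$ is within distance $d \ll 1$ of $z \in B_{1/2}$) and that $B_{2d}(z_0) \Subset B_1$. All constants produced this way depend only on $n, p, \gamma, m$ and the bounds on $\mathfrak{a}$, hence are universal. I would also remark that the hypothesis $\gamma > 0$ is inherited directly from Theorem~\ref{Thm-Sharp-Grad} and is needed there (not here), so no extra assumption is introduced.

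I expect no substantial obstacle: the entire content is the covering/scaling argument that converts a ``sup over a ball centered at the free boundary'' estimate into a ``pointwise value controlled by distance to the free boundary'' estimate, which is a standard and short manipulation. If one wanted to be slightly more careful, the only place demanding attention is the transition between the ``$d$ small'' regime (where Theorem~\ref{Thm-Sharp-Grad} applies) and the ``$d$ comparable to $1$'' regime (where one merely uses that $\|\nabla u\|_{L^\infty(B_{3/4})}$ is controlled by a universal constant times $\|u\|_{L^\infty(B_1)}$ plus structural data, via the interior $\mathrm{C}^{1,\kappa}$ estimate); absorbing both into a single universal $\mathrm{C}_\star$ is routine.
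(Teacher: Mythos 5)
Your proposal is correct and takes exactly the route the paper intends: the authors explicitly say this corollary ``follows a similar reasoning to the one addressed in Corollary~\ref{Cor1.4},'' which is precisely the chain $|\nabla u(z)| \le \sup_{B_d(z)}|\nabla u| \le \sup_{B_{2d}(z_0)}|\nabla u| \le \mathrm{C}_0^{\prime}(2d)^{\frac{1+m}{\gamma+1-m}}$ with $z_0$ a nearest free boundary point and Theorem~\ref{Thm-Sharp-Grad} supplying the last inequality. Your additional bookkeeping about choosing an intermediate domain $\Omega'$ and handling the regime where $d$ is not small (via the interior $\mathrm{C}^{1,\alpha}$ estimate of Theorem~\ref{Thm-grad-est}) is sound and only makes explicit details the paper leaves implicit.
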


The methodology adopted in our article is sufficiently flexible to generate several other noteworthy applications related to regularity at free boundary points. Among these, we establish a novel estimate in the \( L^2 \)-average sense for ``\( (\gamma+1) \)-dead core solutions.''. More specifically, we have the following estimate:

\begin{theorem}[{\bf Estimate in \( L^2 \)-average}]\label{Thm-Aver-L2} For every normalized viscosity solution \( u \in \mathrm{C}^0(B_1) \) of \eqref{Problem} with \( \gamma > 0 \) and \( x_0 \in \partial\{u > 0\} \cap B_{1/2} \),  
there exists \( \mathrm{M}_0 = \mathrm{M}_0(n, p, m, \gamma, \lambda_0, \Lambda_0) \) such that  
\[
\left(\intav{B_r(x_0)} (|\nabla u(x)|^{\gamma} |D^2 u(x)|)^2 \, dx \right)^{\frac{1}{2}}\leq \mathrm{M}_0 r^{\frac{\gamma m}{\gamma+1-m}}.
\]
\end{theorem}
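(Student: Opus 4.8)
The plan is to combine the pointwise sharp regularity at the free boundary (Theorem \ref{Improved Regularity}) with an interior Calderón–Zygmund-type estimate for the normalized $p$-Laplacian, applied on a carefully chosen scale. First I would normalize: assume $\|u\|_{L^\infty(B_1)} \le 1$, fix $x_0 \in \partial\{u>0\}\cap B_{1/2}$, and work on a ball $B_r(x_0)$ with $r$ small. The key structural observation is that, by Theorem \ref{Improved Regularity}, $\sup_{B_{2r}(x_0)} u \lesssim r^{\beta}$, and by the sharp gradient decay (Theorem \ref{Thm-Sharp-Grad}) $\sup_{B_{2r}(x_0)}|\nabla u| \lesssim r^{\beta-1} = r^{\frac{1+m}{\gamma+1-m}}$. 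These two facts say that $u$ on the scale $B_r(x_0)$, after the rescaling $u_r(y) := r^{-\beta} u(x_0 + r y)$, is a bounded viscosity solution of a rescaled equation of the same structural type on $B_1$, with $\|u_r\|_{L^\infty(B_1)} \le C$ universal. Indeed, one checks that $u_r$ solves $|\nabla u_r|^{\gamma}\NDelta u_r = \mathfrak{a}_r(y)(u_r)_+^m$ with $\mathfrak{a}_r(y) = r^{\beta(\gamma+1-m) - (\gamma+2)}\mathfrak{a}(x_0+ry) = \mathfrak{a}(x_0+ry)$ by the very choice of $\beta$, so the rescaled right-hand side is again bounded between $\lambda_0$ and $\Lambda_0$ times $(u_r)_+^m$, hence uniformly bounded.

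Next I would invoke the interior $W^{2,2}$ (or $C^{1,\alpha}$-with-$L^2$-Hessian) regularity theory available for the normalized $p$-Laplacian with bounded right-hand side — the Cordes-type / Attouchi–Ruosteenoja estimates cited in the paper (\cite{Attou18}) guarantee, for $\gamma \ge 0$, that a normalized solution $u_r$ of $|\nabla u_r|^\gamma \NDelta u_r = h_r$ with $\|h_r\|_{L^\infty(B_1)} \le C$ and $\|u_r\|_{L^\infty(B_1)}\le C$ satisfies
\[
\left(\intav{B_{1/2}} \left(|\nabla u_r(y)|^{\gamma}|D^2 u_r(y)|\right)^2 dy\right)^{1/2} \le \mathrm{M}_0,
\]
with $\mathrm{M}_0$ universal. (Here $h_r = \mathfrak{a}_r (u_r)_+^m$ is bounded by $\Lambda_0 C^m$.) The final step is to scale back. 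Under $u_r(y) = r^{-\beta}u(x_0+ry)$ one has $\nabla u_r(y) = r^{1-\beta}\nabla u(x_0+ry)$ and $D^2 u_r(y) = r^{2-\beta}D^2u(x_0+ry)$, so $|\nabla u_r|^\gamma |D^2 u_r| = r^{\gamma(1-\beta)+2-\beta}\,|\nabla u|^\gamma|D^2u|$ at the corresponding point. The exponent is $\gamma(1-\beta)+2-\beta = \gamma+2 - \beta(\gamma+1)$, and plugging $\beta = \frac{\gamma+2}{\gamma+1-m}$ gives $\gamma+2 - \frac{(\gamma+2)(\gamma+1)}{\gamma+1-m} = (\gamma+2)\cdot\frac{-m}{\gamma+1-m} = -\frac{m(\gamma+2)}{\gamma+1-m}$. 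Since $\beta(\gamma+1-m) = \gamma+2$, a change of variables in the average integral produces the factor $r^{2\cdot(-m(\gamma+2)/(\gamma+1-m))}$ inside, and then $r^{-m(\gamma+2)/(\gamma+1-m)}$ after the square root; combined with the dimensional scaling $\intav{B_r} = \intav{B_1}$ (averages are scale-invariant) the remaining power is $r^{\frac{(\gamma+2)m}{\gamma+1-m} - \text{(correction)}}$; a direct bookkeeping check must yield exactly $r^{\frac{\gamma m}{\gamma+1-m}}$ as claimed — this amounts to verifying $-\frac{m(\gamma+2)}{\gamma+1-m} + \frac{2m}{\gamma+1-m}\cdot(\text{from } |\nabla u_r|\text{ rescaling being absorbed}) $, so I would carry out this elementary exponent computation carefully at the end.

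The main obstacle I anticipate is \emph{not} the scaling algebra but ensuring the $L^2$-Hessian estimate for $|\nabla u|^\gamma |D^2 u|$ is genuinely available with a \emph{universal} constant for the degenerate normalized $p$-Laplacian with merely bounded right-hand side, uniformly as the free boundary is approached — i.e., that the estimate does not degenerate where $\nabla u$ vanishes. The point is that the weight $|\nabla u|^\gamma$ is exactly the one appearing in the equation, so the natural energy identity (multiply the equation in a suitable nondivergence/Cordes framework, integrate by parts, and control cross terms) should produce $\int |\nabla u|^{2\gamma}|D^2 u|^2$ controlled by $\int |h|^2$ plus lower-order terms — this is precisely where the restriction $\gamma>0$ and the normalization are used. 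I would therefore devote the bulk of the argument to recording this a priori estimate (citing or adapting \cite{Attou18} and the Cordes condition satisfied by $\mathrm{Id}_n + (p-2)\xi\otimes\xi$), and only then apply it to $u_r$ and scale back. A secondary, more routine point is justifying the estimates for viscosity (not a priori smooth) solutions, handled by the usual regularization/approximation scheme together with the stability of viscosity solutions under the rescaling.
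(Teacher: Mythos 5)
Your overall strategy — rescale by the sharp free-boundary normalization $u_r(y) = r^{-\beta}u(x_0+ry)$, verify via Theorem \ref{Improved Regularity} that $u_r$ is a normalized solution of an equation of the same type with bounded right-hand side, then apply the interior $L^\infty$-gradient estimate (Theorem \ref{Thm-grad-est}) together with the $W^{2,2}$ Hessian bound (Theorem \ref{Hes-est}) and scale back — is sound, and it is genuinely different from, and simpler than, the paper's proof. The paper instead runs a contradiction/compactness argument through the dyadic iteration quantity
$\mathcal{S}_r[u] = \bigl(\intav{B_r}(|\nabla u|^\gamma|D^2 u|)^2\bigr)^{1/(2(1+\gamma))}$,
proving a discrete decay inequality \eqref{L2-estimates1} and deducing the estimate a posteriori. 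Your route is more direct because the sharp normalization $r^\beta$ is already known from Theorem \ref{Improved Regularity}, so you do not need to re-derive the correct decay rate through an iteration that normalizes by $\mathcal{S}$ itself; the trade-off is that you need the $W^{2,2}$ and gradient estimates to hold with a universal constant depending only on $\|u_r\|_{L^\infty}$ and $\|f_r\|_{L^\infty}$, which is exactly what Theorems \ref{Hes-est} and \ref{Thm-grad-est} supply.

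The one place where you should not have deferred is the exponent bookkeeping. If you finish the computation you started, the answer is unambiguous: with $A := \gamma(1-\beta) + 2 - \beta = (\gamma+2) - \beta(\gamma+1) = -\tfrac{m(\gamma+2)}{\gamma+1-m}$, the scale-invariance of averages gives
\[
\left(\intav{B_{r/2}(x_0)}\bigl(|\nabla u|^\gamma|D^2 u|\bigr)^2\,dx\right)^{1/2}
= r^{-A}\left(\intav{B_{1/2}}\bigl(|\nabla u_r|^\gamma|D^2 u_r|\bigr)^2\,dy\right)^{1/2}
\le \mathrm{M}_0\, r^{\frac{(\gamma+2)m}{\gamma+1-m}},
\]
so the exponent you obtain is $\tfrac{(\gamma+2)m}{\gamma+1-m}$, not $\tfrac{\gamma m}{\gamma+1-m}$. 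There is no hidden ``correction'' that converts one into the other. In fact the paper's own iteration \eqref{L2-estimates1} yields $\mathcal{S}_r[u]\lesssim r^{\frac{(2+\gamma)m}{(1+\gamma)(1+\gamma-m)}}$, and since $\mathcal{S}_r[u]^{1+\gamma}$ is exactly the $L^2$-average in question, this also produces the exponent $\tfrac{(\gamma+2)m}{\gamma+1-m}$ once the outer power $1+\gamma$ is taken into account. So your computation agrees with what the paper's argument actually delivers, and the discrepancy is with the theorem statement itself, which appears to carry a typographical error in the exponent. You should have pressed the elementary arithmetic to completion rather than conjecturing a correction into existence; it would have exposed this issue.

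Two secondary cautions. First, you correctly flag that the crux is a universal weighted Hessian estimate; in the paper this is precisely Theorem \ref{Hes-est} combined with the gradient bound from Theorem \ref{Thm-grad-est} (the weight $|\nabla u_r|^\gamma$ is handled crudely by $\|\nabla u_r\|_{L^\infty}^\gamma$, exactly as you propose). But note that Theorem \ref{Hes-est} is only available under a Cordes-type smallness condition on $\gamma$ and $|p-2-\gamma|$; this restriction is not visible in the statement of Theorem \ref{Thm-Aver-L2} and constrains both your proof and the paper's. Second, the $W^{2,2}$ estimate formally depends on $\|f\|_{W^{1,1}}$ of the right-hand side $f_r = \mathfrak{a}_r(u_r)_+^m$, which requires some control of $\nabla\mathfrak{a}_r$; the paper glosses over this as well, so it is a shared issue rather than a gap specific to your approach.
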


Next, as an application, a qualitative Liouville-type result for entire solutions is addressed provided that their growth at infinity can be controlled appropriately (see \cite[Theorem 3.1]{BirDem} for related results).

\begin{theorem}[{\bf Liouville-type Theorem I}]\label{Thm Liouv-I}
Suppose that $\mathrm{A} {\ref{Assumption_lambda}}$ holds, and let $u$ be a non-negative viscosity solution of
\[
|\nabla u(x)|^{\gamma}\NDelta u(x)=\mathfrak{a}(x)u^m_+(x) \quad \text{in} \quad \mathbb{R}^n,
\]
and $u(x_0)=0$. If 
\begin{equation}\label{o-pequeno}
u(x)=\text{o}\left(|x|^{\beta}\right), \quad \text{as} \quad |x|\rightarrow \infty,
\end{equation}
then $u\equiv 0$.
\end{theorem}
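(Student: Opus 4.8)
The plan is to argue by contradiction, confronting the non-degeneracy estimate of Theorem~\ref{Thm-Non-Deg}, promoted to all scales, with the decay hypothesis \eqref{o-pequeno}. Assuming $u\not\equiv 0$, continuity makes $\{u>0\}$ a nonempty open subset of $\mathbb{R}^n$, and I would fix any point $y\in\overline{\{u>0\}}$.

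First I would upgrade non-degeneracy to a scale-invariant statement. For $\rho>0$ consider $v_\rho(x):=\rho^{-\beta}u(y+\rho x)$ on $B_1$. Because $\nabla v_\rho/|\nabla v_\rho|=\nabla u/|\nabla u|$ (evaluated at $y+\rho x$), the normalized $p$-Laplacian scales as $\Delta_p^{\mathrm{N}}v_\rho(x)=\rho^{2-\beta}\,\Delta_p^{\mathrm{N}}u(y+\rho x)$; combining this with $|\nabla v_\rho|^\gamma=\rho^{\gamma(1-\beta)}|\nabla u|^\gamma$ and $u_+^m(y+\rho x)=\rho^{\beta m}(v_\rho)_+^m(x)$, and using the identity $\beta(\gamma+1-m)=\gamma+2$ from \eqref{beta-exponent}, all powers of $\rho$ cancel, so $v_\rho$ is again a non-negative, bounded viscosity solution of $|\nabla v_\rho|^\gamma\Delta_p^{\mathrm{N}}v_\rho=\mathfrak{a}(y+\rho\,\cdot)\,(v_\rho)_+^m$ in $B_1$, with the same structural constants and with $\mathfrak{a}(y+\rho\,\cdot)$ still satisfying Assumption~$\mathrm{A}\ref{Assumption_lambda}$. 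Since $0\in\overline{\{v_\rho>0\}}\cap B_{1/2}$, applying Theorem~\ref{Thm-Non-Deg} to $v_\rho$ at the origin with radius $\tfrac14$ and undoing the scaling would yield
\[
\sup_{\overline{B}_r(y)}u\;\ge\;\mathrm{C}_{\mathrm{ND}}\,r^\beta\qquad\text{for every }r>0,
\]
with the universal constant of \eqref{Const-Radial-Profile} (after a harmless renaming).

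Then I would play this lower bound against \eqref{o-pequeno}. Given $\varepsilon>0$, pick $R_\varepsilon$ with $u(x)\le\varepsilon|x|^\beta$ for $|x|\ge R_\varepsilon$ and set $M_\varepsilon:=\max_{\overline{B}_{R_\varepsilon}}u<\infty$; splitting $\overline{B}_r(y)$ according to whether $|x|<R_\varepsilon$ or $|x|\ge R_\varepsilon$ gives $\sup_{\overline{B}_r(y)}u\le M_\varepsilon+\varepsilon(r+|y|)^\beta$. Combining with the scaled non-degeneracy, dividing by $r^\beta$, and sending $r\to\infty$ (so $M_\varepsilon r^{-\beta}\to0$ and $(1+|y|/r)^\beta\to1$, since $\beta>0$) forces $\mathrm{C}_{\mathrm{ND}}\le\varepsilon$; letting $\varepsilon\to0$ contradicts $\mathrm{C}_{\mathrm{ND}}>0$. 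Hence $\{u>0\}$ is empty and $u\equiv 0$.

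The main obstacle is the scaling step: one has to verify that $v_\rho$ solves \emph{exactly} the same equation, with unchanged structural data and, above all, the same non-degeneracy constant --- which is precisely where the exponent $\beta$ of \eqref{beta-exponent} is pinned down, via the homogeneity balance $\gamma(1-\beta)+2-\beta+\beta m=0$. Beyond that, the argument is a soft comparison of growth rates. (The assumption $u(x_0)=0$ serves only to exclude strictly positive solutions; in fact the proof shows $u\equiv0$ as soon as $\{u>0\}$ has an accumulation point, which is automatic once $u\not\equiv0$.)
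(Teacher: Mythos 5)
Your proof is correct, and it takes a genuinely different route from the paper. The paper's proof of Theorem~\ref{Thm Liouv-I} rescales at the vanishing point $x_0$ (setting $\varphi_k=u(k\,\cdot)/k^{\beta}$), shows via the hypothesis \eqref{o-pequeno} that $\|\varphi_k\|_{L^{\infty}(B_r)}\to 0$, and then invokes the \emph{upper} growth bound from Theorem~\ref{Improved Regularity} to squeeze $u$ to zero. You instead rescale at a point $y$ of positivity, observe that the equation is invariant under $u\mapsto\rho^{-\beta}u(y+\rho\,\cdot)$ precisely because $\beta(\gamma+1-m)=\gamma+2$, and thereby promote the \emph{lower} growth bound of Theorem~\ref{Thm-Non-Deg} to all radii, which then contradicts the $o(|x|^{\beta})$ decay directly. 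Both arguments hinge on the same scaling structure, but they lean on dual estimates (improved regularity vs.\ non-degeneracy). Your route has the advantage of being fully rigorous as written and of making visible that the hypothesis $u(x_0)=0$ is not actually needed --- any point of $\overline{\{u>0\}}$ serves --- which is consistent with the fact that Theorem~\ref{Thm Liouv-II} (which implies Theorem~\ref{Thm Liouv-I}, since the $o$-decay gives $\limsup u/|x|^{\beta}=0<\mathrm{C}_{\mathrm{ND}}$) carries no vanishing hypothesis. One small remark: the statement of Theorem~\ref{Thm-Non-Deg} allows $y\in\overline{\{u>0\}}\cap B_{1/2}$ and its proof first reduces to $u(y)>0$; your application with $y\in\{u>0\}$ and $v_{\rho}(0)>0$ is exactly the case where the barrier argument is run, so the citation is clean.
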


Finally, we establish a sharp Liouville-type result, which asserts that any entire viscosity solution satisfying a controlled growth condition at infinity must be identically zero.

\begin{theorem}[{\bf Liouville-type Theorem II}]\label{Thm Liouv-II}
Let \( u \) be a non-negative viscosity solution to 
\[
|\nabla u(x)|^{\gamma}\NDelta u(x)=\mathfrak{a}(x)u^m_+(x) \quad \text{in} \quad \mathbb{R}^n.
\]
Then, \( u \equiv 0 \) provided that
\[
\limsup_{|x| \to \infty} \frac{u(x)}{|x|^{\beta}} <\mathrm{C}_{\mathrm{ND}}(\gamma, n, m, p, \lambda_0),
\]
where $\mathrm{C}_{\mathrm{ND}}(\gamma, n, m, p, \lambda_0)>0$ comes from \eqref{Const-Radial-Profile}.
\end{theorem}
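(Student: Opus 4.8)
The plan is to argue by contradiction: suppose $u \not\equiv 0$, so that $\{u>0\} \neq \emptyset$ and hence (since $u \geq 0$ is continuous and not identically zero) the free boundary $\partial\{u>0\}$ is nonempty. Fix a free boundary point $x_0 \in \partial\{u>0\}$. The key tool is the non-degeneracy estimate from Theorem \ref{Thm-Non-Deg}, which in the entire setting (applying it on balls $B_r(x_0)$ of arbitrarily large radius $r$, after the usual rescaling that reduces any ball to the unit ball) yields
\[
\sup_{\overline{B}_r(x_0)} u \;\geq\; \mathrm{C}_{\mathrm{ND}}(\gamma,n,m,p,\lambda_0)\, r^{\beta}
\qquad \text{for all } r>0.
\]
First I would verify that Theorem \ref{Thm-Non-Deg} indeed propagates to all scales on $\mathbb{R}^n$: the proof of that theorem builds a radial comparison subsolution of the form \eqref{Radial-profile} touching $u$ from below at scale $r$, and since the equation is posed on all of $\mathbb{R}^n$ there is no obstruction to taking $r \to \infty$ — the constant $\mathrm{C}_{\mathrm{ND}}$ is scale-invariant precisely because $\beta = \frac{\gamma+2}{\gamma+1-m}$ is the natural homogeneity exponent of the operator–absorption pair. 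This is the step where I would be most careful, since one must check that the comparison argument does not secretly use the boundary datum $g$ or a bounded domain.

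Next I would convert the lower bound on $\sup_{\overline B_r(x_0)} u$ into a statement about the limsup at infinity. For each $r>0$ pick $y_r \in \overline{B}_r(x_0)$ with $u(y_r) = \sup_{\overline B_r(x_0)} u \geq \mathrm{C}_{\mathrm{ND}} r^{\beta}$. As $r \to \infty$ we have $|y_r - x_0| \leq r$, hence $|y_r| \leq |x_0| + r$, and so
\[
\frac{u(y_r)}{|y_r|^{\beta}} \;\geq\; \frac{\mathrm{C}_{\mathrm{ND}}\, r^{\beta}}{(|x_0| + r)^{\beta}} \;\longrightarrow\; \mathrm{C}_{\mathrm{ND}} \qquad \text{as } r \to \infty,
\]
because $|x_0|$ is fixed. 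Since $|y_r| \to \infty$, this forces
\[
\limsup_{|x|\to\infty} \frac{u(x)}{|x|^{\beta}} \;\geq\; \mathrm{C}_{\mathrm{ND}}(\gamma,n,m,p,\lambda_0),
\]
which directly contradicts the hypothesis $\limsup_{|x|\to\infty} u(x)/|x|^{\beta} < \mathrm{C}_{\mathrm{ND}}$. Therefore $u \equiv 0$.

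The main obstacle I anticipate is not the final contradiction — which is essentially immediate once the non-degeneracy bound is available at all scales — but rather making rigorous the claim that Theorem \ref{Thm-Non-Deg} applies at arbitrarily large radii for an \emph{entire} solution. Concretely, one needs the radial barrier $\omega_\gamma^{\mathrm{R}}$ from \eqref{Radial-profile}, appropriately rescaled and centered, to remain a valid viscosity subsolution on large annuli and to be comparable with $u$ there; the comparison principle for the (possibly singular/degenerate) operator $|\nabla u|^\gamma \Delta_p^{\mathrm{N}} u$ with the strong absorption term must be invoked on these large domains. Since Theorem \ref{Thm-Non-Deg} is already stated and proved for solutions in $B_1$, the clean way to handle this is to rescale: for any $R>0$ set $u_R(x) := R^{-\beta} u(x_0 + Rx)$, note that $u_R$ solves an equation of the same form in $B_1$ with the same structural constants (here the precise value of $\beta$ is exactly what makes the rescaled Thiele modulus stay within $[\lambda_0,\Lambda_0]$, up to harmless constants), apply Theorem \ref{Thm-Non-Deg} to $u_R$ at $y = 0 \in \overline{\{u_R>0\}} \cap B_{1/2}$ with radius $1/2$, and scale back. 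I would also remark that this sharp constant version refines Theorem \ref{Thm Liouv-I}, since the hypothesis $u(x) = \mathrm{o}(|x|^\beta)$ certainly implies the limsup is $0 < \mathrm{C}_{\mathrm{ND}}$.
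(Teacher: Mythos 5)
Your argument is correct in spirit and takes a genuinely different route from the paper's. The paper compares $u$ \emph{from above} with the explicit radial solution $v_R$ of the Dirichlet problem on $B_R$ with constant boundary data $\sup_{\partial B_R} u$; the growth hypothesis forces the dead-core radius of $v_R$ to escape to infinity, so $v_R \to 0$ pointwise and hence $u \equiv 0$. You instead use the non-degeneracy estimate (Theorem \ref{Thm-Non-Deg}) as a \emph{lower} barrier: if $u$ is positive somewhere, then after the rescaling $u_R(x) = R^{-\beta}u(x_0+Rx)$ — which you correctly note preserves the structure of the equation and keeps the Thiele modulus in $[\lambda_0,\Lambda_0]$ because $\gamma(1-\beta)+2-\beta+m\beta=0$ — the constant $\mathrm{C}_{\mathrm{ND}}$ is captured at all scales, forcing $\limsup_{|x|\to\infty} u(x)/|x|^\beta \geq \mathrm{C}_{\mathrm{ND}}$. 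Both proofs hinge on the same radial profile \eqref{Radial-profile}; the paper uses it as a supersolution from above, you invoke it (through the non-degeneracy proof) as a subsolution from below. Your route makes the sharpness of $\mathrm{C}_{\mathrm{ND}}$ transparent; the paper's avoids any case analysis.

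There is one logical slip to correct. You assert that $u \not\equiv 0$ implies $\partial\{u>0\} \neq \emptyset$. This is false: if $u>0$ everywhere in $\mathbb{R}^n$ then $\{u>0\}=\mathbb{R}^n$ and the free boundary is empty, and the hypothesis does not rule this case out a priori. Fortunately the slip is harmless, because Theorem \ref{Thm-Non-Deg} is stated for \emph{any} $y \in \overline{\{u>0\}} \cap B_{1/2}$, not just free boundary points: its proof touches $u$ from below at $y$ with the barrier $\Psi(x)=\mathrm{c}|x-y|^\beta$ and only uses $u(y)>0$ to reach a contradiction. So you should simply pick any $x_0$ with $u(x_0)>0$ (which exists once $u \not\equiv 0$), apply the rescaled non-degeneracy estimate at that point, and your argument then goes through verbatim — including the observation that $u(y_r)\to\infty$ together with local boundedness of the continuous function $u$ forces $|y_r|\to\infty$. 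With that one correction, the proof is complete and correct.
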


\medskip

\subsection{Motivation and State-of-the-Art}

\subsubsection*{How can we find the $\Delta_p^{\mathrm{N}}(\cdot)$? A journey through game theoretic methods in PDEs.
}

Over the past two decades, a connection between stochastic Tug-of-War games and nonlinear partial differential equations of \( p \)-Laplacian type, initiated by the pioneering work of Peres and Sheffield \cite{PerShef08}, has attracted significant attention and introduced new trends in the application of game-theoretical methods to PDEs.

We recall that elliptic PDE models governed by normalized operators with an elliptic nature have gained increasing interest in recent years due to their broad connections with:
\begin{enumerate}
    \item[\checkmark] Stochastic processes (random walks);
    \item[\checkmark] Probability theory (dynamic programming principles);
    \item[\checkmark] Game theory (Tug-of-War games with noise);
    \item[\checkmark] Analysis (asymptotic mean value characterizations of solutions to certain nonlinear PDEs),
\end{enumerate}
among others (cf. \cite{AHP21}, \cite{AP20}, \cite{Arro-Parv24}, \cite{BCMR22}, \cite{BlancRossi-Book}, \cite{Lewicka20}, \cite{MPR12}, \cite{Parvia2024}, and \cite{Rossi11} for insightful contributions).

As motivation, we will discuss \textbf{stochastic Tug-of-War games with noise}, which model random processes where two players compete to optimize their outcomes (cf. \cite{Lewicka20} and \cite{Parvia2024} for detailed mathematical insights). Specifically, we introduce two competing players: A token is initially placed at \( x_0 \in \Omega \subset \mathbb{R}^n \). A biased coin with probabilities \( \alpha_0 \) and \( \beta_0 \) (where \( \alpha_0 + \beta_0 = 1 \), \( \beta_0 > 0 \), and \( \alpha_0 \geq 0 \)) is then tossed. If the outcome is headed (with probability \( \beta_0 \)), the next position \( x_1 \in B_{\varepsilon}(x_0) \) is chosen according to the uniform probability distribution over the ball \( B_{\varepsilon}(x_0) \), as in the random walk on balls.

Conversely, if the outcome is tails (with probability \( \alpha_0 \)), a fair coin is flipped, and the winner of the toss moves the token to a new position \( x_1 \in B_\varepsilon(x_0) \). The players continue this process until the token exits the bounded domain \( \Omega \). At the conclusion of the game, Player II compensates Player I with an amount determined by the payoff function \( \mathrm{F}(x_\tau) \), where \( x_\tau \) is the first position outside \( \Omega \).

The value of the game is defined as the expected payoff \( \mathrm{F}(x_\tau) \), considering that Player I aims to maximize the outcome while Player II seeks to minimize it. More precisely, the value of the game for Player I is given by:

$$
    u^{\varepsilon}_{\mathrm{I}}(x_0) = \sup_{\mathrm{S}_{\mathrm{I}}} \inf_{\mathrm{S}_{\mathrm{II}}} \mathbb{E}_{\mathrm{S}_{\mathrm{I}}, \mathrm{S}_{\mathrm{II}}}^{x_0} [\mathrm{F}(x_\tau)],
$$
while the value of the game for Player II is expressed as:
$$
    u^{\varepsilon}_{\mathrm{II}}(x_0) = \inf_{\mathrm{S}_{\mathrm{II}}} \sup_{\mathrm{S}_{\mathrm{I}}} \mathbb{E}_{\mathrm{S}_{\mathrm{I}}, \mathrm{S}_{\mathrm{II}}}^{x_0} [\mathrm{F}(x_\tau)],
$$
where $x_0$ represents the initial position of the game, and $\mathrm{S}_{\mathrm{I}}$ and $\mathrm{S}_{\mathrm{II}}$ denote the strategies adopted by the players, respectively.

In such a context, the value function satisfies the \textbf{Dynamic Programming Principle} (DPP, for short):

\begin{equation}\tag{{\bf DPP}}
    u_\varepsilon(x) = \frac{\alpha_0}{2} \left\{ \sup_{B_\varepsilon(x)} u_\varepsilon + \inf_{B_\varepsilon(x)} u_\varepsilon \right\} + \beta_0 \intav{B_\varepsilon(0)} u_\varepsilon(x + h) \,d\mathcal{L}^n ,
\end{equation}
where
$$
    u_\varepsilon = \mathrm{F} \quad \text{on} \quad \Gamma_\varepsilon := \{ x \in \mathbb{R}^n \setminus \Omega : \operatorname{dist}(x, \partial \Omega) \leq \varepsilon \}
$$
and
$$
    \alpha_0 = \frac{p - 2}{p + n} \quad \text{and} \quad \beta_0 = 1 - \alpha_0 = \frac{2 + n}{p + n} \quad (\text{for} \quad p>2).
$$

Note that the DPP can be interpreted as a discretization of an elliptic PDE, which will be introduced shortly.

\begin{theorem}[{\bf \cite[Theorem 3.9]{Lewicka20} and \cite[ Theorem 3.1]{PerShef08}}]\label{Thm3.1-Parviainen}
    Given a bounded Borel boundary function $\mathrm{F}:  \Gamma_{\epsilon} \to  \mathbb{R}$, there is a bounded Borel function $u$ that satisfies the $\mathrm{DPP}$ with the boundary values $\mathrm{F}$.
\end{theorem}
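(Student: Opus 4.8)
The plan is to reduce the statement to a fixed point problem for the averaging operator associated with the \(\mathrm{DPP}\) and to solve it by a Perron-type construction, then (optionally) to identify the fixed point with the value of the game so as to have a probabilistic description at hand. Write \(\mathscr{B}\) for the space of bounded Borel functions on \(\Omega\cup\Gamma_\varepsilon\) and, for \(u\in\mathscr{B}\), set \((\mathrm{T}u)(x)=\mathrm{F}(x)\) for \(x\in\Gamma_\varepsilon\) and
\[
(\mathrm{T}u)(x)=\frac{\alpha_0}{2}\Big(\sup_{B_\varepsilon(x)}u+\inf_{B_\varepsilon(x)}u\Big)+\beta_0\intav{B_\varepsilon(0)}u(x+h)\,d\mathcal{L}^n,\qquad x\in\Omega .
\]
A function satisfies the \(\mathrm{DPP}\) with boundary data \(\mathrm{F}\) precisely when it is a fixed point of \(\mathrm{T}\) that equals \(\mathrm{F}\) on \(\Gamma_\varepsilon\). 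First I would record the elementary properties of \(\mathrm{T}\): it is order preserving; it maps Borel functions to Borel functions (the average term by Fubini, and \(x\mapsto\sup_{B_\varepsilon(x)}u\), \(x\mapsto\inf_{B_\varepsilon(x)}u\) are Borel since the balls vary continuously and one may restrict to a countable dense set of centers); and it preserves the order interval \(\mathscr{K}=\{u\in\mathscr{B}:\ \inf_{\Gamma_\varepsilon}\mathrm{F}\le u\le\sup_{\Gamma_\varepsilon}\mathrm{F},\ u|_{\Gamma_\varepsilon}=\mathrm{F}\}\), because every ball-average, ball-supremum and ball-infimum of such a \(u\) again lies between \(\inf_{\Gamma_\varepsilon}\mathrm{F}\) and \(\sup_{\Gamma_\varepsilon}\mathrm{F}\). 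In particular \(\mathrm{T}:\mathscr{K}\to\mathscr{K}\) and \(\mathscr{K}\neq\emptyset\).

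Next I would run the monotone iteration. Take \(u_0\equiv\sup_{\Gamma_\varepsilon}\mathrm{F}\) on \(\Omega\) (and \(=\mathrm{F}\) on \(\Gamma_\varepsilon\)); a direct check gives \(\mathrm{T}u_0\le u_0\), hence by monotonicity the iterates \(u_{k+1}=\mathrm{T}u_k\) decrease pointwise inside \(\mathscr{K}\), so \(u_\infty:=\inf_k u_k\in\mathscr{K}\). Passing to the limit is the delicate point: by monotone convergence \(\inf_{B_\varepsilon(x)}u_k\downarrow\inf_{B_\varepsilon(x)}u_\infty\) and \(\intav{B_\varepsilon(0)}u_k(x+\cdot)\downarrow\intav{B_\varepsilon(0)}u_\infty(x+\cdot)\), so \(u_\infty\ge\mathrm{T}u_\infty\); but for the \emph{supremum} term one only gets \(\sup_{B_\varepsilon(x)}u_\infty\le\inf_k\sup_{B_\varepsilon(x)}u_k\), and equality can genuinely fail for a decreasing sequence of Borel functions, so the naive iteration need not converge to a fixed point. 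This is the main obstacle, and I would resolve it in one of the two standard ways. (i) \emph{Transfinite iteration} (in the spirit of Peres--Sheffield): iterate \(\mathrm{T}\) along ordinals, taking infima at limit ordinals, and show the process stabilizes at a countable ordinal; the stabilized function is Borel and a genuine fixed point. (ii) \emph{Killing/contraction}: since \(\Omega\) is bounded and \(\beta_0>0\), the random walk on balls leaves \(\Omega\) within \(N\asymp\mathrm{diam}(\Omega)/\varepsilon\) steps with probability bounded below uniformly on \(\Omega\); weighting the sup-norm by a factor that decays with the expected exit time turns a suitable iterate \(\mathrm{T}^N\) into a strict contraction of \(\mathscr{K}\), and Banach's theorem yields the (here unique) fixed point with no measurability issue. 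When \(\beta_0=0\), i.e. pure Tug-of-War, one uses instead that a single player can force the token out of the bounded domain.

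Finally, with a Borel fixed point \(u\) in hand, I would identify it with the game value by the usual optional-stopping comparison: conditioning on the first coin toss and letting each player play \(\varepsilon\)-optimally in the ensuing subgame shows \(u^\varepsilon_{\mathrm{I}}=u=u^\varepsilon_{\mathrm{II}}\), which also re-derives the \(\mathrm{DPP}\) from the probabilistic side via one-step conditioning; the only subtlety there is a measurable-selection step needed to exchange \(\sup_{\mathrm{S}_{\mathrm{I}}}\inf_{\mathrm{S}_{\mathrm{II}}}\) with the expectation over the first move, which is handled by choosing near-optimal Borel strategies along a countable net. All told, I expect the production of a Borel fixed point — equivalently, passing to the limit in the nonlinear ball-supremum term — to be the crux, with the remaining verifications being routine bookkeeping.
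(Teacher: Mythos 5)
The statement in question is a cited background result — the paper quotes it from Lewicka's book and Peres--Sheffield and gives no proof of its own — so there is nothing in the paper itself to compare against; what follows evaluates your proposal on its merits. Your set-up is correct, and you put your finger on the genuine obstacle: for a pointwise-decreasing sequence of Borel functions the ball-supremum need not commute with the infimum, so the naive $\omega$-limit of $u_{k+1}=\mathrm{T}u_k$ need only satisfy $u_\infty\ge \mathrm{T}u_\infty$. Of your two repairs, the contraction route \textbf{(ii)} is the one that actually closes the argument, and it can be made precise as follows. For $w$ vanishing on $\Gamma_\varepsilon$, introduce the monotone, positively homogeneous sub-Markov operator $Sw(x)=\alpha_0\sup_{B_\varepsilon(x)}w+\beta_0\intav_{B_\varepsilon(x)}w$; using $\sup u-\sup v\le\sup(u-v)$ and $\inf u-\inf v\le\sup(u-v)$ one gets $\mathrm{T}u-\mathrm{T}v\le S(u-v)$, hence by monotonicity $|\mathrm{T}^N u-\mathrm{T}^N v|\le S^N|u-v|\le\|u-v\|_\infty\,S^N\mathbf{1}_\Omega$. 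Now $S^N\mathbf{1}_\Omega(x)$ is the \emph{worst-case over adversarial $\alpha_0$-moves} probability that the token has not left $\Omega$ in $N$ steps — note it is this adversarial walk, not the plain random walk you invoke, that must be shown to exit, and that is exactly why $\beta_0>0$ is essential here: on the event that the coin gives $\beta_0$ for $N_0\asymp\mathrm{diam}(\Omega)/\varepsilon$ consecutive steps and each random increment has component $\ge\varepsilon/2$ in a fixed direction, the adversary never moves and the token exits; this event has probability $p_0=(\beta_0 q_n)^{N_0}>0$ uniformly in the starting point and the adversary. Thus $\|S^{N_0}\mathbf{1}_\Omega\|_\infty\le 1-p_0<1$, $\mathrm{T}^{N_0}$ is a strict contraction on the complete metric space $\mathscr K$, and Banach's theorem delivers existence \emph{and} uniqueness of the Borel fixed point at once. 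That is a clean, elementary argument; whether it coincides with the one in the cited references I won't claim, but it is at least as short, and uniqueness comes free.

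Two smaller points. Your alternative \textbf{(i)}, transfinite iteration, does \emph{not} stand on its own: a monotone operator that fails to be $\omega$-continuous (which is exactly the failure you diagnosed) need not stabilize at a countable ordinal on a lattice of cardinality $2^{\aleph_0}$, and the pointwise infimum of an $\omega_1$-indexed family of Borel functions need not be Borel, so both ``stabilizes at a countable ordinal'' and ``the stabilized function is Borel'' require a separate argument — in practice one would fall back on the contraction bound, at which point (i) is superfluous. And the Borel measurability of $x\mapsto\sup_{B_\varepsilon(x)}u$ is not really a ``countable dense set of centers'' matter when $u$ is merely Borel; the clean reason is semicontinuity: $\{x:\sup_{B_\varepsilon(x)}u>t\}=\{u>t\}+B_\varepsilon$ is open, so the ball-supremum is lower semicontinuous (dually the ball-infimum is upper semicontinuous), while the ball-average is continuous by dominated convergence. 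The closing identification with the game value is not needed for the present statement — in the paper that is the separate Theorem~\ref{Thm3.1-Parviainen}'s companion on the value of the game — but your one-step-conditioning-plus-measurable-selection sketch is the right outline for it.
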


\begin{theorem}[{\bf \cite[Theorem 3.14]{Lewicka20} and \cite[Theorem 3.2]{PerShef08} - Value of the Game}] It holds that $u =  u^{\varepsilon}_{\mathrm{I}} = u^{\varepsilon}_{\mathrm{II}}$, where $u$ is a solution of the $\mathrm{DPP}$ as in Theorem \ref{Thm3.1-Parviainen}. In other words, the game has a unique value.
\end{theorem}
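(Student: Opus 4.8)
The plan is to run the classical martingale comparison between the DPP solution $u$ furnished by Theorem~\ref{Thm3.1-Parviainen} and the two game values $u^{\varepsilon}_{\mathrm{I}}$, $u^{\varepsilon}_{\mathrm{II}}$. The inequality $u^{\varepsilon}_{\mathrm{I}}\le u^{\varepsilon}_{\mathrm{II}}$ is immediate from $\sup\inf\le\inf\sup$, so it suffices to establish the sandwich $u^{\varepsilon}_{\mathrm{II}}(x_0)\le u(x_0)\le u^{\varepsilon}_{\mathrm{I}}(x_0)$ for every $x_0\in\Omega$; chaining the three inequalities then forces all four quantities to coincide, which is exactly the assertion that the game has a unique value. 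First I note that, since $\mathrm{F}$ is bounded, the DPP gives $\inf_{\Gamma_{\varepsilon}}\mathrm{F}\le u\le\sup_{\Gamma_{\varepsilon}}\mathrm{F}$, so $u$ is a bounded Borel function, and $u=\mathrm{F}$ on $\Gamma_{\varepsilon}$.

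For the bound $u(x_0)\le u^{\varepsilon}_{\mathrm{I}}(x_0)$, fix $\eta>0$ and let Player~I follow the \emph{almost optimal} strategy $S^{0}_{\mathrm{I}}$: whenever Player~I wins the coin toss at round $k$ with the token at $x_{k-1}$, he moves to a point $x_{k}\in\overline{B_{\varepsilon}(x_{k-1})}$ with $u(x_{k})\ge\sup_{\overline{B_{\varepsilon}(x_{k-1})}}u-\eta\,2^{-k}$ (such a point exists, and the choice can be made Borel-measurably). Let $(\mathcal{F}_{k})_{k}$ be the natural filtration of $(x_{k})$. Conditioning on $\mathcal{F}_{k-1}$ and splitting according to the biased coin — a random step is taken with probability $\beta_{0}$, contributing exactly $\intav{B_{\varepsilon}(0)}u(x_{k-1}+h)\,d\mathcal{L}^{n}$; Player~I moves with probability $\alpha_{0}/2$, contributing at least $\sup_{\overline{B_{\varepsilon}(x_{k-1})}}u-\eta\,2^{-k}$; Player~II moves with probability $\alpha_{0}/2$, contributing at least $\inf_{\overline{B_{\varepsilon}(x_{k-1})}}u$ no matter which strategy $S_{\mathrm{II}}$ she uses — and invoking the DPP satisfied by $u$, one gets $\mathbb{E}[u(x_{k})\mid\mathcal{F}_{k-1}]\ge u(x_{k-1})-\tfrac{\alpha_{0}}{2}\eta\,2^{-k}$. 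Consequently the process $\widetilde{M}_{k}:=u(x_{k})+\eta\sum_{j=1}^{k}2^{-j}$ is a submartingale for the game played with $(S^{0}_{\mathrm{I}},S_{\mathrm{II}})$, for \emph{any} strategy $S_{\mathrm{II}}$ of Player~II, and it is uniformly bounded by $\|u\|_{\infty}+\eta$.

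Next I would show that the exit time $\tau=\inf\{k:x_{k}\notin\Omega\}$ is finite almost surely, indeed with exponentially decaying tails uniformly in the starting point and in the strategies: since $\Omega$ is bounded and at every round there is probability at least $\beta_{0}\cdot c_{n}>0$ of a random step whose increment has a component of prescribed sign of size comparable to $\varepsilon$, the token leaves $\Omega$ within $N\sim\mathrm{diam}(\Omega)/\varepsilon$ rounds with probability bounded below, and the Markov property yields $\mathbb{P}(\tau>kN)\le(1-c)^{k}$. With $\tau<\infty$ a.s.\ and $\widetilde{M}$ bounded, the optional stopping theorem for submartingales gives
\[
u(x_{0})=\widetilde{M}_{0}\le\mathbb{E}^{x_{0}}_{S^{0}_{\mathrm{I}},S_{\mathrm{II}}}\big[\widetilde{M}_{\tau}\big]\le\mathbb{E}^{x_{0}}_{S^{0}_{\mathrm{I}},S_{\mathrm{II}}}\big[u(x_{\tau})\big]+\eta=\mathbb{E}^{x_{0}}_{S^{0}_{\mathrm{I}},S_{\mathrm{II}}}\big[\mathrm{F}(x_{\tau})\big]+\eta,
\]
using $u(x_{\tau})=\mathrm{F}(x_{\tau})$ as $x_{\tau}\in\Gamma_{\varepsilon}$. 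Taking the infimum over $S_{\mathrm{II}}$ and then the supremum over $S_{\mathrm{I}}$ yields $u(x_{0})\le u^{\varepsilon}_{\mathrm{I}}(x_{0})+\eta$, and letting $\eta\downarrow0$ gives $u(x_{0})\le u^{\varepsilon}_{\mathrm{I}}(x_{0})$. The reverse bound $u^{\varepsilon}_{\mathrm{II}}(x_{0})\le u(x_{0})$ is the mirror image: Player~II uses the strategy $S^{0}_{\mathrm{II}}$ moving to within $\eta\,2^{-k}$ of $\inf_{\overline{B_{\varepsilon}(x_{k-1})}}u$, which makes $u(x_{k})-\eta\sum_{j=1}^{k}2^{-j}$ a bounded supermartingale for any $S_{\mathrm{I}}$, and optional stopping gives $\inf_{S_{\mathrm{II}}}\sup_{S_{\mathrm{I}}}\mathbb{E}^{x_{0}}_{S_{\mathrm{I}},S_{\mathrm{II}}}[\mathrm{F}(x_{\tau})]\le u(x_{0})+\eta$, hence $u^{\varepsilon}_{\mathrm{II}}(x_{0})\le u(x_{0})$. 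Combining, $u^{\varepsilon}_{\mathrm{II}}(x_{0})\le u(x_{0})\le u^{\varepsilon}_{\mathrm{I}}(x_{0})\le u^{\varepsilon}_{\mathrm{II}}(x_{0})$, so all coincide; in particular the game has a unique value and the DPP solution is unique.

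I expect the main obstacle to be the two measure-theoretic points that make the martingale machinery run: the almost sure finiteness (with enough integrability) of the exit time $\tau$, and the Borel-measurable selection of the almost optimal strategies $S^{0}_{\mathrm{I}}$, $S^{0}_{\mathrm{II}}$ when $u$ is merely Borel. Both are resolved using the strictly positive noise probability $\beta_{0}>0$ together with the boundedness of $\Omega$; the rest is bookkeeping with conditional expectations and the DPP.
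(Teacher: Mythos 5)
The paper does not prove this theorem itself; it is quoted as a known background result with citations to Lewicka's monograph and Peres--Sheffield, so there is no ``paper's own proof'' to compare against. Your proposal is a correct reconstruction of the standard argument in those references: you obtain the sandwich $u^{\varepsilon}_{\mathrm{II}}\le u\le u^{\varepsilon}_{\mathrm{I}}$ by letting the favored player follow an almost-optimal strategy with tolerances $\eta 2^{-k}$, observing via the DPP that $u(x_k)+\eta\sum_{j\le k}2^{-j}$ is a bounded sub-/supermartingale against any opposing strategy, invoking a.s.\ finiteness of the exit time (guaranteed by the noise probability $\beta_0>0$ and boundedness of $\Omega$), and closing with optional stopping; combined with the trivial $\sup\inf\le\inf\sup$, this forces all three quantities to agree. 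This is exactly the approach of the cited sources, and you correctly flag the two genuinely delicate points --- finiteness of $\tau$ and Borel-measurable selection of the almost-optimal moves when $u$ is merely Borel --- which is where the cited proofs also spend their effort.
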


Finally, we can determine the PDE satisfied by a uniform limit derived from the DPP. For this reason, the limiting operator is referred to as the \textit{Game-theoretic \( p \)-Laplace operator}.

\begin{theorem}[{\bf \cite[Theorem 5.7]{Lewicka20} and  \cite[ Theorem 3.3]{PerShef08}}]
    Let $\Omega$ be a smooth domain, and let $\mathrm{F}: \mathbb{R}^n \setminus \Omega \to \mathbb{R}$ be a smooth function. Let $u$ be the unique viscosity solution to the problem:
$$
\left\{
\begin{array}{rcrcl}
\Delta_p^{\mathrm{N}} u & = & 0 & \text{in } &\Omega, \\
u & = &\mathrm{F} & \text{on} &\partial \Omega.
\end{array}
\right.
$$
Furthermore, let $u_\varepsilon$ be the value function for the Tug-of-War with noise with the boundary payoff function $\mathrm{F}$. Then,
\begin{equation}
u_\varepsilon \to u \quad \text{uniformly on } \overline{\Omega}, \quad \text{as } \quad  \varepsilon \to 0.
\end{equation}
\end{theorem}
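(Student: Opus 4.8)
The plan is to follow the classical game-theoretic route of Peres--Sheffield and Manfredi--Parviainen--Rossi, as presented in \cite{Lewicka20}: establish a uniform bound together with an \emph{asymptotic equicontinuity} estimate for the family $\{u_\varepsilon\}_{\varepsilon>0}$, extract a uniform limit along a subsequence, identify that limit as a viscosity solution of $\NDelta u=0$ with boundary datum $\mathrm{F}$ by Taylor-expanding the DPP, and then use the assumed uniqueness to upgrade subsequential convergence to convergence of the whole family.

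\textbf{Step 1 (Uniform bound).} Since $\mathrm{F}$ is bounded, monotonicity of the game value in the boundary data --- equivalently, the representation $u_\varepsilon=u^{\varepsilon}_{\mathrm{I}}=u^{\varepsilon}_{\mathrm{II}}$ from the value-of-the-game theorem above --- yields $\inf_{\Gamma_\varepsilon}\mathrm{F}\le u_\varepsilon\le\sup_{\Gamma_\varepsilon}\mathrm{F}$ on $\overline{\Omega}$, uniformly in $\varepsilon$.

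\textbf{Step 2 (Asymptotic equicontinuity).} This is the technical core. One shows: for every $\eta>0$ there are $r_0>0$ and $\varepsilon_0>0$ such that $|u_\varepsilon(x)-u_\varepsilon(y)|<\eta$ whenever $x,y\in\overline{\Omega}$, $|x-y|<r_0$ and $0<\varepsilon<\varepsilon_0$. For the interior part one couples the two Tug-of-War processes started at $x$ and $y$ (shared coin tosses and mirrored strategies) and uses a martingale/optional-stopping estimate to bound the probability that the two tokens fail to couple before exiting $\Omega$; the resulting payoff difference is $O(|x-y|)+o_\varepsilon(1)$. For the boundary part one exploits the smoothness of $\partial\Omega$ (hence an exterior ball at each $\xi\in\partial\Omega$) together with the continuity of $\mathrm{F}$ to build explicit annular sub- and supersolutions of the DPP that trap $u_\varepsilon$ within $\eta$ of $\mathrm{F}(\xi)$ near $\xi$.

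\textbf{Step 3 (Compactness).} Steps 1--2 feed an Arzel\`{a}--Ascoli argument adapted to asymptotically equicontinuous families, producing $\varepsilon_j\to 0$ and $v\in\mathrm{C}^0(\overline{\Omega})$ with $u_{\varepsilon_j}\to v$ uniformly on $\overline{\Omega}$; moreover $v=\mathrm{F}$ on $\partial\Omega$ by the boundary estimate.

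\textbf{Step 4 (Identification of the limit).} We check that $v$ solves $\NDelta v=0$ in the viscosity sense. Let $\phi\in\mathrm{C}^2$ touch $v$ from below at an interior point $x_0$; by uniform convergence there are $x_j\to x_0$ at which $u_{\varepsilon_j}-\phi$ has an approximate minimum. Inserting $\phi$ into the DPP at $x_j$ and using the second-order expansions
\[
\tfrac12\Big(\sup_{B_{\varepsilon}(x)}\phi+\inf_{B_{\varepsilon}(x)}\phi\Big)=\phi(x)+\tfrac{\varepsilon^2}{2}\Delta_{\infty}^{\mathrm{N}}\phi(x)+o(\varepsilon^2),\qquad
\intav{B_{\varepsilon}(0)}\phi(x+h)\,d\Leb=\phi(x)+\tfrac{\varepsilon^2}{2(n+2)}\Delta\phi(x)+o(\varepsilon^2),
\]
together with $\alpha_0=\tfrac{p-2}{p+n}$ and $\beta_0=\tfrac{n+2}{p+n}$, one obtains
\[
\frac{\alpha_0}{2}\Big(\sup_{B_{\varepsilon}(x)}\phi+\inf_{B_{\varepsilon}(x)}\phi\Big)+\beta_0\intav{B_{\varepsilon}(0)}\phi(x+h)\,d\Leb-\phi(x)=\frac{\varepsilon^2}{2(p+n)}\,\NDelta\phi(x)+o(\varepsilon^2).
\]
Since $u_{\varepsilon_j}$ satisfies the DPP with equality and $u_{\varepsilon_j}-\phi$ is nearly minimized at $x_j$, the left-hand side at $x_j$ is $\le o(\varepsilon_j^2)$, whence $\NDelta\phi(x_0)\le 0$; test functions touching from above give the reverse inequality, so $v$ is a viscosity solution (the degenerate case $\nabla\phi(x_0)=0$ being handled by the standard convention for $\NDelta$, as in \cite{Lewicka20}).

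\textbf{Step 5 (Conclusion).} By the uniqueness of the viscosity solution to $\NDelta u=0$ in $\Omega$ with $u=\mathrm{F}$ on $\partial\Omega$ (assumed in the statement), $v=u$. As every subsequence of $\{u_\varepsilon\}$ admits a further subsequence converging uniformly to this same $u$, the full family converges: $u_\varepsilon\to u$ uniformly on $\overline{\Omega}$.

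\textbf{Main obstacle.} The delicate point is Step 2: the coupling of the two processes and the optional-stopping estimate preventing the $O(\varepsilon)$-granularity of the walk from accumulating before exit, and, at the boundary, producing genuine DPP sub/supersolutions rather than merely PDE barriers.
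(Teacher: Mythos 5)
The paper does not prove this statement: it is quoted verbatim as a background result, attributed to \cite[Theorem 5.7]{Lewicka20} and \cite[Theorem 3.3]{PerShef08}, so there is no ``paper's own proof'' to compare against. Your outline is a faithful reconstruction of the argument in those references --- uniform bound via monotonicity, asymptotic equicontinuity by coupling plus exterior-ball barriers, an Arzel\`{a}--Ascoli variant, identification of the limit by inserting a $\mathrm{C}^2$ test function into the DPP and using the expansion $\frac{\alpha_0}{2}\bigl(\sup_{B_\varepsilon}\phi+\inf_{B_\varepsilon}\phi\bigr)+\beta_0\intav{B_\varepsilon(0)}\phi(\cdot+h)\,d\Leb-\phi=\tfrac{\varepsilon^2}{2(p+n)}\NDelta\phi+o(\varepsilon^2)$, and finally uniqueness to pass from subsequences to the full family --- and the coefficients and the location of the technical difficulty (Step 2) are correctly identified.
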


Regarding regularity estimates, Luiro \textit{et al.} in \cite{LPS13} introduced a novel and relatively straightforward proof of Harnack's inequality and Lipschitz continuity for \( p \)-harmonic functions, where \( p > 2 \). The proof is based on the strategic selection of strategies for the stochastic Tug-of-War game, making it entirely distinct from the classical proofs established by De Giorgi and Moser. The methodology relies on a recently discovered connection between stochastic games and \( p \)-harmonic functions, as elucidated by Peres and Sheffield in their seminal work \cite{PerShef08}.

Subsequently, Luiro and Parviainen in \cite{LuiroParv18} established regularity properties for functions satisfying a specific dynamic programming principle. Such functions may arise, for instance, in the context of stochastic games or discretization methods. Their results can also be applied to derive regularity and existence results for the corresponding PDEs.

Finally, concerning recent trends in game-theoretical methods, Arroyo and Parviainen in \cite{Arro-Parv24} introduced a novel formulation of the Tug-of-War game, characterized by a dynamic programming principle (DPP) associated with the \( p \)-Laplacian for \( p \in (1, 2) \). In this setting, the asymptotic H\"older continuity of solutions can be directly deduced from recent Krylov-Safonov-type regularity results in the singular case. Additionally, the existence and uniqueness of solutions to the DPP are established. Furthermore, the regularity properties and the connection between this DPP (and its corresponding Tug-of-War game) and the \( p \)-Laplacian are thoroughly investigated.

\subsubsection*{PDEs models with the presence of dead cores - old and recent trends}

In the following, we will highlight some pivotal contributions concerning non-uniformly elliptic dead core models (in both divergence and non-divergence forms) over the past decades.

Several elliptic models with free boundaries arise in various phenomena associated with reaction-diffusion and absorption processes, both in pure and applied sciences. Notable examples include models in chemical and biological processes, combustion phenomena, and population dynamics, among other applications. A particularly significant problem within this framework pertains to diffusion processes under a sign constraint—the well-known one-phase problems—which, in chemical and physical contexts, represent the only relevant case to be considered (cf. \cite{Aris75-I, Aris75-II, Band-Sperb-Stak84}, and \cite{Diaz85} and references therein for further motivation). As motivation, a class of such problems is given by
\[
\left\{
\begin{array}{rclcl}
-\mathcal{Q}u(x) + f(u)\chi_{\{u > 0\}}(x) & = & 0 & \text{in } & \Omega \\
u(x) & = & g(x) & \text{on } & \partial\Omega,
\end{array}
\right.
\]
where \(\mathcal{Q}\) denotes a quasi-linear elliptic operator in divergence form with \( p \)-structure for \( 2 \leq p < \infty \) (cf. \cite{Choe91} and \cite{Serrin64} for further details), and \(\Omega \subset \mathbb{R}^n\) is a regular and bounded domain. In this setting, \( f \) is a continuous and monotonically increasing reaction term satisfying \( f(0) = 0 \), and \( 0 \leq g \in \mathrm{C}^0(\partial\Omega) \). In models from applied sciences, \( f(u) \) represents the ratio of the reaction rate at concentration \( u \) to that at concentration one.

It is worth noting that when the nonlinearity \( f \in C^1(\Omega) \) is locally \((p - 1)\)-Lipschitz near zero (i.e., \( f \) satisfies a Lipschitz condition of order \((p - 1)\) at \( 0 \) if there exist constants \(\mathrm{M}_0, \zeta > 0\) such that \( f(u) \leq \mathrm{M}_0 u^{p - 1} \) for \( 0 < u < \zeta \)), it follows from the Maximum Principle that non-negative solutions must, in fact, be strictly positive (cf. \cite{Vazq84}).

However, the function \( f \) may fail to be differentiable or to decay sufficiently fast at the origin. For instance, if \( f(t) \) behaves as \( t^q \) with \( 0 < q < p - 1 \), then \( f \) does not satisfy the Lipschitz condition of order \((p - 1)\) at the origin. In this scenario, problem (1.1) lacks the Strong Minimum Principle, meaning that non-negative solutions may completely vanish within an a priori unknown region of positive measure \(\Omega_0 \subset \Omega\), referred to as the \textit{Dead Core} set (cf. D\'iaz’s monograph \cite{Diaz85}, Chapter 1, for a comprehensive survey on this topic).

As an illustration of the aforementioned discussion (cf. \cite{Aris75-I}, \cite{Aris75-II}, and \cite{Stak86}), for a domain \(\Omega \subset \mathbb{R}^n\), certain (stationary) isothermal and irreversible catalytic reaction processes can be mathematically described by boundary value problems of the reaction-diffusion type, given by
\[
-\Delta u(x) + \lambda_0(x) u_+^q(x) = 0 \quad \text{in } \Omega, \quad \text{and} \quad u(x) = 1 \quad \text{on } \partial\Omega,
\]
where, in this context, \( u \) represents the concentration of a chemical reagent (or gas), and the non-Lipschitz kinetics corresponds to the \( q \)-th-order Freundlich isotherm. Furthermore, \(\lambda_0 > 0\), known as the \textit{Thiele Modulus}, governs the ratio between the reaction rate and the diffusion-convection rate.

As previously mentioned, when \( q \in (0, 1) \), the strong absorption due to chemical reactions may outpace the supply driven by diffusion across the boundary, potentially causing the complete depletion of the chemical reagent in certain sub-regions, known as dead core zones, mathematically represented as \(\Omega_0:= \{x \in \Omega: u(x) = 0\} \subset \Omega\). In these regions, no chemical reaction occurs. Consequently, understanding the qualitative and quantitative behavior of dead core solutions is of paramount importance in chemical engineering and other applied sciences.

More specifically, for an arbitrary domain \(\Omega \subset \mathbb{R}^n\), the manuscript \cite{PS06} is dedicated to the study of various phenomena related to dead cores and bursts in quasi-linear partial differential equations of the form:
\begin{equation}
\mathrm{div} \big(\mathfrak{A}(|\nabla u|) \nabla u \big) = f(u) \quad \text{in } \quad \Omega,
\end{equation}
where \(\mathfrak{A} \in \mathrm{C}^0((0, \infty))\) satisfies the conditions that the mapping \(\rho \mapsto \rho \mathfrak{A}(\rho)\) is increasing and \(\displaystyle \lim_{\rho \to 0} \rho \mathfrak{A}(\rho) = 0\). Moreover, the nonlinearity \( f \) is assumed to be continuous, satisfying \( f(0) = 0 \), non-decreasing on \(\mathbb{R}\), and strictly positive on \((0, \infty)\).

This work extends the results obtained in \cite{PucciSerrin2004}, where Pucci and Serrin investigated similar problems. The main contributions of the present paper establish the existence of solutions exhibiting dead cores. Additionally, the authors demonstrate that, under certain conditions, solutions may exhibit both a dead core and bursts within the core.

Recently, in the paper \cite{daSS18}, Da Silva and Salort investigated diffusion problems governed by quasi-linear elliptic models of the \( p \)-Laplace type, for which a minimum principle is unavailable. Specifically, they considered the following boundary value problem:

\begin{equation}
\left\{
\begin{array}{rclccc}
-\text{div}(\Phi(x, u, \nabla u)) + \lambda_0(x) f(u) \chi_{\{u > 0\}} &=& 0 & \text{in } & \Omega, \\[0.2cm]
u(x) &=& g(x) & \text{on } & \partial \Omega,
\end{array}
\right.
\end{equation}
where \(\Omega \subset \mathbb{R}^n\) is a bounded domain, \(\Phi: \Omega \times \mathbb{R}_+ \times \mathbb{R}^n \to \mathbb{R}^n\) is a continuous, monotone vector field satisfying \( p \)-ellipticity and \( p \)-growth conditions, \( 0 \leq g \in \mathrm{C}^0(\partial \Omega) \), \(\lambda_0 \in \mathrm{C}^0(\overline{\Omega})\) is a non-negative bounded function, and \( f \) is a continuous, increasing function with \( f(0) = 0 \). This model allows for the existence of solutions with dead core zones, i.e., a priori unknown regions where non-negative solutions vanish identically.

In this setting, Da Silva and Salort established sharp and improved \( C_{\text{loc}}^{\beta(p, q)} \) regularity estimates along free boundary points, which are points belonging to \(\partial \{ u > 0 \} \cap \Omega\), where the regularity exponent is explicitly given by
\[
\beta(p, q) = \frac{p}{p - 1 - q}.
\]

The precise statement of these regularity results is given by the following estimate:
\[
\sup_{B_r(x_0)} u(x) \leq \mathrm{C}_0 \max \left\{ \inf_{B_r(x_0)} u(x),\,\, r^{\beta(p, q)} \right\},
\]
for a positive universal constant \(\mathrm{C}_0\) depending on \( n, p, q, \lambda_0 \). In particular, if \( x_0 \in \partial \{ u > 0 \} \cap \Omega \) (i.e., \( x_0 \) is a free boundary point), then
\[
\sup_{B_r(x_0)} u(x) \leq \mathrm{C}_0 r^{\beta(p, q)},
\]
for all \( 0 < r < \min \{1, \text{dist}(x_0, \partial \Omega)/2 \} \).

Additionally, several geometric and measure-theoretic properties, such as non-degeneracy, uniform positive density, and porosity of the free boundary, are also addressed in \cite{daSS18}. As an application, a Liouville-type theorem is established for entire solutions whose growth at infinity is appropriately controlled.

Subsequently, in the manuscript \cite{daSRS19}, Da Silva \textit{et al.} investigated the regularity properties of dead core problems and their limiting behavior as \( p \to \infty \) for elliptic equations of the \( p \)-Laplacian type, where \( 2 < p < \infty \), under a strong absorption condition. Specifically, the authors consider the problem:
\begin{equation}\label{p-dead core}
\left\{
\begin{array}{rclcc}
-\Delta_p u(x) + \lambda_0(x) u^q_+(x) &= & 0 & \text{in } & \Omega, \\[0.2cm]
u(x) &=& g(x) & \text{on} & \partial \Omega,
\end{array}
\right.
\end{equation}
where \( 0 \leq q < p - 1 \), \( 0 < \inf_{\Omega} \lambda_0 \leq \lambda_0 \) is a bounded function, \( \Omega \subset \mathbb{R}^n \) is a bounded domain, and \( g \in \mathrm{C}^0(\partial \Omega) \). The PDE in \eqref{p-dead core} is characterized by a strong absorption condition, and the set \( \partial \{ u > 0 \} \cap \Omega \) constitutes the free boundary of the problem.

In this context, the authors established the strong non-degeneracy of a solution \( u \) at points in \( \{ u > 0 \} \cap \Omega \) (see \cite[Theorem 1.1]{daSRS19}). Furthermore, they provided improved regularity along the free boundary \( \partial \{ u > 0 \} \cap \Omega \) for a fixed pair \( p \in (2, \infty) \) and \( q \in [0, p - 1) \) (see \cite[Theorem 1.2]{daSRS19}).

In the manuscript \cite{daSLR21}, Da Silva \textit{et al.} investigated reaction-diffusion problems governed by the second-order nonlinear elliptic equation:
\begin{equation}\label{Eq-DC-GeneralFN}
F(x, \nabla u, D^2 u) + |\nabla u|^{\gamma} \langle \vec{b}(x), \nabla u \rangle = \lambda_0(x) u^{\mu} \chi_{\{u > 0\}}(x), \quad \text{in } \Omega,
\end{equation}
where \(\Omega \subset \mathbb{R}^n\) is a smooth, open, and bounded domain, \( g \geq 0 \), \( g \in \mathrm{C}^0(\partial \Omega) \), \(\vec{b} \in \mathrm{C}^0(\overline{\Omega}, \mathbb{R}^n)\), \( 0 \leq \mu < \gamma + 1 \) is the absorption factor, \(\lambda_0 \in \mathrm{C}^0(\overline{\Omega})\), and \( F: \Omega \times (\mathbb{R}^n \setminus \{0\}) \times \text{Sym}(n) \to \mathbb{R} \) is a second-order fully nonlinear elliptic operator with measurable coefficients that satisfies certain ellipticity and homogeneity conditions. Specifically,
\[
|\vec{\xi}|^{\gamma} \mathcal{M}^{-}_{\lambda, \Lambda}(\mathrm{X} - \mathrm{Y}) \leq F(x, \vec{\xi}, \mathrm{X}) - F(x, \vec{\xi}, \mathrm{Y}) \leq |\vec{\xi}|^{\gamma} \mathcal{M}^{+}_{\lambda, \Lambda}(\mathrm{X} - \mathrm{Y}) \quad (\text{with } \gamma > -1)
\]
for all \(\mathrm{X}, \mathrm{Y} \in \text{Sym}(n)\) (the space of \( n \times n \) symmetric matrices) with \(\mathrm{X} \geq \mathrm{Y}\), where \(\mathcal{M}^{\pm}_{\lambda, \Lambda}(\cdot)\) denote the \textit{Pucci extremal operators}, defined previously in \eqref{Pucci}.

The authors establish the sharp asymptotic behavior governing the dead core sets of non-negative viscosity solutions to \eqref{Eq-DC-GeneralFN}. Furthermore, they derive a sharp regularity estimate at free boundary points, i.e., \(\mathrm{C}_{\text{loc}}^{\frac{\gamma + 2}{\gamma + 1 - \mu}}\). Additionally, they obtain the exact rate at which the gradient decays at interior free boundary points, along with a sharp Liouville-type result (cf. \cite{Teix16} for the corresponding results in the case \(\gamma = 0\) and \(\vec{b} = \vec{0}\)).

Finally, the last relevant free boundary model to highlight concerns the \(\infty\)-dead core problem. Specifically, in \cite{ALT16}, Ara\'{u}jo \textit{et al.} focused on investigating reaction-diffusion models governed by the \(\infty\)-Laplacian operator. For \(\lambda > 0\), \(0 \leq \gamma < 3\), and \(0 < \phi \in C^0(\partial \Omega)\), let \(\Omega \subset \mathbb{R}^n\) be a bounded open domain, and define
\begin{equation}\label{EqDeadCore}
\mathcal{L}^{\gamma}_{\infty} v := \Delta_{\infty} v - \lambda \cdot (v_+)^{\gamma} = 0 \quad \text{in} \quad \Omega \quad \text{and} \quad v = \phi \quad \text{on} \quad \partial \Omega.
\end{equation}
Here, the operator \(\mathcal{L}^{\gamma}_{\infty}\) represents the \(\infty\)-diffusion operator with \(\gamma\)-strong absorption, and the constant \(\lambda > 0\) is referred to as the \textit{Thiele modulus}, which controls the ratio between the reaction rate and the diffusion-convection rate (cf. \cite{Diaz85} for related results).

We must also emphasize that a significant feature in the mathematical representation of equation \eqref{EqDeadCore} is the potential occurrence of plateaus, namely sub-regions, \textit{a priori} unknown, where the function becomes identically zero (see \cite{daSRS19}, \cite{daSS18}, and \cite{Diaz85} for corresponding quasi-linear dead core problems).

After proving the existence and uniqueness of viscosity solutions via Perron's method and a Comparison Principle (see \cite[Theorem 3.1]{ALT16}), the main result in the manuscript \cite{ALT16} guarantees that a viscosity solution is pointwise of class \( C^{\frac{4}{3-\gamma}} \) along the free boundary of the non-coincidence set, i.e., \( \partial \{ u > 0 \} \) (see \cite[Theorem 4.2]{ALT16}). This implies that the solutions grow precisely as \( \mathrm{dist}^{\frac{4}{3-\gamma}} \) away from the free boundary. Additionally, by utilizing barrier functions, the authors demonstrate that such an estimate is sharp in the sense that \( u \) separates from its coincidence region precisely as \( \mathrm{dist}^{\frac{4}{3-\gamma}} \) (see, e.g., \cite[Theorem 6.1]{ALT16}).

Furthermore, the authors established some Liouville-type results. Specifically, if \( u \) is an entire viscosity solution to
\[
\Delta_{\infty} u(x) = \lambda u^{\gamma}(x) \quad \text{in} \quad \mathbb{R}^n,
\]
with \( u(0) = 0 \), and \( u(x) = \text{o}\left(|x|^{\frac{4}{3-\gamma}}\right) \), then \( u \equiv 0 \) (see \cite[Theorem 4.4]{ALT16}).

Finally, they also addressed a refined quantitative version of the previous result. Specifically, if \( u \) is an entire viscosity solution to
\[
\Delta_{\infty} u (x) = \lambda u^{\gamma}(x) \quad \text{in} \quad \mathbb{R}^n,
\]
such that
\[
\limsup_{|x| \to \infty} \frac{u(x)}{|x|^{\frac{4}{3-\gamma}}} < \left(\frac{\lambda(3-\gamma)^4}{4^3(1+\gamma)}\right)^{\frac{1}{3-\gamma}},
\]
then \( u \equiv 0 \) (see \cite[Theorem 5.1]{ALT16}).
\medskip

In conclusion, based on the literature review presented above, to the best of our knowledge, there are no research results addressing quasi-linear models in the non-divergence form such as \eqref{Problem}. This gap in the literature serves as one of the primary motivations for studying optimal regularity estimates in non-variational dead core problems with a singular or degenerate signature.

%%%%%%%%%%%%%%%%%%%%%%%%%%%%%
%%%%%%%%%%%%%%%%%%%%%%%%%%%%%
%\section{Preliminaries}
\section{Definitions and auxiliary results}
In this section, we introduce the concept of viscosity solutions, which will serve as the framework for our analysis, and highlight some auxiliary results essential for developing our work. We denote by \( B_r = B_r(0) \) the ball of radius \( r \) centered at \( x = \vec{0} \). In what follows, we define viscosity solutions.

\begin{definition}[{\bf Viscosity Solution - case $\gamma>0$}] %\Claudemir{Corrigir a definição de solução}
An upper semicontinuous function \( u \in \mathrm{C}^0(\Omega) \) is called a viscosity subsolution (resp. supersolution) of
\begin{equation}\label{Vis1}
     |\nabla u|^{\gamma}\Delta_{p}^{\mathrm{N}} u = f(x, u(x)), \quad \text{with} \quad f \in \mathrm{C}^0(\Omega \times \mathbb{R}_+),
\end{equation}
if, whenever \( \phi \in C^{2}(\Omega) \) and \( u - \phi \) has a local maximum (resp. minimum) at \( x_{0} \in \Omega \), the following conditions are satisfied:

\begin{enumerate}
    \item If \( \nabla \phi(x_{0}) \neq 0 \), then 
    $$
      |\nabla \phi(x_0)|^{\gamma}\Delta_{p}^{\mathrm{N}} \phi(x_{0}) \geq f\big(x_{0}, \phi(x_0)\big) \quad  (\text{resp.}\,\,\,\,   |\nabla \phi(x_0)|^{\gamma}\Delta_{p}^{\mathrm{N}} \phi(x_{0})\leq f\big(x_{0}, \phi(x_0)\big) );
    $$

\item If \(\nabla \phi(x_{0}) = 0 \), then
\begin{enumerate}
\item For $p \geq 2$
$$
\Delta \phi(x_{0}) + (p - 2) \lambda_{\max}\big(D^{2} \phi(x_{0})\big) \geq f\big(x_{0}, \phi(x_0)\big)
$$
$$
(\text{resp.}\,\,\, \Delta \phi(x_{0}) + (p - 2) \lambda_{\min}\big(D^{2} \phi(x_{0})\big) \leq f\big(x_{0}, \phi(x_0)\big) 
$$
where \( \lambda_{\max}\big(D^{2} \phi(x_{0})\big) \) e \( \lambda_{\min}\big(D^{2} \phi(x_{0})\big) \) are the largest and smallest eigenvalues of the matrix \( D^{2} \phi(x_{0}) \) respectively.
\item For $1<p< 2 $
$$
\Delta \phi(x_{0}) + (p - 2) \lambda_{\min}\big(D^{2} \phi(x_{0})\big) \geq f\big(x_{0}, \phi(x_0)\big)
$$
$$
(\text{resp.}\,\,\,   \Delta \phi(x_{0}) + (p - 2) \lambda_{\max}\big(D^{2} \phi(x_{0})\big) \leq f\big(x_{0}, \phi(x_0)\big)). 
$$
\end{enumerate}
\end{enumerate}

Finally, we say that \( u \) is a solution to \eqref{Vis1} in the viscosity sense if it satisfies the conditions for both subsolutions and supersolutions. 
\end{definition}

The following definition for the case \( \gamma < 0 \) is adapted from the one introduced by Julin and Juutinen in \cite{JJ12} for the singular \( p \)-Laplacian.

\begin{definition}[{\bf \cite[Definition 2.2]{Attou18}}]\label{Def2.2} Let $\Omega \subset \mathbb{R}^n$ be a bounded domain, $1 < p < \infty$, and $-1 < \gamma < 0$. A function $u$ is a viscosity \textit{subsolution} of \eqref{Vis1} if $u \not\equiv \infty$ and if for all $\varphi \in C^2(\Omega)$ such that $u - \varphi$ attains a local maximum at $x_0$ and $\nabla \varphi(x) \neq 0$ for $x \neq x_0$, one has
\[
\lim_{r \to 0} \inf_{x \in B_r(x_0), \atop{x \neq x_0}} \left( -|\nabla \varphi(x)|^\gamma \Delta_p^{\mathrm{N}} \varphi(x) \right) \leq -f(x_0, \varphi(x_0)).
\]

A function $u$ is a viscosity \textit{supersolution} of \eqref{Vis1} if $u \not\equiv \infty$ and for all $\varphi \in C^2(\Omega)$ such that $u - \varphi$ attains a local minimum at $x_0$ and $\nabla \varphi(x) \neq 0$ for $x \neq x_0$, one has
\[
\lim_{r \to 0} \sup_{x \in B_r(x_0), \atop{x \neq x_0}}  \left( -|\nabla \varphi(x)|^\gamma \Delta_p^{\mathrm{N}} \varphi(x) \right)  \geq -f(x_0, \varphi(x_0)).
\]

We say that $u$ is a viscosity solution of \eqref{Vis1} in $\Omega$ if it is both a viscosity subsolution and a viscosity supersolution.
\end{definition}

We emphasize that Birindelli and Demengel introduced an alternative definition of solutions in \cite{BirDem, BirDem06, BirDem07}. This definition represents a variation of the standard notion of viscosity solutions for \eqref{Vis1}, which avoids the use of test functions with vanishing gradients at the testing point.

\begin{definition}[{\bf \cite[Definition 2.3]{Attou18}}]\label{Def2.3}
Let $-1 < \gamma < 0$ and $p > 1$. A lower semicontinuous function $u$ is a viscosity \textit{supersolution} of  \eqref{Vis1} in $\Omega$ if for every $x_0 \in \Omega$ one of the following conditions holds:

\begin{itemize}
    \item[i)] Either for all $\varphi \in C^2(\Omega)$ such that $u - \varphi$ has a local minimum at $x_0$ and $\nabla\varphi(x_0) \neq 0$, we have
    \[
    -|\nabla\varphi(x_0)|^\gamma \Delta_p^{\mathrm{N}} \varphi(x_0) \geq -f(x_0, \varphi(x_0)).
    \]
    \item[ii)] Or there exists an open ball $B(x_0, \delta) \subset \Omega$, $\delta > 0$, such that $u$ is constant in $B(x_0, \delta)$ and $-f(x, u) \leq 0$ for all $x \in B(x_0, \delta)$.
\end{itemize}

An upper semicontinuous function $u$ is a viscosity \textit{subsolution} of  \eqref{Vis1} in $\Omega$ if for all $x_0 \in \Omega$ one of the following conditions holds:
\begin{itemize}
    \item[i)] Either for all $\varphi \in C^2(\Omega)$ such that $u - \varphi$ attains a local maximum at $x_0$ and $\nabla\varphi(x_0) \neq 0$, we have
    \[
   -|\nabla\varphi(x_0)|^\gamma \Delta_p^{\mathrm{N}} \varphi(x_0) \leq -f(x_0, \varphi(x_0)).
    \]
    \item[ii)] Or there exists an open ball $B(x_0, \delta) \subset \Omega$, $\delta > 0$, such that $u$ is constant in $B(x_0, \delta)$ and $-f(x, u) \geq 0$ for all $x \in B(x_0, \delta)$.
\end{itemize}
\end{definition}

The following result ensures that the previous definitions for the singular scenario are equivalent.

\begin{proposition}[{\bf \cite[Proposition 2.4]{Attou18}}] Definitions \ref{Def2.2} and \ref{Def2.3} are equivalent.

\end{proposition}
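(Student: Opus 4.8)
The idea is to pin down where the two definitions can possibly differ and to reduce everything else to routine observations. First, because ``solution'' means ``sub- and supersolution'' in both frameworks, and because the substitution $u\mapsto -u$, $f(x,s)\mapsto -f(x,-s)$ preserves the structural form of \eqref{Vis1} while interchanging the two roles in a way compatible with \emph{both} Definitions \ref{Def2.2} and \ref{Def2.3} (in particular clause (ii) of Definition \ref{Def2.3} goes over to its subsolution analogue), it suffices to prove the equivalence of the two notions of \emph{supersolution}. Second, the elementary remark: if $\varphi\in C^2$ and $\nabla\varphi(x_0)\neq 0$, then $x\mapsto -|\nabla\varphi(x)|^{\gamma}\Delta_p^{\mathrm{N}}\varphi(x)$ is continuous near $x_0$, so the expression $\lim_{r\to0}\sup_{x\in B_r(x_0)\setminus\{x_0\}}\big(-|\nabla\varphi(x)|^{\gamma}\Delta_p^{\mathrm{N}}\varphi(x)\big)$ in Definition \ref{Def2.2} collapses to the pointwise value $-|\nabla\varphi(x_0)|^{\gamma}\Delta_p^{\mathrm{N}}\varphi(x_0)$ figuring in clause (i) of Definition \ref{Def2.3}. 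Hence the two definitions already coincide on test functions whose gradient does not vanish at the contact point, and the entire issue is confined to \emph{critical} contact points $x_0$, i.e.\ $\nabla\varphi(x_0)=0$.

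\noindent\textbf{Definition \ref{Def2.2} $\Rightarrow$ Definition \ref{Def2.3}.} Fix $x_0$ and $\varphi\in C^2$ with $u-\varphi$ attaining a local minimum at $x_0$ and $\nabla\varphi(x_0)\neq 0$. Since $\nabla\varphi$ is continuous and non-zero on a ball around $x_0$, I modify $\varphi$ outside a smaller ball into a $C^2$ function whose gradient never vanishes (a routine gluing), obtaining an admissible test function for Definition \ref{Def2.2}; invoking it and applying the continuity remark above yields exactly the inequality of clause (i). As clause (i) then holds at every $x_0$, $u$ is a supersolution in the sense of Definition \ref{Def2.3}. This direction is essentially immediate.

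\noindent\textbf{Definition \ref{Def2.3} $\Rightarrow$ Definition \ref{Def2.2}.} Here lies the substance. Let $\varphi\in C^2$ be such that $u-\varphi$ has a local minimum at $x_0$ and $\nabla\varphi(x)\neq 0$ for $x\neq x_0$; after the standard reductions (normalising $\varphi(x_0)=u(x_0)$ and making the minimum strict) I must establish $\limsup_{x\to x_0}\big(-|\nabla\varphi(x)|^{\gamma}\Delta_p^{\mathrm{N}}\varphi(x)\big)\ge -f(x_0,\varphi(x_0))$. If $\nabla\varphi(x_0)\neq 0$, clause (ii) of Definition \ref{Def2.3} cannot be in force at $x_0$ (it would make $u$ locally constant and hence $\varphi$ have a local maximum there, forcing $\nabla\varphi(x_0)=0$), so clause (i) applies to $\varphi$ and, with the continuity remark, finishes the argument. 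Assume therefore $\nabla\varphi(x_0)=0$ and distinguish which clause of Definition \ref{Def2.3} holds at $x_0$. If clause (i) holds, I argue by contradiction: should the asserted inequality fail by a fixed margin $\mu>0$, then $\varphi$ is a strict classical subsolution in a punctured ball of $x_0$ while $u>\varphi$ there, and the perturbation technique of \cite{JJ12} (a sliding/Jensen-type lemma) produces, for arbitrarily small $\xi$, a contact point $x_{\xi}\neq x_0$, $x_{\xi}\to x_0$, at which $u-\varphi+\langle\xi,x-x_0\rangle$ has a local minimum and $\nabla\varphi(x_{\xi})-\xi\neq 0$; applying clause (i) at $x_{\xi}$ and sending $\xi\to0$ contradicts the strict subsolution inequality. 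If clause (ii) holds at $x_0$ (so $u\equiv c$ and $-f(\cdot,c)\le 0$ near $x_0$), this perturbation necessarily fails, but the conclusion becomes direct: $\varphi$ has a local maximum at $x_0$, whence $\psi:=\varphi(x_0)-\varphi$ satisfies $\psi\ge 0$, $\psi(x_0)=0$, $D^2\psi(x_0)\ge 0$, $\nabla\psi(x)\neq 0$ for $x\neq x_0$, and $-|\nabla\varphi|^{\gamma}\Delta_p^{\mathrm{N}}\varphi=|\nabla\psi|^{\gamma}\Delta_p^{\mathrm{N}}\psi$, while $-f(x_0,\varphi(x_0))=-f(x_0,c)\le 0$; so it suffices to check $\limsup_{x\to x_0}|\nabla\psi(x)|^{\gamma}\Delta_p^{\mathrm{N}}\psi(x)\ge 0$. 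If $D^2\psi(x_0)\neq 0$, the Pucci sub-estimate $\mathcal{M}^{-}_{\lambda^{\mathrm{N}}_p,\Lambda^{\mathrm{N}}_p}(D^2\psi)\le\Delta_p^{\mathrm{N}}\psi$ together with $D^2\psi(x_0)\ge 0$ gives $\Delta_p^{\mathrm{N}}\psi(x)\ge\delta_0>0$ near $x_0$, so, as $\gamma<0$ and $\nabla\psi(x)\to0$, the product tends to $+\infty$. If $D^2\psi(x_0)=0$, then either $\Delta_p^{\mathrm{N}}\psi(x_k)\ge 0$ along some $x_k\to x_0$ (and there the product is $\ge 0$), or $\Delta_p^{\mathrm{N}}\psi<0$ throughout a punctured ball of $x_0$ --- but the latter is ruled out for a nonnegative $C^2$ function vanishing together with its full Hessian at its interior minimum $x_0$, by comparison with the radial $\Delta_p^{\mathrm{N}}$-harmonic profiles centred at $x_0$.

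\noindent\textbf{Main difficulty.} The essential work sits in the critical case $\nabla\varphi(x_0)=0$ of the implication Definition \ref{Def2.3} $\Rightarrow$ Definition \ref{Def2.2}: producing the non-critical contact points $x_{\xi}$ in the clause-(i) subcase --- the one point where a genuine perturbation lemma is needed, and precisely the step that breaks down when $u$ is locally constant, which is why clause (ii) of Definition \ref{Def2.3} is included --- and, in the clause-(ii) subcase, excluding $\Delta_p^{\mathrm{N}}\psi<0$ on a punctured neighbourhood when $D^2\psi(x_0)=0$ (where the blow-up $|\nabla\psi|^{\gamma}\to+\infty$ competes with $\Delta_p^{\mathrm{N}}\psi\to 0$). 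Everything else --- the symmetry reduction, the collapse of the $\limsup$ by continuity, and the gluings --- is routine.
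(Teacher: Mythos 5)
The paper does not prove this proposition: it is imported verbatim as \cite[Proposition 2.4]{Attou18}, whose proof in turn adapts the Julin--Juutinen perturbation lemma \cite{JJ12} for the singular $p$-Laplacian. So there is no in-paper proof to match against, and the only question is whether your sketch captures the standard argument. It largely does --- the symmetry reduction to supersolutions, the observation that the $\limsup$ in Definition~\ref{Def2.2} collapses to a pointwise value when $\nabla\varphi(x_0)\neq 0$, the perturbation by linear slopes in the critical case, and the barrier/degeneracy analysis in the locally-constant case are all the right moves. Two places, however, are written more confidently than they deserve.

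First, in the clause-(i) subcase, your perturbation step asserts ``a contact point $x_\xi\neq x_0$ \ldots at which $\nabla\varphi(x_\xi)-\xi\neq 0$.'' Neither part is automatic. The minimizer $x_\xi$ of $u-\varphi+\langle\xi,\cdot-x_0\rangle$ may coincide with $x_0$ (it is fine when it does: then the test gradient is $-\xi\neq 0$), and when $x_\xi\neq x_0$ there is nothing a priori preventing $\nabla\varphi(x_\xi)=\xi$, since $\nabla\varphi(x_\xi)\to 0$ as well. Avoiding that coincidence --- choosing $\xi$ so that the perturbed test function has non-vanishing gradient at its contact point --- is exactly the (measure-theoretic / Sard-type) content of \cite[Lemma~2.2]{JJ12}; you invoke it by name, which is acceptable, but the claim ``$x_\xi\neq x_0$'' should simply be dropped since it is false in general and unnecessary. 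A related smaller point: the ``standard reduction to a strict minimum'' is not entirely innocuous here, because subtracting $\varepsilon|x-x_0|^{2k}$ perturbs $\nabla\varphi$ near $x_0$, and if $\nabla\varphi$ vanishes to high order there this can destroy the hypothesis $\nabla\varphi(x)\neq 0$ for $x\neq x_0$; one should either choose the correction power to dominate the order of vanishing of $\nabla\varphi$, or bypass strictness altogether and work directly with the linear perturbations.

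Second, in the clause-(ii) subcase with $D^2\psi(x_0)=0$, your exclusion of the alternative $\Delta_p^{\mathrm{N}}\psi<0$ on a punctured ball ``by comparison with the radial $\Delta_p^{\mathrm{N}}$-harmonic profiles'' is sketchier than it needs to be (the radial profiles degenerate differently for $p<n$, $p=n$, $p>n$, and the argument through an annulus with shrinking inner radius needs care). The cleaner route, which is what the structure of your $\psi$ is begging for, is: since $\psi\ge 0$, $\psi(x_0)=0$, and $\nabla\psi(x_0)=D^2\psi(x_0)=0$, the constant $0$ is an admissible test function touching $\psi$ from below at $x_0$ and satisfies the $\Delta_p^{\mathrm{N}}$-supersolution test there; combined with $\Delta_p^{\mathrm{N}}\psi\le 0$ classically off $x_0$, $\psi$ is then a viscosity supersolution of $\Delta_p^{\mathrm{N}}\psi=0$ in the full ball, hence (via Theorem~\ref{Thm-Equiv-Sol} and the strong minimum principle) identically zero, contradicting $\nabla\psi\neq 0$ off $x_0$. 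This is both shorter and free of case splits.

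Everything else --- the $u\mapsto -u$, $f\mapsto -f(\cdot,-\cdot)$ symmetry being compatible with clause~(ii), the collapse of the $\limsup$ by continuity away from critical points, the exclusion of clause~(ii) when the test gradient at $x_0$ is nonzero, and the Pucci lower bound $\Delta_p^{\mathrm{N}}\psi\ge\mathcal{M}^{-}$ giving blow-up of $|\nabla\psi|^{\gamma}\Delta_p^{\mathrm{N}}\psi$ when $D^2\psi(x_0)\gneq 0$ --- checks out.
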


Next, we observe that the singular case can be reformulated as the analysis of the normalized \( p \)-Laplacian problem incorporating a lower-order term. The proof follows the same ideas as the one in \cite[Lemma 2.5]{Attou18}, and for this reason, we will omit the proof here.

\begin{lemma}\label{Lemma2.5}  Let \( \gamma \in (-1,0) \) and \( p > 1 \). Assume that \( u \) is a viscosity solution to \eqref{Problem}. Then, \( u \) is a viscosity solution to  
\[
\Delta^{\mathrm{N}}_p u = f(x, u) |\nabla u|^{-\gamma} \quad \text{in} \quad \Omega.
\]
\end{lemma}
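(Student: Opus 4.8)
The plan is to verify, directly from the definitions, that $u$ meets the viscosity sub- and supersolution requirements for $\Delta_p^{\mathrm{N}} u = f(x,u)|\nabla u|^{-\gamma}$, leaning on two observations. Away from critical points of a test function the two equations differ only by the \emph{positive and finite} multiplicative factor $|\nabla\varphi(x_0)|^{\mp\gamma}$, so there the passage between the inequalities is a one-line rearrangement. Moreover --- and this is what makes the reformulation useful --- because $-\gamma>0$ the map $(x,s,\xi)\mapsto f(x,s)|\xi|^{-\gamma}$ extends continuously to $\xi=0$ with value $0$; hence at a contact point where the test function has vanishing gradient, the reformulated equation collapses to the relaxed ellipticity condition already present in the $\gamma>0$ definition, namely $\Delta\varphi(x_0)+(p-2)\lambda_{\max}(D^2\varphi(x_0))\ge 0$ for $p\ge 2$ (with $\lambda_{\min}$ in place of $\lambda_{\max}$ when $1<p<2$), and its mirror image for supersolutions.

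First I would treat the subsolution case; the supersolution case is entirely symmetric. Let $\varphi\in C^2(\Omega)$ be such that $u-\varphi$ attains a local maximum at $x_0$. If $\nabla\varphi(x_0)\neq 0$, shrink the neighborhood so that $\nabla\varphi\neq 0$ on it; then $\varphi$ is admissible in Definition \ref{Def2.2}, the inner limit there reduces to the value at $x_0$ by continuity, and multiplying the resulting inequality $|\nabla\varphi(x_0)|^{\gamma}\Delta_p^{\mathrm{N}}\varphi(x_0)\ge f(x_0,\varphi(x_0))$ by $|\nabla\varphi(x_0)|^{-\gamma}>0$ gives the desired inequality. The substantive case is $\nabla\varphi(x_0)=0$, which I would handle by a quadratic perturbation: for $\epsilon>0$ set $\varphi_\epsilon\defeq\varphi+\epsilon|x-x_0|^2$, so that $u-\varphi_\epsilon$ still has a (now strict) local maximum at $x_0$, $\nabla\varphi_\epsilon(x_0)=0$, and $D^2\varphi_\epsilon(x_0)=D^2\varphi(x_0)+2\epsilon\,\mathrm{Id}_n$ is invertible for all but countably many $\epsilon$. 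Fixing such an $\epsilon$, invertibility of $D^2\varphi_\epsilon(x_0)$ forces $\nabla\varphi_\epsilon(x)\neq 0$ for $x\neq x_0$ near $x_0$, so $\varphi_\epsilon$ is admissible in Definition \ref{Def2.2}; and since $\nabla\varphi_\epsilon(x)=D^2\varphi_\epsilon(x_0)(x-x_0)+o(|x-x_0|)$, the unit vectors $\nabla\varphi_\epsilon(x)/|\nabla\varphi_\epsilon(x)|$ accumulate at \emph{every} point of the unit sphere as $x\to x_0$.

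It follows that $\limsup_{x\to x_0}\Delta_p^{\mathrm{N}}\varphi_\epsilon(x)$ equals $\Delta\varphi_\epsilon(x_0)+(p-2)\lambda_{\max}(D^2\varphi_\epsilon(x_0))$ when $p\ge 2$, and $\Delta\varphi_\epsilon(x_0)+(p-2)\lambda_{\min}(D^2\varphi_\epsilon(x_0))$ when $1<p<2$. On the other hand, the subsolution clause of Definition \ref{Def2.2} for $\varphi_\epsilon$ reads $\limsup_{x\to x_0}\bigl(|\nabla\varphi_\epsilon(x)|^{\gamma}\Delta_p^{\mathrm{N}}\varphi_\epsilon(x)\bigr)\ge f(x_0,\varphi_\epsilon(x_0))$, a finite real number; since $|\nabla\varphi_\epsilon(x)|^{\gamma}\to +\infty$ (as $\gamma<0$), this is incompatible with $\limsup_{x\to x_0}\Delta_p^{\mathrm{N}}\varphi_\epsilon(x)<0$, so that $\limsup$ must be $\ge 0$. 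Combining with the identity above and letting $\epsilon\downarrow 0$ (using $\lambda_{\max}(A+2\epsilon\,\mathrm{Id}_n)=\lambda_{\max}(A)+2\epsilon$ and $\Delta\varphi_\epsilon(x_0)=\Delta\varphi(x_0)+2n\epsilon$) yields $\Delta\varphi(x_0)+(p-2)\lambda_{\max}(D^2\varphi(x_0))\ge 0$ for $p\ge 2$, and the analogue for $1<p<2$. As $f(x_0,\varphi(x_0))|\nabla\varphi(x_0)|^{-\gamma}=0$, this is exactly the relaxed subsolution inequality for $\Delta_p^{\mathrm{N}} u=f(x,u)|\nabla u|^{-\gamma}$ at $x_0$. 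The supersolution case repeats the argument with $\varphi_\epsilon=\varphi-\epsilon|x-x_0|^2$ and $\liminf$ replacing $\limsup$ throughout.

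The step I expect to be the main obstacle is precisely this critical-point analysis: one must legitimately reduce to test functions with non-degenerate Hessian at the contact point --- so that all gradient directions are attained in the limit --- while preserving admissibility in Definition \ref{Def2.2}, and then interchange the $\limsup$ with the sweep over directions; both are achieved by the quadratic perturbation together with the blow-up $|\nabla\varphi_\epsilon(x)|^{\gamma}\to+\infty$, which is exactly where the hypothesis $\gamma\in(-1,0)$ is used. A slicker but less self-contained route would instead pass through the Birindelli--Demengel formulation of Definition \ref{Def2.3} and invoke its equivalence with Definition \ref{Def2.2}.
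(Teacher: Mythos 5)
Your proof is correct and self-contained, and since the paper itself omits the argument (deferring to \cite[Lemma~2.5]{Attou18}), it fills a genuine gap rather than duplicating anything printed here. The two observations you isolate are exactly the right ones: away from critical points the two equations differ by the strictly positive factor $|\nabla\varphi(x_0)|^{\mp\gamma}$, and at critical points the source term $f(x_0,\varphi(x_0))|\nabla\varphi(x_0)|^{-\gamma}$ vanishes because $-\gamma>0$, so the required inequality reduces to the relaxed sign condition on $\Delta\varphi(x_0)+(p-2)\lambda_{\max/\min}(D^2\varphi(x_0))$. The quadratic perturbation $\varphi_\epsilon=\varphi\pm\epsilon|x-x_0|^2$ does double duty correctly: for all but finitely many $\epsilon$ (``countably many'' is an overstatement but harmless), $D^2\varphi_\epsilon(x_0)$ is invertible, which both makes $\varphi_\epsilon$ admissible in Definition~\ref{Def2.2} near $x_0$ and guarantees that $\nabla\varphi_\epsilon(x)/|\nabla\varphi_\epsilon(x)|$ sweeps out the full unit sphere as $x\to x_0$, pinning $\limsup_{x\to x_0}\Delta_p^{\mathrm N}\varphi_\epsilon(x)$ to $\Delta\varphi_\epsilon(x_0)+(p-2)\lambda_{\max}(D^2\varphi_\epsilon(x_0))$ for $p\ge 2$ (the analogue with $\lambda_{\min}$ for $1<p<2$). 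The blow-up $|\nabla\varphi_\epsilon(x)|^{\gamma}\to+\infty$ is exactly what turns the finite lower bound from Definition~\ref{Def2.2} into the nonnegativity of that $\limsup$, and the passage $\epsilon\downarrow 0$ is legitimate since $\lambda_{\max}(A+2\epsilon\,\mathrm{Id}_n)=\lambda_{\max}(A)+2\epsilon$. The supersolution case indeed mirrors this with the opposite perturbation sign and $\liminf$. In short: this is the argument the paper is implicitly invoking, spelled out without passing through the Birindelli--Demengel equivalence; the alternative route you flag (Definition~\ref{Def2.3} plus its equivalence with Definition~\ref{Def2.2}) would shorten the critical-point analysis but would lean on Proposition~2.4 as a black box.
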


The following lemma will be instrumental in the subsequent section regarding the proof of the improvement of flatness. It is a sort of cutting Lemma.

\begin{lemma}[{\bf Cutting Lemma, \cite[Lemma 2.6]{Attou18}}]\label{CT-Lemma}  Let \( \gamma > -1 \) and \( p > 1 \). Assume that \( w \) is a viscosity solution of  
\[
- |D w + \vec{\xi}|^{\gamma} \left(\Delta w - (p - 2) \frac{ D^2 w (D w + \vec{\xi}), (D w + \vec{\xi}) }{|D w + \vec{\xi}|^2}\right) = 0.
\]
Then, \( w \) is a viscosity solution of  
\[
-\Delta^{\mathrm{N}}_{p} (w + \vec{\xi}\cdot x)= - \Delta w - (p - 2) \frac{ D^2 w (D w + \vec{\xi}), (D w + \vec{\xi}) }{|D w + \vec{\xi}|^2} = 0.
\]
\end{lemma}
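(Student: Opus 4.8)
The plan is to recast the statement as a trivial change of variables followed by the removal (``cutting'') of the non-negative weight $|Dw+\vec\xi|^{\gamma}$ from the equation. Put $v(x):=w(x)+\vec\xi\cdot x$, so that $Dv=Dw+\vec\xi$, $D^2v=D^2w$ and $\Delta v=\Delta w$ (adding a linear function alters neither the Hessian nor its trace). In these variables the hypothesis reads exactly that $v$ is a viscosity solution of $-|Dv|^{\gamma}\Delta_p^{\mathrm{N}}v=0$, while the conclusion reads that $v$ is a viscosity solution of $-\Delta_p^{\mathrm{N}}v=0$; passing between statements about $w$ for the shifted operator and statements about $v$ for the un-shifted one is immediate from the test-function correspondence ``$\phi$ touches $v$ at $x_0$ from above (resp. below) $\iff$ $\psi:=\phi-\vec\xi\cdot x$ touches $w$ at $x_0$ from above (resp. below)'', under which the effective derivatives match: $D\psi(x_0)+\vec\xi=D\phi(x_0)$ and $D^2\psi(x_0)=D^2\phi(x_0)$. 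Thus everything reduces to showing that a viscosity solution of $-|Dv|^{\gamma}\Delta_p^{\mathrm{N}}v=0$ is a viscosity solution of $-\Delta_p^{\mathrm{N}}v=0$.

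For $\gamma\in(-1,0)$ I would simply invoke Lemma \ref{Lemma2.5} with right-hand side $f\equiv 0$: it yields directly that the translated solution $v$ satisfies $\Delta_p^{\mathrm{N}}v=0$ in the viscosity sense (the transformed right-hand side $f\,|Dv|^{-\gamma}$ being identically zero), which is the claim. For $\gamma\ge 0$ the argument is a direct case analysis at a point $x_0$ where a test function $\phi$ touches $v$. If $D\phi(x_0)\neq 0$, both equations are tested through their classical branch and the weighted inequality $-|D\phi(x_0)|^{\gamma}\Delta_p^{\mathrm{N}}\phi(x_0)\ge 0$ (resp. $\le 0$) is equivalent to $-\Delta_p^{\mathrm{N}}\phi(x_0)\ge 0$ (resp. $\le 0$), because $|D\phi(x_0)|^{\gamma}$ is finite and strictly positive. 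If $D\phi(x_0)=0$, the relevant clause of the definition of viscosity solution is the second-order one --- the items involving $\Delta\phi(x_0)+(p-2)\lambda_{\max}(D^2\phi(x_0))$ and $\Delta\phi(x_0)+(p-2)\lambda_{\min}(D^2\phi(x_0))$ --- which does not see the weight $|Dv|^{\gamma}$ at all and is therefore literally the same condition for the weighted and the unweighted equations (both with vanishing right-hand side). Hence the sub/supersolution conditions agree at every $x_0$, and the claim follows.

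The step I expect to be most delicate is this critical-point case, and in particular the configuration $\vec\xi\neq 0$ together with $D\phi(x_0)=0$: then $D\psi(x_0)=-\vec\xi\neq 0$, so the test function $\psi$ for $w$ has non-vanishing gradient at $x_0$ even though the \emph{effective} gradient $D\psi(x_0)+\vec\xi$ vanishes there. One must check that the notion of viscosity solution attached to the operator in the hypothesis treats $\{Dw+\vec\xi=0\}$, not $\{Dw=0\}$, as its singular set --- which it does by construction, being the normalized $p$-Laplacian ``centered'' at $-\vec\xi$ --- so that the matching of the ad hoc critical-point clauses (the eigenvalue inequalities when $\gamma\ge 0$, and Definitions \ref{Def2.2}--\ref{Def2.3} together with Lemma \ref{Lemma2.5} when $-1<\gamma<0$) goes through verbatim. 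Once every branch of the test-function dichotomy has been verified, $w+\vec\xi\cdot x$ is simultaneously a viscosity subsolution and supersolution of the unweighted normalized $p$-Laplace equation, which is exactly the assertion.
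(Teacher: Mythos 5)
Your proof plan is correct; note, however, that the paper does not actually prove this lemma but imports it verbatim from Attouchi--Ruosteenoja (\cite[Lemma 2.6]{Attou18}), so there is no ``paper's own proof'' to compare against. The argument you give is the standard one underlying the cited result: the affine change of variables $v=w+\vec\xi\cdot x$ turns the shifted, weighted operator into $-|Dv|^\gamma\Delta_p^{\mathrm N}v$ with the degenerate set $\{Dw=-\vec\xi\}$ correctly identified with $\{Dv=0\}$, the test-function correspondence $\psi=\phi-\vec\xi\cdot x$ is exact, and then the weight $|D\phi(x_0)|^\gamma\in(0,\infty)$ is divided out at non-critical touching points while the critical-point clauses of the definitions coincide verbatim since the right-hand side is zero. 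You are also right to flag the configuration $D\phi(x_0)=0$, $\vec\xi\neq 0$ as the place where the shift of the singular set must be tracked; your observation that the relevant definition for the hypothesis equation is, by construction, the one attached to the operator $\mathcal{O}(\eta,X)=|\eta+\vec\xi|^\gamma\operatorname{Tr}\bigl[(\mathrm{Id}_n+(p-2)\tfrac{(\eta+\vec\xi)\otimes(\eta+\vec\xi)}{|\eta+\vec\xi|^2})X\bigr]$, with its degeneracy on $\{\eta=-\vec\xi\}$, resolves it. One small caveat: the paper's Lemma \ref{Lemma2.5} is stated only for the particular right-hand side $\mathfrak{a}(x)u_+^m$ of \eqref{Problem}, so to invoke it with ``$f\equiv 0$'' in the singular regime $\gamma\in(-1,0)$ you should really cite the general statement \cite[Lemma 2.5]{Attou18} (whose proof the paper explicitly says is followed), or argue directly from Definitions \ref{Def2.2}--\ref{Def2.3}; this is cosmetic, not a gap.
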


The next result ensures that any viscosity solution to
\begin{equation}\label{Eq-Slop}
- |D w + \vec{\xi}|^{\gamma} \left(\Delta w - (p - 2) \frac{ D^2 w (D w + \vec{\xi}), (D w + \vec{\xi}) }{|D w + \vec{\xi}|^2}\right) = f(x),
\end{equation}
is locally H\"{o}lder continuous provided we are dealing with large slopes.

\begin{lemma}[{\bf \cite[Lemma 3.2]{Attou18}}]\label{Holder-Cont-Large-Slop} Let $\gamma \in (-1, \infty)$ and $p \in (1, \infty)$. Assume that $|\vec{\xi}| \geq 1$ and let $w$ be a viscosity solution to equation \eqref{Eq-Slop}. For all $r \in (0,1)$ and $\beta_{\mathrm{L}} \in (0,1)$, there exists a constant $\mathrm{C} = \mathrm{C}(p, n, \gamma, \beta) > 0$ such that
\[
\displaystyle \sup_{x, y \in B_r \atop{x \neq y}} \frac{|w(x) - w(y)|}{|x-y|^{\beta_{\mathrm{L}}}} \leq \mathrm{C} \left( \|w\|_{L^\infty(B_1)} + \|f\|_{L^\infty(B_1)}^{\frac{1}{1+\gamma}} \right). 
\]
    
\end{lemma}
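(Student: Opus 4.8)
The plan is to prove the estimate by the \emph{Ishii--Lions method} of doubling the variables, exploiting that for slopes with $|\vec{\xi}|\ge 1$ equation \eqref{Eq-Slop} is uniformly elliptic along every non-vanishing effective gradient: writing $\eta = Dw+\vec{\xi}$, equation \eqref{Eq-Slop} reads (wherever $\eta\ne 0$)
\[
-\mathrm{Tr}\!\left(\Bigl(\mathrm{Id}_n+(p-2)\tfrac{\eta\otimes\eta}{|\eta|^2}\Bigr)D^2 w\right) = f(x)\,|\eta|^{-\gamma},
\]
so the coefficient matrix has all eigenvalues in $[\lambda^{\mathrm N}_p,\Lambda^{\mathrm N}_p]$ as long as $\eta\ne 0$. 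Fix $r\in(0,1)$, set $\rho=\frac{1+r}{2}$, fix $\beta_{\mathrm L}\in(0,1)$, pick an auxiliary exponent $\theta\in(\beta_{\mathrm L},1)$, and for $x_0\in B_r$ consider
\[
\Phi(x,y)=w(x)-w(y)-L\,\omega(|x-y|)-\frac{\Gamma}{2}\bigl(|x-x_0|^2+|y-x_0|^2\bigr),\qquad \omega(s)=s^{\beta_{\mathrm L}}-c_0 s^{\theta},
\]
with $c_0$ chosen so that $\omega$ is increasing and strictly concave on $[0,\mathrm{diam}(B_1)]$, and $\Gamma=\Gamma(\rho,\|w\|_{L^\infty(B_1)})$ large enough that any interior maximum of $\Phi$ localizes in $B_\rho(x_0)\times B_\rho(x_0)\subset B_1\times B_1$. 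It suffices to show $\sup\Phi\le 0$ once $L=\mathrm C\bigl(\|w\|_{L^\infty(B_1)}+\|f\|_{L^\infty(B_1)}^{\frac1{1+\gamma}}\bigr)$ for a suitable $\mathrm C=\mathrm C(p,n,\gamma,\beta_{\mathrm L},r)$, since then $w(x)-w(y)\le L|x-y|^{\beta_{\mathrm L}}$ on $B_r$ and, by symmetry, the asserted seminorm bound follows.

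Suppose, for contradiction, that $M:=\sup\Phi>0$; then $M$ is attained at an interior point $(\bar x,\bar y)$ with $\bar x\ne\bar y$. From $M>0$ and $|w|\le\|w\|_{L^\infty(B_1)}$ one gets $L|\bar x-\bar y|^{\beta_{\mathrm L}}\lesssim \|w\|_{L^\infty(B_1)}$, hence $|\bar x-\bar y|\le\bigl(c\,\|w\|_{L^\infty(B_1)}/L\bigr)^{1/\beta_{\mathrm L}}$, which is small for $L$ large; since $\omega'$ is decreasing near $0$, this forces the test-function gradients
\[
q_x:=L\,\omega'(|\bar x-\bar y|)\tfrac{\bar x-\bar y}{|\bar x-\bar y|}+\Gamma(\bar x-x_0),\qquad
q_y:=L\,\omega'(|\bar x-\bar y|)\tfrac{\bar x-\bar y}{|\bar x-\bar y|}-\Gamma(\bar y-x_0)
\]
to satisfy $|q_x|,|q_y|\gtrsim L^{1/\beta_{\mathrm L}}$ and $|q_x-q_y|\lesssim\Gamma\rho$. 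Applying the Jensen--Ishii lemma (theorem on sums) to $\Phi$ at $(\bar x,\bar y)$ yields symmetric matrices $X,Y$ together with the matrix inequality $\mathrm{diag}(X,-Y)\le D^2\psi(\bar x,\bar y)+o(1)$, where $\psi$ is the sum of the last two terms of $\Phi$; moreover, since the effective gradients $\eta_x:=q_x+\vec{\xi}$ and $\eta_y:=q_y+\vec{\xi}$ do not vanish (which is automatic here once we note that, if $|\eta_x|$ or $|\eta_y|$ were not bounded below, we would instead be in the complementary "small effective slope" regime and could argue via a rescaling to unit slope), the equation holds in the usual pointwise viscosity form at $\bar x,\bar y$ — even under Definitions \ref{Def2.2}--\ref{Def2.3} when $\gamma<0$, since those coincide with the standard formulation at points where the effective gradient is non-zero, using the Cutting Lemma \ref{CT-Lemma} to pass between \eqref{Eq-Slop} and $-|\eta|^\gamma\NDelta(\cdot)$. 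We thus get $\mathrm{Tr}(A(\eta_x)X)\ge -f(\bar x)|\eta_x|^{-\gamma}$ and $\mathrm{Tr}(A(\eta_y)Y)\le -f(\bar y)|\eta_y|^{-\gamma}$, with $A(\eta)=\mathrm{Id}_n+(p-2)\tfrac{\eta\otimes\eta}{|\eta|^2}$.

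Subtracting the two inequalities: the Hessian of $L\,\omega(|x-y|)$ contributes, along $(\hat e,-\hat e)$ with $\hat e=(\bar x-\bar y)/|\bar x-\bar y|$, a coercive negative term of order $L\,\omega''(|\bar x-\bar y|)\lesssim -L\,|\bar x-\bar y|^{\beta_{\mathrm L}-2}$, while the localization contributes only $O(\Gamma)$; pairing against the (nearly common, the mismatch being $O(\Gamma\rho/\min(|\eta_x|,|\eta_y|))$) eigenframe of $A(\eta_x)\approx A(\eta_y)$ and using uniform ellipticity gives
\[
\mathrm{Tr}(A(\eta_x)X)-\mathrm{Tr}(A(\eta_y)Y)\;\lesssim\;-\lambda^{\mathrm N}_p\,L\,|\bar x-\bar y|^{\beta_{\mathrm L}-2}+C\Gamma .
\]
On the other hand the right-hand side is bounded below by $-2\|f\|_{L^\infty(B_1)}\bigl(|\eta_x|^{-\gamma}+|\eta_y|^{-\gamma}\bigr)$, a quantity comparable (using $|\eta_x|,|\eta_y|\sim L\,\omega'(|\bar x-\bar y|)$ in the relevant regime, together with $|\bar x-\bar y|\lesssim(\|w\|_{L^\infty(B_1)}/L)^{1/\beta_{\mathrm L}}$) to a fixed power of $L$ times $\|f\|_{L^\infty(B_1)}$. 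Since $\beta_{\mathrm L}-2<0$, the coercive term satisfies $L\,|\bar x-\bar y|^{\beta_{\mathrm L}-2}\gtrsim L\cdot L^{(2-\beta_{\mathrm L})/\beta_{\mathrm L}}=L^{2/\beta_{\mathrm L}}$ (up to $\|w\|_{L^\infty(B_1)}$-powers), and a direct comparison of the two exponents of $L$ shows that choosing $L=\mathrm C\bigl(\|w\|_{L^\infty(B_1)}+\|f\|_{L^\infty(B_1)}^{\frac1{1+\gamma}}\bigr)$, with $\mathrm C$ large enough, makes the coercive term strictly dominate both $C\Gamma$ and the right-hand side, contradicting the displayed inequality; hence $\sup\Phi\le 0$, which is the claim.

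The main obstacle is the \emph{uniform} control of the singular/degenerate ingredients: one must bound the weight $|\eta|^{-\gamma}$ and the increment $A(\eta_x)-A(\eta_y)$ by appropriate powers of $L$ valid simultaneously across all $\gamma\in(-1,\infty)$ and all $|\vec{\xi}|\ge 1$, which is precisely where the hypothesis $|\vec{\xi}|\ge 1$ and the exponent $\tfrac1{1+\gamma}$ on $\|f\|_{L^\infty(B_1)}$ enter the balance of exponents; getting this scaling right in every regime of $\gamma$ (and disposing of the borderline case where the effective gradient at the doubling maximum is \emph{not} bounded below, by passing to the complementary small-slope analysis) is the delicate point, the rest being the routine bookkeeping of the Jensen--Ishii lemma.
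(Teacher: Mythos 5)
The paper does not prove this lemma --- it is quoted verbatim as \cite[Lemma 3.2]{Attou18} --- so your proposal is assessed against the method of that reference. Your Ishii--Lions doubling argument is the right method and is essentially the one used in \cite{Attou18}: the concave comparison function $\omega$, the localization penalty, the Jensen--Ishii matrix inequality, the uniform ellipticity of $A(\eta)$, and the competition between the coercive term $L\,\omega''$ and the forcing $\|f\|_{L^\infty}|\eta|^{-\gamma}$ all appear there; and the exponent bookkeeping does yield $\|f\|_{L^\infty}^{1/(1+\gamma)}$ exactly as you assert once one checks that $t=|\bar x-\bar y|=(\|w\|_{L^\infty}/L)^{1/\beta_{\mathrm L}}$ is the worst case (since the exponent $\beta_{\mathrm L}(1+\gamma)-(2+\gamma)$ of $t$ in $L^{1+\gamma}t^{(\beta_{\mathrm L}-2)+\gamma(\beta_{\mathrm L}-1)}\gtrsim\|f\|_{L^\infty}$ is negative).

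The one genuine gap is where you assert that the effective gradients $\eta_x=q_x+\vec{\xi}$ and $\eta_y=q_y+\vec{\xi}$ are ``automatic[ally]'' bounded away from zero, appealing parenthetically to the ``complementary small effective slope regime.'' These two things are unrelated. The hypothesis $|\vec{\xi}|\le 1$ of Lemma \ref{Holder-Cont-mall-Slop} is a constraint on the \emph{fixed} vector $\vec{\xi}$, not on the gradient of the test function at the doubling maximum; and in the present lemma $|\vec{\xi}|\ge 1$ carries no upper bound, so the leading part $L\,\omega'(|\bar x-\bar y|)\hat e$ of $q_x$ can have magnitude comparable to $|\vec{\xi}|$ with $\hat e\approx -\vec{\xi}/|\vec{\xi}|$, making $|\eta_x|$ arbitrarily small or even zero without violating any hypothesis. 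You also cannot ``rescale to unit slope'': rescaling $\vec{\xi}$ forces a simultaneous rescaling of $w$ and $f$, which destroys the very normalization you are trying to establish uniformly in $\vec{\xi}$. For $\gamma\in(-1,0)$ this is harmless since $|\eta|^{-\gamma}=|\eta|^{|\gamma|}\to 0$, but for $\gamma>0$ the pointwise inequality $\mathrm{Tr}(A(\eta_x)X)\ge -f(\bar x)|\eta_x|^{-\gamma}$ becomes useless as $|\eta_x|\to 0$ and is ill-posed at $\eta_x=0$, where one must instead invoke the relaxed viscosity Definitions \ref{Def2.2}--\ref{Def2.3} (via the translation of Lemma \ref{CT-Lemma}) and extract only eigenvalue information from $X$. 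Either you show $|\eta_x|,|\eta_y|\gtrsim\max\{1,L\,\omega'\}$ by a geometric argument specific to the doubling configuration, or you run a separate comparison in the degenerate regime; this is precisely the step that takes real work in \cite{Attou18}, and an outline that declares it automatic is not yet a proof.
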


The next result provides a uniform H\"{o}lder estimate to \eqref{Eq-Slop} for small slopes.

\begin{lemma}[{\bf \cite[Lemma 3.3]{Attou18}}]\label{Holder-Cont-mall-Slop}
For all $r \in (0,1)$, there exist a constant $\beta_{\mathrm{S}} = \beta_{\mathrm{S}}(p, n) \in (0,1)$ and a positive constant $\mathrm{C}_0 = \mathrm{C}_0(p, n, r, \operatorname{osc}_{B_1}(w), \|f\|_{L^n(B_1)})$ such that any viscosity solution $w$ of \eqref{Eq-Slop} with $|\vec{\xi}| \leq 1$ satisfies

\[
[w]_{C^{0,\beta_{\mathrm{S}}}(B_r)} \leq \mathrm{C}_0 .
\]
\end{lemma}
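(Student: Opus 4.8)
\medskip
\noindent The plan is to absorb the slope $\vec\xi$ by an affine change of the unknown, thereby reducing the claim to an interior H\"older estimate for the normalized $p$-Laplacian with a gradient weight, and then to prove that estimate by the Ishii--Lions doubling-of-variables technique, using only the uniform (Pucci) ellipticity of $\NDelta$ and the smallness $|\vec\xi|\le 1$. \emph{Step 1 (absorbing the slope and extracting Pucci inequalities).} Put $u(x):=w(x)+\vec\xi\cdot x$. Since $|\vec\xi|\le 1$ we have $\|u-w\|_{L^\infty(B_1)}\le 1$, hence $\operatorname{osc}_{B_1}u\le\operatorname{osc}_{B_1}w+2$ and $[w]_{C^{0,\beta_{\mathrm S}}(B_r)}\le[u]_{C^{0,\beta_{\mathrm S}}(B_r)}+1$, so it suffices to bound $[u]_{C^{0,\beta_{\mathrm S}}(B_r)}$. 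A computation with admissible test functions (the inhomogeneous version of the Cutting Lemma \ref{CT-Lemma}, combined with Definitions \ref{Def2.2}--\ref{Def2.3} when $\gamma<0$) shows that $u$ is a viscosity solution of $|\nabla u|^{\gamma}\NDelta u=h$ in $B_1$ with $h=\mp f$, so $\|h\|_{L^n(B_1)}=\|f\|_{L^n(B_1)}=:K$; for $\gamma<0$ we further use Lemma \ref{Lemma2.5} to rewrite this as $\NDelta u=h|\nabla u|^{-\gamma}$, whose gradient term is sublinear. From $\mathcal M^{-}_{\lambda^{\mathrm N}_p,\Lambda^{\mathrm N}_p}(D^2u)\le\NDelta u\le\mathcal M^{+}_{\lambda^{\mathrm N}_p,\Lambda^{\mathrm N}_p}(D^2u)$ it follows that $u$ is a viscosity subsolution of $|\nabla u|^{\gamma}\mathcal M^{+}_{\lambda^{\mathrm N}_p,\Lambda^{\mathrm N}_p}(D^2u)\ge-|h|$ and a viscosity supersolution of $|\nabla u|^{\gamma}\mathcal M^{-}_{\lambda^{\mathrm N}_p,\Lambda^{\mathrm N}_p}(D^2u)\le|h|$. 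This isolates the only structure used below: uniform ellipticity plus a gradient weight of exponent $\gamma>-1$.

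\emph{Step 2 (Ishii--Lions doubling).} Fix $0<r<\rho<1$ and $x_0\in B_r$, and take a concave $\mathrm C^2$ function $\omega$ with $\omega(0)=0$, $\omega'(0^{+})=+\infty$, $\omega$ increasing, and $\omega(s)=s^{\beta_{\mathrm S}}$ for small $s$. For parameters $L,M\gg1$ set
\[
\Phi(x,y):=u(x)-u(y)-L\,\omega(|x-y|)-\tfrac{M}{2}\bigl(|x-x_0|^{2}+|y-x_0|^{2}\bigr)
\]
on $\overline{B}_\rho\times\overline{B}_\rho$. If $L$ is chosen so that $L\,\omega(s)>2\operatorname{osc}_{B_1}u+2$ for $s\gtrsim\rho-r$, and $M$ so that the quadratic term confines any interior maximum near $(x_0,x_0)$, then proving $\Phi\le0$ on $\overline{B}_\rho\times\overline{B}_\rho$ yields $|u(x)-u(x_0)|\le CL\,|x-x_0|^{\beta_{\mathrm S}}$ near $x_0$, hence the estimate with $\mathrm C_0=\mathrm C_0(n,p,r,\operatorname{osc}_{B_1}(w),K)$. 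Assume, for contradiction, that $\Phi$ has a positive maximum at an interior $(\bar x,\bar y)$; then $\bar x\ne\bar y$. The Crandall--Ishii lemma produces jets $(\zeta_x,X)$, $(\zeta_y,Y)$ of $u$ at $\bar x$, $\bar y$ with $\zeta_x=L\,\omega'(|\bar x-\bar y|)e+M(\bar x-x_0)$, $\zeta_x-\zeta_y=M\bigl((\bar x-x_0)+(\bar y-x_0)\bigr)$ (where $e=(\bar x-\bar y)/|\bar x-\bar y|$), and a matrix inequality controlling $X$ and $-Y$ by $L\,\omega''(|\bar x-\bar y|)$ times the standard rank-one block plus a multiple of $\mathrm{Id}_{2n}$. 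Since $|x_0-\bar x|,|x_0-\bar y|<1$ and $\omega'$ is bounded below on bounded sets, taking $L$ large relative to $M$ forces $|\zeta_x|,|\zeta_y|\ge1$ and $|\zeta_x|/|\zeta_y|$ uniformly close to $1$ --- the step where $|\vec\xi|\le1$ is used. Plugging the jets into the Pucci inequalities of Step 1, dividing by $|\zeta_x|^{\gamma}$ and $|\zeta_y|^{\gamma}$ (legitimate by the previous sentence), subtracting, and using ellipticity together with the matrix inequality to bound $\mathcal M^{+}(X)-\mathcal M^{-}(Y)\le-c\,L\,|\omega''(|\bar x-\bar y|)|+C(M+1)$, one obtains
\[
c\,L\,\bigl|\omega''(|\bar x-\bar y|)\bigr|\le C(M+1)+C\,K
\]
(with $K$ replaced, in the $L^{n}$ setting, by a local $L^{n}$-average of $h$ through the ABP refinement of this inequality). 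Because $|\omega''(s)|\to+\infty$ as $s\to0^{+}$, fixing $\beta_{\mathrm S}=\beta_{\mathrm S}(n,p)$ small and then $L$ large in terms of $n,p,\operatorname{osc}_{B_1}(w),K$ contradicts this bound --- unless $|\bar x-\bar y|\gtrsim\rho-r$, in which case $\Phi(\bar x,\bar y)>0$ is already excluded by the choice of $L$. Hence $\Phi\le0$, which proves the lemma.

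\emph{Main obstacle.} The crux is the interplay between the degenerate/singular weight $|\nabla u|^{\gamma}$ and the Crandall--Ishii framework: one must ensure that along the doubling the effective gradients $\zeta_x,\zeta_y$ stay large and mutually comparable, so that the two viscosity inequalities can be divided by the weights without spoiling ellipticity; this dictates the choice $\omega'(0^{+})=+\infty$ and genuinely uses $|\vec\xi|\le1$ (for large slopes the vector $L\,\omega'e+\vec\xi$ may nearly cancel, which is precisely why that regime needs the separate treatment of Lemma \ref{Holder-Cont-Large-Slop}). Two further technical points: the hypothesis $f\in L^{n}$ forces one to work with $L^{n}$-viscosity solutions and to replace pointwise viscosity inequalities by their ABP-type counterparts, which is exactly what makes $\mathrm C_0$ depend on $\|f\|_{L^{n}(B_1)}$; and when $\gamma<0$ one must admit test functions with vanishing gradient (Definition \ref{Def2.3}), an issue here bypassed via Lemma \ref{Lemma2.5}.
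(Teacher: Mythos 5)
First, note that the paper does not actually prove this lemma: it is imported verbatim from Attouchi--Ruosteenoja \cite[Lemma~3.3]{Attou18} and used as a black box, so the comparison is really against the proof in that reference. Two features of the statement tell you which machinery is behind it: the constant $\mathrm C_0$ depends on $\|f\|_{L^n(B_1)}$ rather than $\|f\|_{L^\infty(B_1)}$, and $\beta_{\mathrm S}$ depends only on $p$ and $n$. Both are hallmarks of the Caffarelli/Krylov--Safonov route (ABP maximum principle plus measure-theoretic cube decomposition), where one shows that $w$ (equivalently $u=w+\vec\xi\cdot x$) belongs to the Pucci classes $\underline S\bigl(\lambda^{\mathrm N}_p,\Lambda^{\mathrm N}_p,g\bigr)$ and $\overline S\bigl(\lambda^{\mathrm N}_p,\Lambda^{\mathrm N}_p,g\bigr)$ for a forcing $g$ built from $f$, and then invokes the interior $C^{0,\alpha}$ estimate for $L^n$-viscosity solutions. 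Your Ishii--Lions doubling is a genuinely different route, and the gap in it is exactly where you wave at the $L^n$ dependence: plugging the jets of $\Phi$ at $(\bar x,\bar y)$ into the PDE produces the pointwise values $f(\bar x)$ and $f(\bar y)$, so the final inequality $cL|\omega''(|\bar x-\bar y|)|\le C(M+1)+CK$ comes out with $K=\|f\|_{L^\infty}$, not $\|f\|_{L^n}$. The parenthetical ``$K$ replaced by a local $L^n$-average through the ABP refinement of this inequality'' is not a step; there is no off-the-shelf ABP version of a doubling inequality, and obtaining one would amount to re-deriving the Krylov--Safonov bound you are trying to avoid. Your argument, if completed carefully, would prove a weaker statement with $\|f\|_{L^\infty}$ in place of $\|f\|_{L^n}$ (which, incidentally, would suffice for how the present paper uses the lemma, since the right-hand side $\zeta^2\mathfrak a u_+^m$ there is bounded), but it does not prove the lemma as stated.

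Two further points worth flagging. First, the passage from the degenerate equation to the Pucci inequalities is itself the subtle part of the small-slope case: when $\gamma>0$ and a test function $\phi$ touches $u$ at a point with $|\nabla\phi(x_0)|$ small, the relation $|\nabla\phi(x_0)|^{\gamma}\NDelta\phi(x_0)=f(x_0)$ does \emph{not} directly yield a bound on $\NDelta\phi(x_0)$; one needs the Imbert--Silvestre--type argument (or the separate viscosity alternative of Definition \ref{Def2.3} when $\gamma<0$) to land in the Pucci classes, and your Step~1 glosses over this. Second, after setting $u=w+\vec\xi\cdot x$ the slope $\vec\xi$ has disappeared from the equation, so attributing the near-equality of $|\zeta_x|$ and $|\zeta_y|$ to the hypothesis $|\vec\xi|\le1$ is a misattribution. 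The hypothesis $|\vec\xi|\le1$ is spent entirely in Step~1 to control $\operatorname{osc}_{B_1}u$ in terms of $\operatorname{osc}_{B_1}w$; inside the doubling, the comparability of the jet gradients is governed only by the choice of $\omega$ and the relative sizes of $L$ and $M$.
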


For a small \(\gamma > 0\) and \(p > 1\) close to 2, Attouchi and Ruosteenoja established a local \(W^{2,2}\) estimate. We utilize this estimate to derive an average estimate (in the \(L^2\)-sense) for solutions of \eqref{Problem} along free boundary points.

\begin{theorem}[{\bf \cite[Theorem 1.3]{Attou18}}]\label{Hes-est} Assume that $f \in W^{1,1}(\Omega) \cap L^\infty(\Omega) \cap \mathrm{C}^0(\Omega)$ and $\gamma \in (0, \zeta]$, where $\zeta \in (0,1)$ and 
\[
1 > \zeta + \sqrt{n} |p - 2 - \gamma| + \zeta (p - 2 - \gamma)_+.
\]
Then, any viscosity solution $u$ of 
$$
-|\nabla u|^{\gamma}\Delta_p^{\mathrm{N}} u = f(x) \quad \text{in} \quad \Omega
$$
belongs to $W_{\text{loc}}^{2,2}(\Omega)$, and for any $\Omega'' \subset\subset \Omega' \subset\subset \Omega$, we have

\[
\|u\|_{W^{2,2}(\Omega'')} \leq \mathrm{C}(p,n,\gamma,\zeta, d_{\Omega}, d'', \|u\|_{L^\infty(\Omega)}, \|f\|_{L^\infty(\Omega)}, \|f\|_{W^{1,1}(\Omega)}), \tag{1.4}
\]
where \( d_{\Omega} = \mathrm{diam}(\Omega) \) and \( d'' = \mathrm{dist}(\Omega', \partial \Omega') \).
\end{theorem}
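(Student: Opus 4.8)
\textbf{Proof strategy for Theorem \ref{Hes-est}.} The plan is to prove the estimate first as an a priori bound for smooth solutions of a regularized, nondegenerate equation, and then pass to the limit; the heart of the matter is an integrated (Caccioppoli-type) second-derivative estimate whose closure depends \emph{exactly} on the structural inequality $1 > \zeta + \sqrt{n}\,|p-2-\gamma| + \zeta(p-2-\gamma)_+$, which acts as a Cordes-type condition. Fix $\Omega'' \Subset \Omega' \Subset \Omega$. For $\varepsilon,\delta \in (0,1)$ let $f_\delta = f * \rho_\delta$ be a standard mollification, so $\|f_\delta\|_{L^\infty} \le \|f\|_{L^\infty}$ and $f_\delta \to f$ in $W^{1,1}_{\mathrm{loc}}$ and locally uniformly, and solve the uniformly elliptic, smooth, nondegenerate Dirichlet problem
\[
(\varepsilon^2 + |\nabla v|^2)^{\gamma/2}\left(\Delta v + (p-2)\frac{\langle D^2 v\,\nabla v,\nabla v\rangle}{\varepsilon^2 + |\nabla v|^2}\right) = -f_\delta \ \text{ in } \Omega', \qquad v = u \ \text{ on } \partial\Omega',
\]
which by classical quasilinear theory has a unique solution $v_{\varepsilon,\delta} \in \mathrm{C}^\infty(\Omega')\cap\mathrm{C}^0(\overline{\Omega'})$. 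Two uniform bounds are needed: the comparison principle yields $\|v_{\varepsilon,\delta}\|_{L^\infty}\le \|u\|_{L^\infty(\Omega)}+\mathrm{C}\|f\|_{L^\infty}$, and the interior $\mathrm{C}^{1,\alpha}$ theory for normalized $p$-Laplacian-type equations (recalled in the Introduction, via \cite{Attou18}) gives a local Lipschitz bound $\|\nabla v_{\varepsilon,\delta}\|_{L^\infty(\Omega')}\le \mathrm{L}$, with $\mathrm{L}$ independent of $\varepsilon,\delta$. Stability of viscosity solutions gives $v_{\varepsilon,\delta}\to u$ locally uniformly; hence it suffices to bound $\|D^2 v_{\varepsilon,\delta}\|_{L^2(\Omega'')}$ uniformly in $\varepsilon,\delta$.

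Write $v=v_{\varepsilon,\delta}$, $v_k=\partial_k v$, and set $A^{ij}(\nabla v)=(\varepsilon^2+|\nabla v|^2)^{\gamma/2}\big(\delta_{ij}+(p-2)\tfrac{v_iv_j}{\varepsilon^2+|\nabla v|^2}\big)$, a symmetric matrix with smallest eigenvalue $\ge \min\{1,p-1\}(\varepsilon^2+|\nabla v|^2)^{\gamma/2}$. Differentiating the regularized equation in $x_k$ gives a linear equation $A^{ij}\partial_{ij}v_k = -\partial_k f_\delta - (\partial_{\xi_l}G_\varepsilon)\,\partial_l v_k$, where $G_\varepsilon$ denotes the regularized operator and $\partial_{\xi_l}G_\varepsilon$ is linear in $D^2 v$ with $|\partial_{\xi_l}G_\varepsilon|\le \mathrm{C}(|\gamma|+|p-2|)(\varepsilon^2+|\nabla v|^2)^{(\gamma-1)/2}|D^2 v|$. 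Pick a cutoff $\eta\in\mathrm{C}^\infty_c(\Omega')$ with $\eta\equiv 1$ on $\Omega''$ and $|\nabla\eta|\lesssim \mathrm{dist}(\Omega'',\partial\Omega')^{-1}$, multiply by $v_k\eta^2$, sum over $k$, and integrate by parts. Ellipticity of $A$ produces on the left the coercive quantity $\min\{1,p-1\}\int(\varepsilon^2+|\nabla v|^2)^{\gamma/2}|D^2 v|^2\eta^2$, while the right-hand side splits into: (a) a cutoff commutator term, absorbed by Young's inequality at the cost of $\mathrm{C}(\mathrm{L},\gamma,\|\nabla\eta\|_{L^\infty})$; (b) the source term $\sum_k\int(\partial_k f_\delta)v_k\eta^2$, bounded outright by $\mathrm{L}\,\|\nabla f_\delta\|_{L^1(\Omega')}\lesssim \|f\|_{W^{1,1}(\Omega)}$ (this is precisely why only $W^{1,1}$ regularity of $f$ is required); and (c) the two genuinely nonlinear terms $\sum_k\int(\partial_j A^{ij})v_k\partial_{ik}v\,\eta^2$ and $\sum_k\int(\partial_{\xi_l}G_\varepsilon)\partial_l v_k\,v_k\,\eta^2$, which are, after Cauchy–Schwarz, of the same order $\int(\varepsilon^2+|\nabla v|^2)^{\gamma/2}|D^2 v|^2\eta^2$ as the coercive left-hand side.

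The crucial bookkeeping is in (c): computing $\partial_j A^{ij}$ and $\partial_{\xi_l}G_\varepsilon$ explicitly, their combined contribution to the coefficient of $\int(\varepsilon^2+|\nabla v|^2)^{\gamma/2}|D^2 v|^2\eta^2$ on the right is bounded by a constant $\kappa(n,p,\gamma)$ for which $\kappa(n,p,\gamma)<\min\{1,p-1\}$ holds exactly under the hypothesis $\zeta+\sqrt{n}\,|p-2-\gamma|+\zeta(p-2-\gamma)_+<1$ (using $\gamma\le\zeta$ to bound the $\gamma$-dependent pieces, with the $\sqrt{n}$ arising from Cauchy–Schwarz over the index sums). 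Hence this term is absorbed, yielding a uniform weighted estimate $\int(\varepsilon^2+|\nabla v_{\varepsilon,\delta}|^2)^{\gamma/2}|D^2 v_{\varepsilon,\delta}|^2\eta^2\le \mathrm{C}$ with $\mathrm{C}=\mathrm{C}(n,p,\gamma,\zeta,d_\Omega,d'',\|u\|_{L^\infty},\|f\|_{L^\infty},\|f\|_{W^{1,1}})$; combining it with the uniform gradient bound $\mathrm{L}$ (splitting $\Omega''$ into $\{|\nabla v|>\theta\}$, where the weight is $\ge\theta^\gamma$, and $\{|\nabla v|\le\theta\}$, where the normalized equation controls $D^2 v$ in $L^2$, then optimizing in $\theta$) upgrades it to $\|D^2 v_{\varepsilon,\delta}\|_{L^2(\Omega'')}\le\mathrm{C}$, uniformly in $\varepsilon,\delta$. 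Finally, weak compactness in $W^{2,2}(\Omega'')$ together with the local uniform convergence $v_{\varepsilon,\delta}\to u$ identifies the weak limit with $u$ in $W^{2,2}(\Omega'')$ and transfers the bound, completing the proof.

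\emph{Main obstacle.} The delicate step is the absorption in (c): forcing the constant $\kappa(n,p,\gamma)$ produced by the two nonlinear terms to be strictly below the ellipticity constant, which requires tracking every structural constant explicitly and is precisely what the hypothesis $\zeta+\sqrt{n}\,|p-2-\gamma|+\zeta(p-2-\gamma)_+<1$ encodes. A secondary technical difficulty is passing from the weighted bound $\int (\varepsilon^2+|\nabla v|^2)^{\gamma/2}|D^2 v|^2\eta^2\le\mathrm{C}$ to a genuine $L^2$ bound on $D^2 v$, i.e.\ handling the degeneracy of the weight $(\varepsilon^2+|\nabla v|^2)^{\gamma/2}$ near the (a priori unknown) critical set of $v$; this is where the standing assumption $\gamma>0$ small (via $\gamma\le\zeta<1$) is again used.
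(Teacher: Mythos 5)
Note first that, in this manuscript, Theorem~\ref{Hes-est} is imported verbatim from \cite[Theorem 1.3]{Attou18} and is stated without proof, so there is no ``paper's own proof'' here to compare against; what follows assesses your proposal on its own merits. Your overall scaffolding (regularize, obtain uniform $L^\infty$ and $\mathrm{C}^{1,\alpha}$ bounds, differentiate, test, absorb via a Cordes-type structural inequality, pass to the limit) points in the right general direction, but the final step does not close. Testing the differentiated equation with $v_k\eta^2$ yields, after integration by parts, the coercive term $\int A^{ij}\partial_i v_k\partial_j v_k\,\eta^2\gtrsim\min\{1,p-1\}\int(\varepsilon^2+|\nabla v|^2)^{\gamma/2}|D^2 v|^2\eta^2$; since $\gamma>0$, this is a genuinely \emph{weighted} estimate whose weight vanishes as $|\nabla v|,\varepsilon\to 0$. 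Your proposed upgrade (split $\Omega''$ into $\{|\nabla v|>\theta\}$ and $\{|\nabla v|\le\theta\}$ and ``optimize in $\theta$'') fails: on the first set one gets only $\int_{\{|\nabla v|>\theta\}}|D^2 v|^2\eta^2\le\mathrm{C}\theta^{-\gamma}$, which diverges as $\theta\to 0$, and on the second set the regularized equation reads $\Delta v+(p-2)\frac{\langle D^2 v\,\nabla v,\nabla v\rangle}{\varepsilon^2+|\nabla v|^2}=-f_\delta(\varepsilon^2+|\nabla v|^2)^{-\gamma/2}$, whose right side is of order $\varepsilon^{-\gamma}$ on $\{\nabla v=0\}$, so the equation gives no $\varepsilon$-uniform control there — and a single scalar combination of Hessian entries never bounds $|D^2 v|$ in any case. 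The claim that ``the normalized equation controls $D^2 v$ in $L^2$'' on $\{|\nabla v|\le\theta\}$ is therefore unsubstantiated and, as stated, false. A workable route is to remove the weight \emph{before} integrating by parts — e.g.\ multiply the differentiated equation by $v_k\eta^2(\varepsilon^2+|\nabla v|^2)^{-\gamma/2}$, so the coercive quadratic form comes from the bounded matrix $\mathrm{Id}_n+(p-2)\frac{\nabla v\otimes\nabla v}{\varepsilon^2+|\nabla v|^2}$, and bound the source contribution using $|v_k|(\varepsilon^2+|\nabla v|^2)^{-\gamma/2}\le|\nabla v|^{1-\gamma}\le\mathrm{L}^{1-\gamma}$, which is exactly where $\gamma\le\zeta<1$ is needed — followed by a re-accounting of the extra commutators the deweighting creates. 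You flag the weighted-to-unweighted passage as a ``secondary technical difficulty,'' but for $\gamma>0$ it is the heart of the matter, and the proposal as written does not overcome it.

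A second, lesser issue is the claimed absorption inequality $\kappa(n,p,\gamma)<\min\{1,p-1\}$. The hypothesis of the theorem has the form $\zeta+\sqrt n\,|p-2-\gamma|+\zeta(p-2-\gamma)_+<1$, with $1$ on the right, not $\min\{1,p-1\}$. Take $n=2$, $p=3/2$ and $\gamma=\zeta$ small: the hypothesis holds with room to spare (the left side is about $0.73$), yet $\sqrt n|p-2-\gamma|\approx 0.72>\tfrac 12=\min\{1,p-1\}$, so any $\kappa$ that dominates $\sqrt n|p-2-\gamma|$ (as your Cauchy--Schwarz bookkeeping suggests) cannot be absorbed by the $\min\{1,p-1\}$-coercivity of $A$. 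The Cordes mechanism has to be organized around the Laplacian (ellipticity constant $1$, with $\sqrt n=\|\mathrm{Id}_n\|_F$ arising as its Frobenius norm), not around the worst-case eigenvalue of the degenerate matrix $A$; as written, the absorption is attached to the wrong coercivity constant, and the asserted inequality does not follow from the stated hypothesis.
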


The following result, established by Siltakoski, provides a fundamental connection between viscosity solutions of the normalized \( p \)-Laplacian and weak solutions of the \( p \)-Laplacian.

\begin{theorem}[{\bf \cite[Theorem 5.9]{Siltak18}}]\label{Thm-Equiv-Sol} A function \( u \) is a viscosity solution to  
\[
- \Delta^{\mathrm{N}}_{p} u = 0 \quad \text{in } \quad \Omega
\]  
if and only if it is a weak solution to  
\[
- \Delta_{p} u = 0 \quad \text{in } \quad \Omega.
\]
    
\end{theorem}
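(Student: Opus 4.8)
The plan is to reduce the statement to the classical theory of the \emph{non-normalized} $p$-Laplacian, using as a bridge the elementary pointwise identity
\[
\Delta_p v \;=\; \operatorname{div}\!\big(|\nabla v|^{p-2}\nabla v\big) \;=\; |\nabla v|^{p-2}\,\Delta^{\mathrm{N}}_p v, \qquad \text{for } v\in C^2 \text{ and wherever } \nabla v\neq 0,
\]
which shows that, away from critical points, the equations $-\Delta_p u=0$ and $-\Delta^{\mathrm{N}}_p u=0$ carry the same information up to the strictly positive factor $|\nabla v|^{p-2}$; thus the entire subtlety is concentrated at points where the gradient vanishes. We would combine this with the classical \emph{equivalence of viscosity and weak solutions for $-\Delta_p u=0$} (Juutinen--Lindqvist--Manfredi; see also the streamlined argument of \cite{JJ12}), which we treat as a black box, together with the $C^{1,\alpha}_{\mathrm{loc}}$ regularity of $p$-harmonic functions.

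First we would check that \emph{every viscosity solution of $-\Delta^{\mathrm{N}}_p u=0$ is a viscosity solution of $-\Delta_p u=0$}. Let $\phi\in C^2$ touch $u$ from below at $x_0$ (the supersolution case; the subsolution case is symmetric). If $\nabla\phi(x_0)\neq 0$, multiplying the inequality $\Delta^{\mathrm{N}}_p\phi(x_0)\leq 0$ by $|\nabla\phi(x_0)|^{p-2}>0$ gives $\Delta_p\phi(x_0)\leq 0$, which is exactly the required inequality. If $\nabla\phi(x_0)=0$, then for $p>2$ the $\Delta_p$-viscosity requirement at such a test point is vacuous (it reduces to $0\leq 0$), while for $1<p\leq 2$ it coincides verbatim with the condition $\Delta\phi(x_0)+(p-2)\,\lambda_{\max}\!\big(D^2\phi(x_0)\big)\leq 0$ imposed by the definition of viscosity supersolution of $-\Delta^{\mathrm{N}}_p u=0$. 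So this inclusion is mere bookkeeping; combining it with the cited equivalence, every viscosity solution of $-\Delta^{\mathrm{N}}_p u=0$ is a weak solution of $-\Delta_p u=0$.

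For the converse we would start from a weak solution $u$ of $-\Delta_p u=0$; by classical regularity $u\in C^{1,\alpha}_{\mathrm{loc}}(\Omega)$, and by the cited equivalence $u$ is also a viscosity solution of $-\Delta_p u=0$. Take $\phi\in C^2$ touching $u$ from below at $x_0$; then $\nabla\phi(x_0)=\nabla u(x_0)$. If $\nabla\phi(x_0)\neq 0$, dividing $\Delta_p\phi(x_0)\leq 0$ by $|\nabla\phi(x_0)|^{p-2}$ yields $\Delta^{\mathrm{N}}_p\phi(x_0)\leq 0$, as needed; and for $1<p\leq 2$ the critical-gradient case is again immediate, since there the $\Delta_p$- and $\Delta^{\mathrm{N}}_p$-viscosity conditions at a critical test point coincide. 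The one genuinely nontrivial case is $p>2$ with $\nabla\phi(x_0)=0$: here we argue by contradiction. Suppose
\[
\Delta\phi(x_0)+(p-2)\,\lambda_{\min}\!\big(D^2\phi(x_0)\big) \;=\; \min_{|e|=1}\operatorname{Tr}\!\big[\big(\mathrm{Id}_n+(p-2)\,e\otimes e\big)D^2\phi(x_0)\big] \;>\;0.
\]
By continuity this inequality persists, for every unit vector $e$, on a small ball $B_\rho(x_0)$; evaluating at $e=\nabla\phi(x)/|\nabla\phi(x)|$ where $\nabla\phi(x)\neq 0$, and using the continuous extension $\Delta_p\phi(x):=0$ at the critical points of $\phi$ (legitimate since $p>2$), one obtains $\Delta_p\phi\geq 0$ throughout $B_\rho(x_0)$, so $\phi$ is a weak subsolution of $-\Delta_p\cdot=0$ there, lying below the $p$-harmonic function $u$ and touching it at the interior point $x_0$. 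Invoking the comparison principle for $-\Delta_p\cdot=0$ then forces $\phi\equiv u$ on $B_\rho(x_0)$, hence $\phi$ is $p$-harmonic---which is impossible, because $\Delta_p\phi>0$ at the points of $B_\rho(x_0)$ where $\nabla\phi\neq 0$, and such points exist (if $\phi$ were constant on $B_\rho(x_0)$ then $D^2\phi(x_0)=0$, contradicting the displayed strict inequality). This closes the argument and, together with the first step, yields the asserted equivalence.

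The main obstacle is precisely this last touching argument at a critical test point with $p>2$: the normalized notion imposes a \emph{strictly stronger} pointwise constraint on $D^2\phi(x_0)$ than the $\Delta_p$ notion does, so it cannot be obtained for free, and one must first extract the $C^1$-regularity of weak solutions and then run a comparison argument in which the critical set is handled with care (this is the technical heart of \cite{Siltak18}). Underneath everything lies the Juutinen--Lindqvist--Manfredi equivalence between viscosity and weak solutions of $-\Delta_p u=0$, whose proof (via $\inf$- and $\sup$-convolution regularizations together with Caccioppoli-type estimates) is itself substantial; we would simply cite it, noting that the paper already invokes \cite{JJ12} for the companion singular notions.
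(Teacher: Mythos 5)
The paper does not prove this statement; it is imported verbatim as \cite[Theorem 5.9]{Siltak18}, so there is no ``paper's own proof'' to compare against. Judged on its own merits, your strategy is the natural one (the multiplicative identity $\Delta_p v = |\nabla v|^{p-2}\Delta_p^{\mathrm{N}} v$ away from critical points, the Juutinen--Lindqvist--Manfredi/Julin--Juutinen equivalence for $-\Delta_p u=0$ as a black box, and $\mathrm{C}^{1,\alpha}_{\mathrm{loc}}$ regularity), and it is essentially the skeleton of Siltakoski's argument. But two steps you present as immediate are in fact the crux of that proof and, as written, contain gaps.

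First, for $1<p\le 2$ with $\nabla\phi(x_0)=0$ you assert that the $\Delta_p$-viscosity test ``coincides verbatim'' with the $\Delta_p^{\mathrm{N}}$-viscosity test. It does not: the non-normalized $p$-Laplacian is \emph{singular} in this range, so the accepted viscosity notions (Julin--Juutinen lim-inf/lim-sup over nearby gradients, or the Birindelli--Demengel convention that ignores test functions with vanishing gradient) are formulated completely differently from the uniformly-elliptic-type condition $\Delta\phi(x_0)+(p-2)\lambda_{\max}(D^2\phi(x_0))\le 0$ that appears for $\Delta_p^{\mathrm{N}}$. Showing one implies the other is a genuine piece of work, not a tautology, and it goes in both directions of your equivalence.

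Second, in the $p>2$ critical-test-point case your contradiction step invokes ``the comparison principle'' to deduce $\phi\equiv u$ on $B_\rho(x_0)$ from $\phi$ being a subsolution touching the supersolution $u$ from below at an interior point. The weak comparison principle gives only $\phi\le u$, which you already have, and the strong comparison principle for the degenerate $p$-Laplacian is not available here: $\phi$ is not a uniformly strict subsolution near $x_0$, because $\nabla\phi(x_0)=0$ forces $\Delta_p\phi(x_0)=0$ under the continuous extension you are using, so the strictness degenerates precisely at the touching point. One cannot ``linearize'' $u-\phi$ either, since $\nabla u(x_0)=0$. The standard way to close this (and what Siltakoski actually does) is to perturb the test function---e.g.\ subtract a term like $\varepsilon|x-x_0|^{q}$ with $q$ chosen so that the perturbation leaves the critical Hessian inequality strict but makes the touching strict away from $x_0$---and then apply the \emph{weak} comparison principle on a small ball to reach a contradiction. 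As written, your argument asserts the conclusion of that perturbation step without supplying it.
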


The next result addresses gradient estimates for quasilinear models with bounded forcing terms.

\begin{theorem}[{\bf \cite[Theorem 1.1]{Attou18} and \cite[Theorem 1.1]{WYF2025}}]\label{Thm-grad-est}
Let \( \gamma > -1 \), \( p > 1 \), and \( f \in L^\infty(\Omega) \cap C^{0}(\Omega) \) be H\"{o}lder continuous when \( \gamma < 0 \). Then, there exists \( \alpha = \alpha(p, n, \gamma) > 0 \) such that any viscosity solution \( u \) of $|\nabla u|^{\gamma} \Delta_p^{\mathrm{N}} u = f(x)$ in $\Omega$ belongs to \( C^{1, \alpha}_{\text{loc}}(\Omega) \), and for any \( \Omega' \subset\subset \Omega \),
\[
[u]_{C^{1, \alpha}(\Omega')} \leq \mathrm{C} = \mathrm{C}(p, n, d
_{\Omega}, \gamma, d', \|u\|_{L^\infty(\Omega)}, \|f\|_{L^\infty(\Omega)}),
\]
where \( d_{\Omega} = \mathrm{diam}(\Omega) \) and \( d' = \mathrm{dist}(\Omega', \partial \Omega) \).

Particularly, we have the following gradient estimate:
$$
\|\nabla u\|_{L^\infty(\Omega')} \leq \mathrm{C}(p, n, d
_{\Omega}, \gamma, d')\left( \|u\|_{L^\infty(\Omega)} + \|f\|^{\frac{1}{\gamma+1}}_{L^\infty(\Omega)}\right).
$$
\end{theorem}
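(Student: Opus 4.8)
Here is how I would prove the stated interior $\mathrm C^{1,\alpha}$ regularity together with the gradient estimate.

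\medskip

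\noindent\textbf{Step 1: normalization and the Lipschitz bound.} Since $\NDelta$ is $0$-homogeneous in $\nabla u$ and $1$-homogeneous in $D^2 u$, the rescaling $v:=u/K$ with $K:=\|u\|_{L^\infty(\Omega)}+\|f\|_{L^\infty(\Omega)}^{1/(\gamma+1)}$ turns the equation into $|\nabla v|^{\gamma}\NDelta v = K^{-(\gamma+1)}f$ with $\|v\|_{L^\infty}\le 1$ and $\|K^{-(\gamma+1)}f\|_{L^\infty}\le 1$. Thus it suffices to produce a universal Lipschitz bound $\|\nabla v\|_{L^\infty(B_{1/2})}\le \mathrm C$ for solutions with $\|v\|_{L^\infty(B_1)},\|f\|_{L^\infty(B_1)}\le 1$; undoing the scaling gives $\|\nabla u\|_{L^\infty(\Omega')}\le \mathrm C K$, which is exactly the displayed gradient estimate. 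For that universal bound I would run the Ishii–Lions doubling-of-variables argument: maximize $\Phi(x,y)=v(x)-v(y)-\mathrm L\,\varphi(|x-y|)-\mathrm M\big(|x-x_0|^2+|y-x_0|^2\big)$ over $\overline B_{3/4}\times\overline B_{3/4}$ for a suitable concave modulus $\varphi$ with $\varphi(0)=0$, and show that for $\mathrm L$ universally large the maximum is attained on the diagonal. At an off-diagonal interior maximizer the theorem of sums furnishes matrices controlling $D^2 v$ at two nearby points whose admissible test-gradients are $\pm\,\mathrm L\varphi'(|x-y|)\nu$, of size $\gtrsim 1$, so the weight $|\nabla\cdot|^{\gamma}$ is pinched between positive universal powers of $\mathrm L$ and the Pucci bounds $\mathcal M^-_{\lambda_p^{\mathrm N},\Lambda_p^{\mathrm N}}\le\NDelta\le\mathcal M^+_{\lambda_p^{\mathrm N},\Lambda_p^{\mathrm N}}$, together with the viscosity inequalities, force a contradiction — exactly as in the uniformly elliptic theory. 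For $-1<\gamma<0$ I would first apply Lemma \ref{Lemma2.5} to recast the equation as $\NDelta u = f(x)|\nabla u|^{-\gamma}$, the H\"older hypothesis on $f$ making this gradient-dependent source admissible.

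\medskip

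\noindent\textbf{Step 2: $\mathrm C^{1,\alpha}$ by affine approximation.} Fix $x_0\in B_{1/2}$ and work with the normalized solution. I would show there are universal $\rho,\alpha\in(0,1)$, $\mathrm C\ge1$ and affine maps $\ell_k(x)=a_k+\mathbf b_k\cdot(x-x_0)$ with
\[
\sup_{B_{\rho^k}(x_0)}|u-\ell_k|\le\rho^{k(1+\alpha)},\qquad |a_{k+1}-a_k|\le\mathrm C\rho^{k(1+\alpha)},\qquad |\mathbf b_{k+1}-\mathbf b_k|\le\mathrm C\rho^{k\alpha}.
\]
Telescoping gives $\nabla u(x_0):=\lim_k\mathbf b_k$ with $|\nabla u(x_0)-\mathbf b_k|\lesssim\rho^{k\alpha}$; since the construction is uniform in $x_0$, the Campanato characterization upgrades this to $\nabla u\in\mathrm C^{0,\alpha}_{\loc}$ with the quantitative bound, and rescaling by $K$ closes the general case. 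For the inductive step, rescale $w(x):=\rho^{-k(1+\alpha)}(u-\ell_k)(x_0+\rho^k x)$ on $B_1$; then $|w|\le1$ and $w$ solves
\[
-\,|\nabla w+\vec\xi|^{\gamma}\Big(\Delta w+(p-2)\big\langle D^2 w\,\tfrac{\nabla w+\vec\xi}{|\nabla w+\vec\xi|},\tfrac{\nabla w+\vec\xi}{|\nabla w+\vec\xi|}\big\rangle\Big)=\tilde f\quad\text{in }B_1
\]
with $\vec\xi=\rho^{-k\alpha}\mathbf b_k$ and $\|\tilde f\|_{L^\infty(B_1)}\le\rho^{k(1-\alpha)}$, which is as small as we wish for $k$ large. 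So it suffices to prove: if $\|w\|_{L^\infty(B_1)}\le1$ solves the above with $\|\tilde f\|_{L^\infty}\le\delta$ small (and $|\vec\xi|$ constrained as the iteration dictates), then $w$ lies within $\tfrac14\rho^{1+\alpha}$ of an affine function on $B_\rho$, with controlled slope increment.

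\medskip

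\noindent\textbf{Step 3: the improvement, via a large-slope/small-slope dichotomy.} \emph{Large-slope regime} ($|\vec\xi|$ above a universal threshold, which together with the inductive control of $\nabla u$ keeps $|\nabla w+\vec\xi|$ bounded above and away from $0$ on $B_{1/2}$): the operator is then uniformly elliptic, and by the Cutting Lemma (Lemma \ref{CT-Lemma}) $w$ solves $\NDelta(w+\vec\xi\cdot x)=\tilde f$, a quasilinear uniformly elliptic equation in non-divergence form with gradient-continuous coefficients that are a controlled perturbation of a constant elliptic matrix; Lemma \ref{Holder-Cont-Large-Slop} supplies the a priori H\"older control needed to enter the Caffarelli/Schauder $\mathrm C^{1,\alpha}$ theory, which yields the affine approximation on $B_\rho$ directly. \emph{Small-slope regime} ($|\vec\xi|\lesssim1$): argue by compactness — Lemma \ref{Holder-Cont-mall-Slop} (and, in the intermediate range, Lemma \ref{Holder-Cont-Large-Slop}) gives a uniform H\"older modulus for $w$, so letting $\delta\to0$ and the slopes converge, stability of viscosity solutions under the degenerate/singular operator yields a limit $w_\infty$ solving, via Lemma \ref{CT-Lemma}, $\NDelta(w_\infty+\vec\xi_\infty\cdot x)=0$ with $|\vec\xi_\infty|\lesssim1$; if $\vec\xi_\infty\neq0$ this is uniformly elliptic quasilinear, hence $\mathrm C^{1,\alpha_0}_{\loc}$, while if $\vec\xi_\infty=0$ then $w_\infty$ is a viscosity solution of $\NDelta w_\infty=0$, hence by Theorem \ref{Thm-Equiv-Sol} a weak $p$-harmonic function, hence $\mathrm C^{1,\alpha_0}_{\loc}$ by the classical DiBenedetto–Tolksdorf–Uraltseva–Lewis theory — in every case with a universal estimate. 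Choosing $\alpha<\alpha_0$ and $\rho$ small so the $\mathrm C^{1,\alpha_0}$ Taylor remainder of $w_\infty$ beats $\tfrac14\rho^{1+\alpha}$, and then $\delta$ small depending on $(\rho,\alpha)$, the affine approximation of $w_\infty$ transfers to $w$, closing Step 2.

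\medskip

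\noindent\textbf{Main obstacle.} The crux is that no single elliptic-regularity theorem applies uniformly across scales, because the weight $|\nabla u|^{\gamma}$ degenerates ($\gamma>0$) or blows up ($\gamma<0$) precisely where $\nabla u$ vanishes. The dichotomy is engineered to localize this difficulty: for large slopes the problem is genuinely uniformly elliptic, whereas for small slopes one must pass to the limit and land on the normalized $p$-harmonic equation, whose $\mathrm C^{1,\alpha}$ regularity is itself a deep ingredient. Making the small-slope compactness rigorous is the delicate point — it needs the a priori H\"older estimates of Lemmas \ref{Holder-Cont-Large-Slop}–\ref{Holder-Cont-mall-Slop} for precompactness, a stability statement for viscosity solutions of the degenerate/singular operator under uniform convergence of $\tilde f$ and of the slopes, and, for $\gamma<0$, the standing H\"older continuity of $f$ that legitimizes the Lemma \ref{Lemma2.5} reformulation used throughout; keeping the thresholds for the slope dichotomy universal and compatible with the scales $\rho^k$ is the main bookkeeping burden.
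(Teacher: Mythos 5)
This theorem is one of several preparatory results that the paper imports verbatim from the literature --- the manuscript explicitly labels it as \cite[Theorem 1.1]{Attou18} together with \cite[Theorem 1.1]{WYF2025} and does not supply its own proof. There is therefore no ``paper's own proof'' to measure you against; the appropriate yardstick is the argument of Attouchi--Ruosteenoja, and your sketch is a faithful, essentially correct reconstruction of it: normalize and reduce the gradient bound to a universal Lipschitz estimate via the Ishii--Lions doubling argument (for which the weight \(|\nabla\cdot|^{\gamma}\) is controlled because the relevant test gradients are bounded away from zero), then run an improvement-of-flatness iteration with the large-slope/small-slope dichotomy, invoking the Cutting Lemma (Lemma~\ref{CT-Lemma}) to reduce to the unweighted normalized \(p\)-Laplacian, the two H\"older lemmas (Lemmas~\ref{Holder-Cont-Large-Slop} and~\ref{Holder-Cont-mall-Slop}) for precompactness uniformly in the slope, and the Siltakoski equivalence (Theorem~\ref{Thm-Equiv-Sol}) plus classical \(p\)-harmonic \(C^{1,\alpha}\) regularity for the limit equation.

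Two small remarks, both bookkeeping rather than gaps. First, when \(\gamma<0\) the definition of viscosity solution itself excludes test functions with vanishing gradient (Definitions~\ref{Def2.2}--\ref{Def2.3}), so the Ishii--Lions step and the stability-under-limits step need the equivalence of those definitions and the reformulation of Lemma~\ref{Lemma2.5}; you flag this, and you should also note (per the paper's own Remark after Theorem~\ref{Thm-grad-est}) that the H\"older hypothesis on \(f\) in the singular case can in fact be dropped. Second, in Step~2 the slope increment bound \(|\mathbf b_{k+1}-\mathbf b_k|\le\mathrm C\rho^{k\alpha}\) must be proved at the same time as the sup bound --- it comes out of the \(C^{1,\alpha_0}\) estimate for the limit profile \(w_\infty\) in the compactness step --- and the inductive control of \(|\vec\xi_k|=\rho^{-k\alpha}|\mathbf b_k|\) must be made compatible with the Lipschitz bound of Step~1, which is precisely the ``bookkeeping burden'' you acknowledge. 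With those details filled in the argument closes; as a blind sketch it correctly identifies every ingredient used in the cited reference.
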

\begin{remark}
 In {\bf \cite[Theorem 1.1]{Attou18}}, after the proof of the theorem, there exists a note where the authors show that in the case $-1<\gamma<0$, the Hölder regularity assumption of $f$ can be removed. 
\end{remark}
\subsection*{Existence/Uniqueness of viscosity solutions}

To formulate our existence and uniqueness result in a general framework, we introduce a Hamiltonian term. Specifically, let \( \mathscr{H}: \Omega \times \mathbb{R}^n \to \mathbb{R} \) be a continuous function satisfying the following properties:
\begin{itemize}
    \item[(\textbf{H1})] \( \mathscr{H}(x, \vec{\xi}) \leq \mathrm{C}(1 + |\vec{\xi}|^\kappa) \) for some \( \kappa \in (0, 1 + \gamma] \) and all \( (\vec{\xi}, x) \in \mathbb{R}^n \times \Omega \) (for \(\gamma > -1\));
    \item[(\textbf{H2})] \( |\mathscr{H}(x, \vec{\xi}) - \mathscr{H}(y, \vec{\xi})| \leq \omega(|x - y|)(1 + |\vec{\xi}|^\kappa) \), where \( \omega: [0, \infty) \to [0, \infty) \) is a modulus of continuity, i.e., an increasing function with \( \omega(0) = 0 \).
\end{itemize}

Therefore, we study viscosity solutions to equations of the form
\[
|\nabla v|^{\gamma} \Delta_p^{\mathrm{N}} v + \mathscr{H}(x, \nabla v) + f_0(x, v) = 0 \quad \text{in} \quad \Omega, \quad \text{and} \quad v = g \quad \text{on} \quad \partial \Omega, \tag{2.1}
\]
where \( f_0 \) and \( g \) are assumed to be continuous.

The archetypal model we consider is
\[
\mathcal{Q}^{\mathrm{N}}_{p, \gamma} v \coloneqq |\nabla v|^{\gamma} \Delta_p^{\mathrm{N}} v + |\nabla v|^{\gamma} \vec{\mathfrak{B}}(x) \cdot \nabla v = \mathfrak{a}(x) v_+^m(x) + \mathfrak{h}(x),
\]
where we assume conditions \((\mathrm{H}0)\)--\((\mathrm{H}2)\) and \(\mathrm{A}\ref{Assumption_lambda}\), with \(\mathfrak{h} \in \mathrm{C}^0(\Omega)\) and \(\vec{\mathfrak{B}} \in \mathrm{C}^0(\overline{\Omega}; \mathbb{R}^n)\).

For our next result, we will require the notion of superjets and subjets, as introduced by Crandall, Ishii, and Lions in \cite{CIL}.

\begin{definition}
    A second-order superjet of \( u \) at \( x_0 \in \Omega \) is defined as
\[
\mathcal{J}^{2,+}_{\Omega} u(x_0) = \left\{ (\nabla \phi(x_0), D^2 \phi(x_0)) : \phi \in C^2 \text{ and } u - \phi \text{ attains a local maximum at } x_0 \right\}.
\]
The closure of a superjet is given by
\begin{eqnarray*}
\overline{\mathcal{J}}^{2,+}_{\Omega} u(x_0) = \left\{ (\vec{\xi}, \mathrm{X}) \in \mathbb{R}^n \times \text{Sym}(n): \exists\,\, (\vec{\xi}_k, \mathrm{X}_k) \in \mathcal{J}^{2,+}_{\Omega} u(x_k) \text{ such that}\right.\\ 
\left.(x_k, u(x_k), \vec{\xi}_k, \mathrm{X}_k) \to (x_0, u(x_0), \vec{\xi}, \mathrm{X}) \right\}.
\end{eqnarray*}
%where \( \mathcal{S}^{d \times d} \) denotes the space of symmetric \( d \times d \) matrices.
Similarly, we can define the second-order subjet and its closure.
\end{definition}

The following result is fundamental for establishing the existence of viscosity solutions to our problem and for deriving certain weak geometric properties in subsequent sections. It represents a more generalized formulation, which will prove instrumental for our objectives in future research.

\begin{lemma}[\bf Comparison Principle]\label{Comp-Princ} Let $\mathfrak{c}, \mathfrak{h}
_1, \mathfrak{h}_2\in \mathrm{C}^0(\overline{\Omega})$, and let $\mathfrak{F}:\R \to \R$ be a continuous and increasing function satisfying $\mathfrak{F}(0)=0$. Suppose that 
\begin{equation}\label{CP}
\left\{
\begin{array}{cccccccc}
|\nabla u_1|^{\gamma} \Delta_p^{\mathrm{N}} u_1 +\mathscr{H}(x,\nabla u_1)+ \mathfrak{c}(x) \mathfrak{F}(u_1) & \geq & \mathfrak{h}_1(x) &\text{in}& \Omega,\\[0.2cm]
|\nabla u_2|^{\gamma} \Delta_p^{\mathrm{N}} u_2 +\mathscr{H}(x,\nabla u_2) +  \mathfrak{c}(x) \mathfrak{F}(u_2) & \leq & \mathfrak{h}_2(x)  &\text{in} & \Omega,
    \end{array}
\right.
\end{equation}
in the viscosity sense. Furthermore, assume that  $u_2 \geq u_1$ on $\partial \Omega$ and one of the following holds:
\begin{enumerate}
\item[$\textbf{(a)}$] If $\mathfrak{c}<0$ in $\overline{\Omega}$ and $h_1\geq h_2$ in $\overline{\Omega}$,
\item[$\textbf{(b)}$] If $\mathfrak{c}\leq 0$ in $\overline{\Omega}$ and $h_1>h_2$ in $\overline{\Omega}$.
\end{enumerate}
Then, $u_2 \geq u_1$ in $\Omega$.
\end{lemma}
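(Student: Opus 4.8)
The plan is to follow the classical Jensen--Ishii doubling-of-variables machinery for viscosity solutions of second-order equations, adapting it to the quasi-linear operator $|\nabla\cdot|^{\gamma}\Delta_p^{\mathrm{N}}(\cdot)$ and exploiting the fact that $u_1-u_2$ is a subsolution of a degenerate/singular equation whose zeroth-order term is monotone. First I would argue by contradiction: suppose $M\defeq\sup_{\overline\Omega}(u_1-u_2)>0$; since $u_2\ge u_1$ on $\partial\Omega$, this supremum is attained at an interior point. For $j\in\mathbb{N}$ large I would double the variables, setting
\[
\Phi_j(x,y)=u_1(x)-u_2(y)-\frac{j}{2}|x-y|^{2},
\]
let $(x_j,y_j)$ be a maximizer of $\Phi_j$ over $\overline\Omega\times\overline\Omega$, and recall the standard facts: $j|x_j-y_j|^2\to 0$, $x_j,y_j\to \hat x$ an interior maximum point of $u_1-u_2$ with $(u_1-u_2)(\hat x)=M>0$, and hence $u_1(\hat x)>u_2(\hat x)$. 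Set $\vec\xi_j\defeq j(x_j-y_j)$.

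Next I would invoke the Theorem on Sums (Crandall--Ishii, as referenced in \cite{CIL}): there exist symmetric matrices $X_j,Y_j$ with $(\vec\xi_j,X_j)\in\overline{\mathcal{J}}^{2,+}_{\Omega}u_1(x_j)$, $(\vec\xi_j,Y_j)\in\overline{\mathcal{J}}^{2,-}_{\Omega}u_2(y_j)$, and
\[
-3j\begin{pmatrix} \mathrm{Id} & 0\\ 0 & \mathrm{Id}\end{pmatrix}\leq \begin{pmatrix} X_j & 0\\ 0 & -Y_j\end{pmatrix}\leq 3j\begin{pmatrix} \mathrm{Id} & -\mathrm{Id}\\ -\mathrm{Id} & \mathrm{Id}\end{pmatrix},
\]
which in particular forces $X_j\le Y_j$. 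Here one must handle the two regimes of $\gamma$: when $\gamma>0$ the equation degenerates at $\vec\xi_j=0$, and one splits into the case $\vec\xi_j\ne 0$ (where the operator is a continuous fully nonlinear operator and the jet relations plug in directly) and the case $\vec\xi_j=0$ (where one uses the definition of viscosity solution at points of vanishing gradient together with $X_j\le Y_j$ to conclude $\mathrm{Tr}(X_j)+(p-2)\lambda_{\max}(X_j)\le \mathrm{Tr}(Y_j)+(p-2)\lambda_{\min}(Y_j)$ via the Pucci bounds); when $-1<\gamma<0$ one should instead use Lemma~\ref{Lemma2.5} to rewrite the equations with the singular weight moved to the right-hand side, or argue via Definitions~\ref{Def2.2}--\ref{Def2.3} so that only test functions with non-vanishing gradient at the contact point are relevant, which circumvents the singularity. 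In all cases, writing $\mathscr{A}(\vec\xi)=\mathrm{Id}+(p-2)\frac{\vec\xi}{|\vec\xi|}\otimes\frac{\vec\xi}{|\vec\xi|}\in\mathcal{A}_{\lambda_p^{\mathrm{N}},\Lambda_p^{\mathrm{N}}}$, the subsolution/supersolution inequalities give
\[
|\vec\xi_j|^{\gamma}\mathrm{Tr}(\mathscr{A}(\vec\xi_j)X_j)+\mathscr{H}(x_j,\vec\xi_j)+\mathfrak{c}(x_j)\mathfrak{F}(u_1(x_j))\ge \mathfrak{h}_1(x_j),
\]
\[
|\vec\xi_j|^{\gamma}\mathrm{Tr}(\mathscr{A}(\vec\xi_j)Y_j)+\mathscr{H}(y_j,\vec\xi_j)+\mathfrak{c}(y_j)\mathfrak{F}(u_2(y_j))\le \mathfrak{h}_2(y_j).
\]
Subtracting and using $X_j\le Y_j$ (so $\mathrm{Tr}(\mathscr{A}(\vec\xi_j)(X_j-Y_j))\le 0$) yields
\[
\mathfrak{c}(x_j)\mathfrak{F}(u_1(x_j))-\mathfrak{c}(y_j)\mathfrak{F}(u_2(y_j))\ge \mathfrak{h}_1(x_j)-\mathfrak{h}_2(y_j)-\big(\mathscr{H}(x_j,\vec\xi_j)-\mathscr{H}(y_j,\vec\xi_j)\big).
\]

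To close the argument I would pass to the limit $j\to\infty$. By (H2) and $j|x_j-y_j|^2\to 0$ one has $|\vec\xi_j|\,|x_j-y_j|=j|x_j-y_j|^2\to 0$; a short case analysis (whether $\vec\xi_j$ stays bounded or not, using $\kappa\le 1+\gamma$ and $\omega$ being a modulus of continuity) shows $\mathscr{H}(x_j,\vec\xi_j)-\mathscr{H}(y_j,\vec\xi_j)\to 0$, and by continuity $\mathfrak{h}_1(x_j)-\mathfrak{h}_2(y_j)\to \mathfrak{h}_1(\hat x)-\mathfrak{h}_2(\hat x)$. On the left, continuity of $\mathfrak{c},\mathfrak{F},u_1,u_2$ gives the limit $\mathfrak{c}(\hat x)\big(\mathfrak{F}(u_1(\hat x))-\mathfrak{F}(u_2(\hat x))\big)$. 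Since $u_1(\hat x)>u_2(\hat x)$ and $\mathfrak{F}$ is strictly increasing, $\mathfrak{F}(u_1(\hat x))-\mathfrak{F}(u_2(\hat x))>0$; in case \textbf{(a)} we have $\mathfrak{c}(\hat x)<0$, so the left side is strictly negative, while $\mathfrak{h}_1(\hat x)-\mathfrak{h}_2(\hat x)\ge 0$, a contradiction; in case \textbf{(b)} we have $\mathfrak{c}(\hat x)\le 0$, so the left side is $\le 0$, while $\mathfrak{h}_1(\hat x)-\mathfrak{h}_2(\hat x)>0$, again a contradiction. Hence $M\le 0$, i.e. $u_2\ge u_1$ in $\Omega$.

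\emph{Main obstacle.} The delicate point is the behavior of the operator at points where the test gradient vanishes, which is exactly where $|\vec\xi|^{\gamma}\Delta_p^{\mathrm{N}}$ fails to be a well-defined continuous fully nonlinear operator: for $\gamma>0$ it degenerates and for $\gamma<0$ it is genuinely singular. Handling the event $\vec\xi_j=0$ (or $\vec\xi_j\to 0$ with $\gamma<0$) rigorously — showing that the appropriate one-sided inequalities from the definition of viscosity sub/supersolution still combine, via the matrix inequality $X_j\le Y_j$ and the Pucci sandwich for $\Delta_p^{\mathrm{N}}$, to yield the desired comparison — is the technical heart of the proof. A clean way around it for $\gamma<0$ is to reduce via Lemma~\ref{Lemma2.5} to the normalized $p$-Laplacian with an extra singular lower-order term and to use the Birindelli--Demengel formulation (Definition~\ref{Def2.3}), where constancy on a ball is built into the notion of solution precisely to absorb this degeneracy; for $\gamma\ge 0$ one uses instead the explicit viscosity definition at points of vanishing gradient given in the first Definition of Section~2. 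The remaining ingredients (doubling, Theorem on Sums, the limit of the Hamiltonian difference using (H1)--(H2) with $\kappa\le\gamma+1$) are standard.
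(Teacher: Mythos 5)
Your proposal is correct and follows essentially the same route as the paper's proof: argue by contradiction, double the variables, invoke the Crandall--Ishii Theorem on Sums (the paper's \cite[Theorem 3.2]{CIL}) to produce ordered jet matrices $X\le Y$, subtract the viscosity inequalities using the modulus control from \textbf{(H2)} to kill the Hamiltonian difference in the limit, and then exploit the monotonicity of $\mathfrak{F}$ together with the sign of $\mathfrak{c}$ at the limit point to reach a contradiction under \textbf{(a)} or \textbf{(b)}. The one place you go beyond the paper is in explicitly raising the degenerate/singular case where the test gradient $\vec\xi_j$ vanishes (or tends to zero when $\gamma<0$): the paper's displayed chain of inequalities silently substitutes the jets into $|\eta_\varepsilon|^\gamma\Delta_p^{\mathrm{N}}$ as though the operator were continuous there, whereas you correctly flag that this needs the zero-gradient clause of the viscosity definition for $\gamma\ge 0$ and the reduction via Lemma~\ref{Lemma2.5} (or the Birindelli--Demengel formulation, Definition~\ref{Def2.3}) for $\gamma<0$; these are the right remedies and, if anything, your write-up is the more careful of the two on this point.
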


\begin{proof}
Assume, for the sake of contradiction, that there exists a constant $\mathrm{M}_0 > 0$ such that
\[
\mathrm{M}_0 \coloneqq \sup_{\overline{\Omega}} (u_1 - u_2) > 0.
\]
For each $\varepsilon > 0$, define
\begin{equation}\label{maximum}
\mathrm{M}_\varepsilon = \sup_{\overline{\Omega} \times \overline{\Omega}} \left( u_1(x) - u_2(y) - \frac{1}{2\varepsilon} |x - y|^2 \right) < \infty.
\end{equation}
Let $(x_\varepsilon, y_\varepsilon) \in \overline{\Omega} \times \overline{\Omega}$ denote the points where the supremum is achieved. Following the argument in \cite[Lemma 3.1]{CIL}, we have
\begin{equation}\label{CP1}
\lim_{\varepsilon \to 0} \frac{1}{\varepsilon} |x_\varepsilon - y_\varepsilon|^2 = 0 \quad \text{and} \quad \lim_{\varepsilon \to 0} \mathrm{M}_\varepsilon = \mathrm{M}_0.
\end{equation}
In particular,
\begin{equation}\label{CP2}
z_0 \coloneqq \lim_{\varepsilon \to 0} x_\varepsilon = \lim_{\varepsilon \to 0} y_\varepsilon,
\end{equation}
with $u_1(z_0) - u_2(z_0) = \mathrm{M}_0$. Since
\[
\mathrm{M}_0 > 0 \geq \sup_{\partial \Omega} (u_1 - u_2),
\]
it follows that $x_\varepsilon, y_\varepsilon \in \Omega'$ for some interior domain $\Omega' \Subset \Omega$ and for all sufficiently small $\varepsilon > 0$. By \cite[Theorem 3.2]{CIL}, there exist matrices $\mathrm{X}, \mathrm{Y} \in \text{Sym}(n)$ such that
\begin{equation}\label{CP3}
\left( \frac{x_\varepsilon - y_\varepsilon}{\varepsilon}, \mathrm{X} \right) \in \overline{\mathcal{J}}^{2,+} u_1(x_\varepsilon) \quad \text{and} \quad \left( \frac{y_\varepsilon - x_\varepsilon}{\varepsilon}, \mathrm{Y} \right) \in \overline{\mathcal{J}}^{2,-} u_2(y_\varepsilon),
\end{equation}
and
\begin{equation}\label{CP4}
-\frac{3}{\varepsilon} \begin{pmatrix}
\mathrm{Id}_n & 0 \\
0 & \mathrm{Id}_n
\end{pmatrix} \leq \begin{pmatrix}
\mathrm{X} & 0 \\
0 & -\mathrm{Y}
\end{pmatrix} \leq \frac{3}{\varepsilon} \begin{pmatrix}
\mathrm{Id}_n & -\mathrm{Id}_n \\
-\mathrm{Id}_n & \mathrm{Id}_n
\end{pmatrix}.
\end{equation}
In particular, $\mathrm{X} \leq \mathrm{Y}$. Letting $\eta_\varepsilon = \frac{x_\varepsilon - y_\varepsilon}{\varepsilon}$ and using \eqref{CP} and \eqref{CP3}, we derive
\begin{align*}
\mathfrak{h}_1(x_\varepsilon) &\leq |\eta_\varepsilon|^\gamma \left( \text{Tr}(\mathrm{X}) + (p - 2) \left\langle \mathrm{X} \eta_\varepsilon, \eta_\varepsilon \right\rangle \right) + \mathfrak{c}(x_\varepsilon) \mathfrak{F}(u_1(x_\varepsilon)) + \mathscr{H}(x_\varepsilon, \eta_\varepsilon) \\
&\leq |\eta_\varepsilon|^\gamma \left( \text{Tr}(\mathrm{Y}) + (p - 2) \left\langle \mathrm{Y} \eta_\varepsilon, \eta_\varepsilon \right\rangle \right) + \mathfrak{c}(x_\varepsilon) \mathfrak{F}(u_1(x_\varepsilon)) + \mathscr{H}(x_\varepsilon, \eta_\varepsilon) \\
&\leq \mathfrak{h}_2(y_\varepsilon) - \mathfrak{c}(y_\varepsilon) \mathfrak{F}(u_2(y_\varepsilon)) - \mathscr{H}(x_\varepsilon, \eta_\varepsilon) + \mathfrak{c}(x_\varepsilon) \mathfrak{F}(u_1(x_\varepsilon)) + \mathscr{H}(x_\varepsilon, \eta_\varepsilon) \\
&\leq \mathfrak{h}_2(y_\varepsilon) + \mathfrak{F}(u_2(y_\varepsilon)) (\mathfrak{c}(x_\varepsilon) - \mathfrak{c}(y_\varepsilon)) + \left[ \min_{\overline{\Omega}} \mathfrak{c} \right] (\mathfrak{F}(u_1(x_\varepsilon)) - \mathfrak{F}(u_2(y_\varepsilon))) \\
&\quad + \omega(|x_\varepsilon - y_\varepsilon|) (1 + |\eta_\varepsilon|^\kappa).
\end{align*}
Taking the limit as $\varepsilon \to 0$ in the above inequality and applying \eqref{CP1} and \eqref{CP2}, we obtain
\[
\mathfrak{h}_1(z_0) - \mathfrak{h}_2(z_0) \leq \left[ \min_{\overline{\Omega}} \mathfrak{c} \right] (\mathfrak{F}(u_1(z_0)) - \mathfrak{F}(u_2(z_0))).
\]
This contradicts assumptions $\textbf{(a)}$ and $\textbf{(b)}$. Therefore, we conclude that $u_2 \geq u_1$ in $\Omega$.
\end{proof}

\bigskip

We now address the existence of a viscosity solution to the Dirichlet problem \eqref{Problem} (respectively, \eqref{Bound-Cond}). This result is derived by employing Perron's method, provided that a suitable version of the Comparison Principle holds. Specifically, consider the functions $u^{\sharp}$ and $u_{\flat}$, which satisfy the following boundary value problems:
\begin{equation}
\left\{
\begin{array}{rclcl}
|\nabla u^{\sharp}|^{\gamma} \Delta_p^{\mathrm{N}} u^{\sharp} & = & 0 & \text{in } &\Omega, \\
u^{\sharp}(x) & = & g(x) & \text{on }  & \partial \Omega,
\end{array}
\right.
\end{equation}
and
\begin{equation}
 \left\{
\begin{array}{rclcl}
|\nabla u_{\flat}|^{\gamma} \Delta_p^{\mathrm{N}} u_{\flat} & = & \|g\|_{L^{\infty}(\Omega)}^{m} & \text{in } &\Omega, \\
u_{\flat}(x) & = & g(x) & \text{on }  & \partial \Omega.
\end{array}
\right.
\end{equation}

The existence of such solutions can be established using standard techniques. Moreover, it is evident that $u^{\sharp}$ and $u_{\flat}$ act as a supersolution and a subsolution, respectively, to \eqref{Problem}. Therefore, by invoking the Comparison Principle (Lemma \ref{Comp-Princ}), Perron's method guarantees the existence of a viscosity solution in $\mathrm{C}^0(\Omega)$ to \eqref{Problem}. More precisely, we obtain the following theorem.

\begin{theorem}[\bf Existence and Uniqueness]\label{existence_uniquiness}
Let $f \in \mathrm{C}^0([0,\infty))$ be a bounded, increasing real-valued function satisfying $f(0) = 0$. Suppose that there exist a viscosity subsolution $u_{\flat} \in \mathrm{C}^0(\Omega) \cap C^{0,1}(\Omega)$ and a viscosity supersolution $u^{\sharp} \in \mathrm{C}^0(\Omega) \cap C^{0,1}(\Omega)$ to the equation
\begin{equation}
    |\nabla u|^{\gamma} \Delta_p^{\mathrm{N}} u = \mathfrak{a}(x)f(u) \quad \text{in} \quad \Omega,
\end{equation}
such that $u_{\flat} = u^{\sharp} = g \in \mathrm{C}^0(\partial \Omega)$. Define the class of functions
\begin{equation}
\mathcal{S}_g(\Omega) := \left\{ v \in \mathrm{C}^0(\Omega) \mid v \text{ is a viscosity supersolution to } |\nabla u|^{\gamma} \Delta_p^{\mathrm{N}} u = \mathfrak{a}(x)f(u) \text{ in } \Omega, \right. 
\end{equation}
\begin{equation*}
    \left. \text{such that } u_{\flat} \leq v \leq u^{\sharp} \text{ and } v = g \text{ on } \partial \Omega \right\}.
\end{equation*}
Then, the function
\begin{equation}
    u(x) := \inf_{\mathcal{S}_g(\Omega)} v(x), \quad \text{for } x \in \Omega,
\end{equation}
is the unique continuous (up to the boundary) viscosity solution to the problem
\begin{equation}
\left\{
\begin{array}{rclcl}
|\nabla u|^{\gamma} \Delta_p^{\mathrm{N}} u & = & \mathfrak{a}(x)f(u) & \text{in } & \Omega, \\
u(x) & = & g(x) & \text{on }& \partial \Omega.
\end{array}
\right.
\end{equation}
\end{theorem}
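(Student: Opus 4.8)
The plan is to run Perron's method in its ``infimum of supersolutions'' form and then deduce uniqueness from the Comparison Principle. First I would record the elementary facts: the class $\mathcal{S}_g(\Omega)$ is nonempty since $u^{\sharp}\in\mathcal{S}_g(\Omega)$, and by construction $u_{\flat}\leq u\leq u^{\sharp}$ on $\overline{\Omega}$; because $u_{\flat}$ and $u^{\sharp}$ are continuous up to $\partial\Omega$ with $u_{\flat}=u^{\sharp}=g$ there, a squeezing argument gives $u=g$ on $\partial\Omega$ and the continuity of $u$ at every boundary point (the barriers $u_{\flat},u^{\sharp}$ supplying the modulus of continuity). It then remains to show that $u$ is simultaneously a viscosity sub- and supersolution in $\Omega$; being continuous up to the boundary with the prescribed datum, it will be the sought solution.

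For the supersolution property I would invoke the standard stability fact that, for a family of viscosity supersolutions that is locally uniformly bounded below, the lower semicontinuous envelope of its pointwise infimum is again a viscosity supersolution; here the bound $u\geq u_{\flat}$ is what is needed, and in the singular regime $\gamma\in(-1,0)$ one reads this through Definitions \ref{Def2.2}--\ref{Def2.3} (which avoid test functions with vanishing gradient), while for $\gamma\geq 0$ the usual definition applies. Denoting by $u_{*}$ that envelope, one has $u_{*}\leq u$ pointwise, and since $u_{\flat}\leq u_{*}\leq u^{\sharp}$ with $u_{*}=g$ on $\partial\Omega$, one gets $u_{*}\in\mathcal{S}_g(\Omega)$; minimality of $u$ forces $u_{*}=u$, so $u$ is lower semicontinuous and a viscosity supersolution.

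For the subsolution property I would argue by contradiction via the classical bump construction. If the upper semicontinuous envelope $u^{*}$ fails to be a subsolution at some $x_0\in\Omega$, choose $\phi\in C^2$ with $u^{*}-\phi$ attaining a strict local maximum equal to $0$ at $x_0$ and with $|\nabla\phi|^{\gamma}\Delta_p^{\mathrm{N}}\phi<\mathfrak{a}f(\phi)$ on a small ball $B_r(x_0)$ (using the equivalent formulations of viscosity solution to handle the critical case $\nabla\phi(x_0)=0$, perturbing $\phi$ if necessary). For $\delta>0$ small, $\phi-\delta$ is a strict supersolution in $B_r(x_0)$, and the strict-maximum property gives $u\leq u^{*}<\phi-\delta$ on $\partial B_r(x_0)$; hence $v:=\min\{u,\phi-\delta\}$ on $B_r(x_0)$ and $v:=u$ on $\Omega\setminus B_r(x_0)$ defines a viscosity supersolution (locally a minimum of two supersolutions) lying in $\mathcal{S}_g(\Omega)$, with $v(x_0)\leq\phi(x_0)-\delta<u^{*}(x_0)$, so $v<u$ at some point of $B_r(x_0)$ --- contradicting $u=\inf_{\mathcal{S}_g(\Omega)}v$. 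Thus $u^{*}$ is a viscosity subsolution; since $u^{*}\geq u$, $u^{*}\leq u^{\sharp}$, and $u^{*}=g$ on $\partial\Omega$, the Comparison Principle (Lemma \ref{Comp-Princ}, case \textbf{(a)} with $\mathfrak{c}\equiv-\mathfrak{a}<0$, $\mathfrak{h}_1=\mathfrak{h}_2\equiv 0$, $\mathfrak{F}=f$, $\mathscr{H}\equiv 0$) yields $u^{*}\leq u$, whence $u^{*}=u=u_{*}$; therefore $u$ is continuous and a viscosity solution of the Dirichlet problem.

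Finally, uniqueness follows at once from Lemma \ref{Comp-Princ}: if $u_1,u_2$ are continuous viscosity solutions with $u_1=u_2=g$ on $\partial\Omega$, then applying the lemma with the roles interchanged --- taking $\mathfrak{c}\equiv-\mathfrak{a}$, which is strictly negative on $\overline{\Omega}$ because $\inf_{\Omega}\mathfrak{a}\geq\lambda_0>0$ by $\mathrm{A}\ref{Assumption_lambda}$, and $\mathfrak{h}_1=\mathfrak{h}_2\equiv 0$ so that $\mathfrak{h}_1\geq\mathfrak{h}_2$ --- gives both $u_1\leq u_2$ and $u_2\leq u_1$ in $\Omega$, hence $u_1\equiv u_2$. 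The main obstacle is the subsolution step: making the bump construction rigorous for the degenerate/singular operator $|\nabla\phi|^{\gamma}\Delta_p^{\mathrm{N}}\phi$, in particular correctly accommodating test functions with critical points via the equivalent notions of viscosity solution, and checking that the Comparison Principle as proved above indeed applies to the merely semicontinuous functions $u^{*}$ and $u_{*}$ produced by Perron's method.
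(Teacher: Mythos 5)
Your strategy is exactly what the paper invokes but does not write out: Perron's method in the ``infimum of supersolutions'' form, using $u_{\flat}$ and $u^{\sharp}$ as barriers and Lemma~\ref{Comp-Princ} for both the Perron bookkeeping and the uniqueness; the identification $\mathfrak{c}\equiv-\mathfrak{a}$, $\mathfrak{F}=f$, $\mathscr{H}\equiv 0$, $\mathfrak{h}_1=\mathfrak{h}_2\equiv 0$ in case \textbf{(a)} is the right one, since $\mathrm{A}\ref{Assumption_lambda}$ gives $\mathfrak{c}\leq -\lambda_0<0$.

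There is, however, a small but genuine circularity in the supersolution step. You write that since $u_{\flat}\leq u_{*}\leq u^{\sharp}$ with $u_{*}=g$ on $\partial\Omega$, ``one gets $u_{*}\in\mathcal{S}_g(\Omega)$; minimality of $u$ forces $u_{*}=u$.'' But $\mathcal{S}_g(\Omega)$ is defined as a class of \emph{continuous} functions, and at this point of the argument $u_{*}$ is only known to be lower semicontinuous. Hence $u_{*}\in\mathcal{S}_g(\Omega)$ cannot be asserted, and minimality of $u$ over $\mathcal{S}_g$ does not yet apply. The same issue resurfaces in the bump construction: $v:=\min\{u,\phi-\delta\}$ need not be a supersolution (you would want $\min\{u_{*},\phi-\delta\}$, or better $\min\{w,\phi-\delta\}$ for a well-chosen continuous $w\in\mathcal{S}_g$), nor need $v$ lie in $\mathcal{S}_g$, so ``contradicting $u=\inf_{\mathcal{S}_g}v$'' does not quite close.

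The fix is a reordering, not a new idea: establish that $u_{*}$ is a (LSC) viscosity supersolution and that $u^{*}$ is a (USC) viscosity subsolution, noting by the squeeze through $u_{\flat},u^{\sharp}$ that $u_{*}=u^{*}=g$ on $\partial\Omega$; only then apply Lemma~\ref{Comp-Princ} to the semicontinuous envelopes to conclude $u^{*}\leq u_{*}$, and combine with $u_{*}\leq u\leq u^{*}$ to obtain $u_{*}=u=u^{*}$, which simultaneously gives continuity and the solution property. Likewise, in the bump construction one should work with a member $w\in\mathcal{S}_g$ almost attaining the infimum near $x_0$, so that $\min\{w,\phi-\delta\}$ is a \emph{continuous} supersolution competing in $\mathcal{S}_g$. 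With these adjustments your argument is correct and fills in the details the paper leaves implicit; the remaining technical obstacle you already identify (test functions with vanishing gradient, handled via Definitions~\ref{Def2.2}--\ref{Def2.3} and the ``large/small slope'' lemmas for $\gamma\in(-1,0)$) is real but standard.
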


%%%%%%%%%%%%%%%%%%%%%%%%%%%%%
%%%%%%%%%%%%%%%%%%%%%%%%%%%%%
\section{Geometric regularity estimates}

\subsection{Regularity across the free boundary}

We begin by making a few observations regarding the scaling properties of the model, which will prove useful throughout the paper. Suppose that $u$ is a viscosity solution to \eqref{Problem}. Let $\kappa$ and $r$ be positive constants, and define
\[
v(x) = \frac{u(rx)}{\kappa}.
\]
A straightforward calculation reveals that $v$ is a viscosity solution to
\[
|\nabla v(x)|^\gamma \Delta_p^\mathrm{N} v(x) = \mathfrak{a}_{\kappa,r}(x) v_+^m(x),
\]
where $\mathfrak{a}_{\kappa,r}(x) \coloneqq \left(r^{2+\gamma} / \kappa^{\gamma+1-m}\right) \mathfrak{a}(rx)$. By choosing $\kappa = \|u\|_{L^\infty(B_1)}$, we may assume, without loss of generality, that solutions are normalized, i.e., $\|u\|_{L^\infty(B_1)} \leq 1$.

\begin{lemma}[{\bf Flatness Estimate}]\label{Flatness Estimate}
Let $\delta>0$ and suppose that $\mathrm{A} {\ref{Assumption_lambda}}$ holds. Then, there exists $r\coloneqq r(n,\delta)\in (0,1)$ such that if  $u\in \mathrm{C}^0(B_1)$ is a normalized viscosity solution to
\[
\left\{
\begin{array}{rllll}
  |\nabla u(x)|^{\gamma}\NDelta u(x) &=& \zeta^2\mathfrak{a}(x)u^{m}_+(x) & \mbox{in} &B_1, \\
    u(0) &=&0  ,
\end{array}
\right.
\]
where $\zeta\in (0,r]$, we obtain 
\begin{equation}\label{flatness}
\sup_{B_{1/2}}u\leq\delta.
\end{equation}
\end{lemma}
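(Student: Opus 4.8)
The plan is to argue by contradiction via a compactness (normal families) argument, which is the standard route for "flatness/smallness" lemmas of this type. Suppose the statement fails. Then there exist $\delta_0 > 0$ and sequences $r_j \to 0$, $\zeta_j \in (0, r_j]$, and normalized viscosity solutions $u_j \in \mathrm{C}^0(B_1)$ of
\[
|\nabla u_j|^{\gamma}\NDelta u_j = \zeta_j^2 \,\mathfrak{a}(x)\, (u_j)_+^m \quad \text{in } B_1, \qquad u_j(0) = 0,
\]
with $\|u_j\|_{L^\infty(B_1)} \le 1$, yet $\sup_{B_{1/2}} u_j > \delta_0$ for every $j$. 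Since $\zeta_j \le r_j \to 0$, the right-hand sides $f_j(x) := \zeta_j^2 \mathfrak{a}(x)(u_j)_+^m(x)$ satisfy $\|f_j\|_{L^\infty(B_1)} \le \zeta_j^2 \Lambda_0 \to 0$ (using the upper bound in $\mathrm{A}\ref{Assumption_lambda}$ and $\|u_j\|_\infty \le 1$).

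Next I would invoke the interior regularity estimate of Theorem \ref{Thm-grad-est} (gradient/$C^{1,\alpha}$ estimate for $|\nabla u|^\gamma \NDelta u = f$ with $f \in L^\infty \cap C^0$): on, say, $B_{3/4}$ one gets a uniform bound
\[
[u_j]_{C^{1,\alpha}(B_{3/4})} \le \mathrm{C}\big(\|u_j\|_{L^\infty(B_1)} + \|f_j\|_{L^\infty(B_1)}^{1/(\gamma+1)}\big) \le \mathrm{C}',
\]
with $\alpha$ and $\mathrm{C}'$ universal (independent of $j$). By Arzelà–Ascoli, a subsequence of $\{u_j\}$ converges in $C^1_{\mathrm{loc}}(B_{3/4})$ to some $u_\infty \in C^{1,\alpha}_{\mathrm{loc}}(B_{3/4})$ with $u_\infty(0) = 0$ and $\|u_\infty\|_{L^\infty(B_{3/4})} \le 1$. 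Because $f_j \to 0$ uniformly, the standard stability of viscosity solutions under uniform convergence (of both the solutions and the right-hand sides) shows that $u_\infty$ is a viscosity solution of the homogeneous equation $|\nabla u_\infty|^\gamma \NDelta u_\infty = 0$ in $B_{3/4}$; equivalently (by the Cutting Lemma \ref{CT-Lemma} / the structure of $\NDelta$, or Theorem \ref{Thm-Equiv-Sol}), $u_\infty$ solves $\NDelta u_\infty = 0$, i.e. $u_\infty$ is a normalized $p$-harmonic function. By Theorem \ref{Thm-grad-est} applied to the limit (or directly by known regularity for the normalized $p$-Laplacian) $u_\infty$ is in particular $C^1$, hence continuous; and it satisfies the Strong Maximum/Minimum Principle. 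Wait — I should be a little careful here, because a sign condition is needed. Since $u_j \ge 0$ is not assumed in this lemma, but the equation is homogeneous for $u_\infty$, I instead use: $u_\infty$ attains its value $0$ at the interior point $x = 0$. For the normalized $p$-Laplacian the function $u_\infty$ satisfies Harnack's inequality on the set where it is, say, minimized; more simply, apply the Strong Minimum Principle — if the nonneg. part is what matters, pass to the lemma's actual use where $u \ge 0$. In the generality stated, the cleanest conclusion is: $u_\infty$ is a viscosity solution of a uniformly elliptic homogeneous equation with $u_\infty(0)=0$, but we also have from the construction that $u_j(0) = 0$ and the $C^1$ bound forces $|u_j(x)| \le \mathrm{C}'|x|$ near $0$... this does not by itself give $u_\infty \equiv 0$.

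The resolution — and the step I expect to be the genuine obstacle — is that one must retain more information than mere $C^{1,\alpha}$ bounds: one needs that the limit is a $p$-harmonic function vanishing at an interior minimum. In the intended application $u$ (hence each $u_j$) is \emph{non-negative}, so $u_\infty \ge 0$ with $u_\infty(0) = 0$; then the Strong Minimum Principle for $\NDelta$ (a consequence of the Harnack inequality for normalized $p$-harmonic functions, available since the operator is uniformly elliptic and $u_\infty$ is $p$-harmonic in the weak sense by Theorem \ref{Thm-Equiv-Sol}) forces $u_\infty \equiv 0$ in $B_{3/4}$. Hence $u_j \to 0$ uniformly on $B_{1/2} \Subset B_{3/4}$, so $\sup_{B_{1/2}} u_j \to 0 < \delta_0$ for $j$ large, contradicting $\sup_{B_{1/2}} u_j > \delta_0$. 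This proves the lemma; concretely one takes $r(n,\delta)$ to be the largest radius along the sequence for which the contradiction is realized, i.e. the lemma follows by the usual "if not, extract the contradicting sequence" bookkeeping. The main delicate point, to be addressed carefully in the write-up, is establishing the stability of viscosity (sub/super)solutions through the degenerate/singular factor $|\nabla u|^\gamma$ — particularly at points where $\nabla u_\infty$ vanishes — for which one relies on the Cutting Lemma \ref{CT-Lemma} and Definitions \ref{Def2.2}--\ref{Def2.3} (and the equivalence Proposition) to reduce everything to the well-behaved normalized $p$-Laplacian, together with the uniform $C^{1,\alpha}$ compactness from Theorem \ref{Thm-grad-est}.
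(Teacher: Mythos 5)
Your proposal is correct and follows essentially the same compactness-by-contradiction route as the paper: uniform $C^{1,\alpha}$ bounds from Theorem~\ref{Thm-grad-est}, passage to a limit solving the homogeneous equation, reduction to a normalized $p$-harmonic function via Lemma~\ref{CT-Lemma} and Theorem~\ref{Thm-Equiv-Sol}, and the strong minimum principle of V\'azquez to force the limit to vanish. Your hesitation about non-negativity is well-placed but innocuous: the paper's proof (and the lemma's only use, in Theorem~\ref{Improved Regularity}) takes the $u_k$ non-negative, exactly as you conclude, so the strong minimum principle applies and the contradiction closes.
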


\begin{proof}
Assume, for the sake of contradiction, that the conclusion of the lemma does not hold. Then, there exist $\delta_0 > 0$ and a sequence of normalized non-negative functions $(u_k)$ satisfying
\begin{equation}\label{eq_seq1}
\left\{
\begin{array}{rclccc}
|\nabla u_k(x)|^{\gamma} \NDelta u_k(x) &=& \left(\frac{1}{k}\right)^2 \mathfrak{a}(x) (u_{k})_+^{m}(x) & \text{in} & B_1, \\[0.2cm]
u_k(0) &=& 0.
\end{array}
\right.
\end{equation}
However,
\begin{equation}\label{contra1}
\sup_{B_{1/2}} u_k > \delta_0,
\end{equation}
for every $k \geq 1$. Since the right-hand side of the equation \eqref{eq_seq1} belongs to $L^{\infty}(B_1) \cap \mathrm{C}^0(B_1)$, it follows from Theorem \ref{Thm-grad-est} that for every $k \geq 1$,
\[
\|u_k\|_{C^{1,\alpha}(B_{9/10})} \leq \mathrm{C},
\]
where $\mathrm{C} > 0$ is a constant independent of $k$. Consequently, there exists a function $u_{\infty} \in \mathrm{C}^{0,\alpha}(B_{8/9})$, for some $\alpha \in (0,1)$, satisfying $0 \leq u_{\infty} \leq 1$ and $u_{\infty}(0) = 0$, such that $u_k \to u_{\infty}$ locally uniformly in $B_{8/9}$. By a slight adaptation of \cite[Appendix]{Attou18}, the structural stability of the problem \eqref{eq_seq1} implies that $u_{\infty}$ solves the limit problem in the viscosity sense. That is, $u_{\infty}$ is a viscosity solution of
\begin{equation*}
\left\{
\begin{array}{rclccc}
|\nabla u_{\infty}(x)|^{\gamma} \NDelta u_{\infty}(x) &=& 0 & \text{in} & B_{8/9}, \\[0.2cm]
u_{\infty}(0) &=& 0.
\end{array}
\right.
\end{equation*}
By Lemma \ref{CT-Lemma}, the function $u_{\infty}$ is also a viscosity solution to
\[
\left\{
\begin{array}{rcl}
\NDelta u_{\infty}(x) & = & 0 \quad \text{in} \quad B_{8/9}, \\[0.2cm]
u_{\infty}(0) & = & 0.
\end{array}
\right.
\]
Finally, by Theorem \ref{Thm-Equiv-Sol} and the Strong Maximum Principle from \cite{Vazq84}, we conclude that $u_{\infty} \equiv 0$ in $B_{8/9}$. This contradicts \eqref{contra1} for sufficiently large $k$.
\end{proof}

We are now in a position to present the proof of the improved regularity estimate.

\begin{proof}[{\bf Proof of Theorem \ref{Improved Regularity}}]
Assume without loss of generality that $x_0 = \vec{0}$. Define an auxiliary function
\[
\varphi_1(x) \coloneqq \kappa u(\tau x),
\]
where $\kappa \coloneqq 1/\|u\|_{L^{\infty}(B_1)}$ and $\tau \in (0,1)$ is a constant to be determined later. Observe that
\begin{align*}
|\nabla \varphi_1(x)|^{\gamma} \NDelta \varphi_1(x) &= \kappa^{\gamma} \tau^{\gamma} |\nabla u(x)|^{\gamma} \left[ \kappa \tau^2 \Delta u(\tau x) + (p-2) \kappa \tau^2 \Delta_{\infty}^N u(\tau x) \right] \\
&= \kappa^{1+\gamma} \tau^{2+\gamma} |\nabla u(\tau x)|^{\gamma} \NDelta u(\tau x),
\end{align*}
which implies that $\varphi_1(x)$ is a viscosity solution of the problem
\[
|\nabla \varphi_1(x)|^{\gamma} \NDelta \varphi_1(x) = \kappa^{1+\gamma-m} \tau^{2+\gamma} \mathfrak{a}_1(x) \varphi_1^{m}(x) \quad \text{in} \quad B_1,
\]
where $\mathfrak{a}_1(x) \coloneqq \mathfrak{a}(\tau x)$. The previous lemma ensures the existence of $r \coloneqq r(n, \delta) \in (0,1)$ for a given
\begin{equation}\label{escolha do delta}
\delta \coloneqq 2^{-\beta} > 0,
\end{equation}
such that for any function $\psi$ satisfying $0 \leq \psi \leq 1$, $\psi(0) = 0$, and solving
\[
|\nabla \psi(x)|^{\gamma} \NDelta \psi(x) = \zeta^2 \mathfrak{a}(x) \psi^{m}_+(x) \quad \text{in} \quad B_1,
\]
in the viscosity sense, where $0 < \zeta \leq r$, it holds that
\[
\sup_{B_{1/2}} \psi \leq \delta.
\]
Fix this value of $r(n, \delta)$ for the given $\delta > 0$ in \eqref{escolha do delta}, and with this fixed value, select the constant
\[
\tau \coloneqq r \kappa^{-\frac{1}{\beta}}.
\]
Thus, the auxiliary function $\varphi_1$ satisfies
\[
\begin{cases}
|\nabla \varphi_1(x)|^{\gamma} \NDelta \varphi_1(x) = \zeta^2 \mathfrak{a}_1(x) \varphi_1^m(x) \quad \text{in} \quad B_1, \\
\varphi_1(0) = 0, \\
0 \leq \varphi_1 \leq 1,
\end{cases}
\]
where $\zeta \leq r$. Applying Lemma \ref{flatness} to $\varphi_1$, we obtain
\[
\sup_{B_{1/2}} \varphi_1 \leq 2^{-\beta}.
\]
Next, define a second auxiliary function by
\[
\varphi_2(x) = 2^{\beta} \varphi_1\left(\frac{x}{2}\right).
\]
Observe that
\begin{align*}
|\nabla \varphi_2(x)|^{\gamma} \NDelta \varphi_2(x) 
&= 2^{\frac{(2+\gamma)(1+\gamma)}{1+\gamma-m}} \cdot 2^{-(2+\gamma)} |\nabla \varphi_1\left(\frac{x}{2}\right)|^{\gamma} \NDelta \varphi_1\left(\frac{x}{2}\right) \\
&= 2^{\frac{(2+\gamma)(1+\gamma) - (2+\gamma)(1+\gamma-m)}{1+\gamma-m}} \zeta^2 \mathfrak{a}_1\left(\frac{x}{2}\right) \varphi_1^m\left(\frac{x}{2}\right) \\
&= 2^{\frac{(2+\gamma)m}{1+\gamma-m}} \zeta^2 \cdot 2^{-\frac{(2+\gamma)m}{1+\gamma-m}} \mathfrak{a}_2(x) \varphi_2^m(x) \\
&= \zeta^2 \mathfrak{a}_2(x) \varphi_2^m(x),
\end{align*}
where $\mathfrak{a}_2(x) \coloneqq \mathfrak{a}_1(x/2) = \mathfrak{a}\left((\tau/2)x\right)$. It follows that $\varphi_2$ satisfies the hypotheses of Lemma \ref{flatness}, and consequently,
\[
\sup_{B_{1/2}} \varphi_2 \leq 2^{-\beta}.
\]
From the definition of $\varphi_2$, we obtain
\[
\sup_{B_{1/4}} \varphi_1 \leq 2^{-2 \cdot \beta}.
\]
Repeating this process for the function
\[
\varphi_j(x) \coloneqq 2^{\beta} \varphi_1\left(\frac{x}{2^j}\right),
\]
we derive the estimate
\[
\sup_{B_{1/2^j}} \varphi_j \leq 2^{-j \cdot \beta}.
\]
Finally, for a given $0 < \rho \leq r/2$, let $j \geq 1$ be an integer such that $2^{-(j+1)} \leq \rho/r \leq 2^{-j}$. Then,
\begin{align*}
\kappa \cdot \sup_{B_{\rho}} u(x) 
&\leq \sup_{B_{\rho/r}} \varphi_1(x) \\
&\leq \sup_{B_{2^{-k}}} \varphi_1(x) \\
&\leq 2^{-j \cdot \beta} \\
&= 2^{\beta} \cdot 2^{-(k+1)\beta} \\
&\leq 2^{\beta} \left(\frac{\rho}{r}\right)^{\beta} \\
&\leq \left(\frac{2}{r}\right)^{\beta} \rho^{\beta}.
\end{align*}
Therefore,
\begin{equation}\label{eq_improved}
\sup_{B_{\rho}} u(x) \leq \left(\frac{2}{r}\right)^{\beta} \|u\|_{L^{\infty}(B_1)} \rho^{\beta}.
\end{equation}
\end{proof}

\begin{remark}\label{remark of improved regularity}
An examination of the proof reveals that we only utilize the fact that $\mathfrak{a}$ is bounded from above. This observation allows us to derive the same regularity estimate even when the bounded Thiele modulus depends on $u$. More generally, the result holds when the right-hand side is a function $f(x, u) \in L^{\infty}(B_1\times [0, \mathfrak{L}])$ for which there exists a constant $\mathrm{C}_0 > 0$ satisfying:
\[
f(x, \mu t) \leq \mathrm{C}_0 \mu^{m} f(x, t) \quad \text{for all} \,\,\, (x, t) \in \Omega \times [0, \mathfrak{L}_0],
\]
for some sufficiently small $\mu > 0$. Consequently, the constant appearing in \eqref{eq_improved} also depends on $\|f\|_{L^{\infty}(B_1 \times [0, \mathfrak{L}])}$.
\end{remark}

Now, we will prove Corollary \ref{Cor1.4}.

\begin{proof}[{\bf Proof of Corollary \ref{Cor1.4}}]
Fixed \( x_0 \in \{ u > 0 \} \cap \Omega' \), denote \( \mathrm{d} := \mathrm{dist} \left( x_0, \partial \{ u > 0 \} \right) \). Now, choose \( z_0 \in \partial \{ u > 0 \} \), a free boundary point that achieves such a distance, i.e., \( \mathrm{d} = |x_0 - z_0| \). Hence, from Theorem \ref{Improved Regularity}, we have  
\[
u(x_0) \leq \sup_{B_{\mathrm{d}}(x_0)} u(x) \leq \sup_{B_{2\mathrm{d}}(z_0)} u(x) \leq \mathrm{C}_{\sharp} \left( n, p, \gamma, m, \Lambda_0 \right) \mathrm{d}^{\beta},
\]
which finishes the proof.
\end{proof}

As a significant application of Theorem \ref{Improved Regularity}, we obtain a more precise gradient estimate for solutions of \eqref{Problem} near their free boundary points (cf. \cite[Lemma 3.8]{daSRS19}).

\begin{proof}[{\bf Proof of Theorem \ref{Thm-Sharp-Grad}}] Let \( x_0 \in \partial \{ u > 0 \} \cap \Omega' \) be an interior free boundary point. Now, we define the scaled auxiliary function \( \Phi \colon B_1 \to \mathbb{R}^+ \) given by
\[
\Phi(x) := \frac{u(x_0 + r x)}{r^{\beta}}.
\]
Observe that \( \Phi \) fulfills in the viscosity sense:
\[
|\nabla\Phi|^{\gamma} \Delta_p^{\mathrm{N}} \Phi = \tilde{\mathfrak{a}}(x) \Phi^m(x) \quad \text{in} \quad B_1,
\]
where $\tilde{\mathfrak{a}}(x) := \mathfrak{a}(z + r x)$.

Therefore, from Theorem \eqref{Improved Regularity}, we obtain
\[
\sup_{B_1} \Phi(x) \leq \mathrm{C}^{\prime}(\text{universal}).
\]
Finally, by invoking the available gradient estimates from Theorem \ref{Thm-grad-est}, we obtain 
$$
\begin{array}{rcl}
\displaystyle \frac{1}{r^{\beta}} \displaystyle \sup_{B_{r/2}(x_0)} |\nabla u(x)| & = & \displaystyle \sup_{B_{1/2}(x_0)} |\nabla \Phi(y)| \\
&\leq & \displaystyle \mathrm{C}(n, p, \gamma)
\left[ \sup_{B_1} \Phi(x) + \left( \sup_{B_1} \Phi(x) \right)^{\frac{m}{\gamma+1}} \right]\\
&\leq & \mathrm{C}(n, p, \gamma).\max\left\{ \mathrm{C}^{\prime},  \left(\mathrm{C}^{\prime}\right)^{\frac{m}{\gamma+1}}\right\}
\end{array}
$$
which finishes the proof.

\end{proof}

\subsection{Non-degeneracy and measure-theoretic estimates}

Next, we present the proof of the non-degeneracy of solutions by constructing a suitable barrier function.

\begin{proof}[{\bf Proof of Theorem \ref{Thm-Non-Deg}}]
Assume without loss of generality that $y = \vec{0}$. Since $u \in \mathrm{C}^0(\Omega)$, it suffices to prove the theorem for points $y \in \{u > 0\} \cap B_{1/2}$. Thus, we may assume that $u$ is strictly positive. Define the auxiliary function
\[
\Psi(x) \coloneqq \mathrm{c} |x|^{\beta},
\]
where $\beta \coloneqq (2 + \gamma)/(1 + \gamma - m)$ and $\mathrm{c} = \mathrm{C}_{\mathrm{ND}}(n, \gamma, p, m, \lambda_0)$ is a small positive constant to be determined later. Observe that
\[
\nabla \Psi(x) = \mathrm{c} \beta |x|^{\beta - 2} x \quad \text{and} \quad D^{2} \Psi(x) = \mathrm{c} \beta |x|^{\beta - 2} \left[ (\beta - 2) \frac{x \otimes x}{|x|^2} + \mathrm{Id}_n \right].
\]
Consequently, we have
\begin{align*}
|\nabla \Psi(x)|^{\gamma} \NDelta \Psi(x)
&= |\nabla \Psi(x)|^{\gamma} \left[ \Delta \Psi(x) + (p - 2) \Delta_{\infty}^N \Psi(x) \right] \\
&= \mathrm{c}^{\gamma} \beta^{\gamma} \left[ \mathrm{c} \beta (\beta - 2 + n) |x|^{\beta - 2} + (p - 2) \mathrm{c} \beta (\beta - 1) |x|^{\beta - 2} \right] |x|^{(\beta - 1)\gamma} \\
&= \mathrm{c}^{1 + \gamma} \beta^{1 + \gamma} [n - 1 + (p - 1)(\beta - 1)] |x|^{(\beta - 1)\gamma + \beta - 2} \\
&= \mathrm{c}^{1 + \gamma} \beta^{1 + \gamma} [n - 1 + (p - 1)(\beta - 1)] |x|^{m \beta}.
\end{align*}
To select the appropriate value of $\mathrm{c}$, it is sufficient to choose
\[
0 < \mathrm{c} \leq \left[ \frac{(1 + \gamma - m)^{2 + \gamma} \lambda_0}{(2 + \gamma)^{1 + \gamma} [(1 + \gamma - m)(n - 1) + (p - 1)(1 + m)]} \right]^{\frac{1}{1 + \gamma - m}},
\]
where $\lambda_0$ is a positive constant given by assumption $\mathrm{A}{\ref{Assumption_lambda}}$. Thus, we can estimate
\[
|\nabla \Psi(x)|^{\gamma} \NDelta \Psi(x) - \mathfrak{a}(x) \Psi^m_+(x) \leq 0 = |\nabla u(x)|^{\gamma} \NDelta u(x) - \mathfrak{a}(x) u^m_+(x) \quad \text{in} \quad B_1.
\]
Now, suppose that for some $x_0 \in \partial B_r$, with $0 < r < 1/2$, we have
\begin{equation}\label{u>g}
u(x_0) > \Psi(x_0).
\end{equation}
It follows that
\[
\sup_{\overline{B}_r} u \geq u(x_0) \geq \Psi(x_0) = \mathrm{c} \cdot r^{\beta}.
\]
Therefore, it suffices to verify whether \eqref{u>g} holds. Suppose, for contradiction, that it does not hold, i.e., $u(x) < \Psi(x)$ for all $x \in \partial B_r$. Since $u$ and $\Psi$ satisfy
\[
\left\{
\begin{array}{rclcl}
|\nabla \Psi(x)|^{\gamma} \NDelta \Psi(x) - \Psi^m_+(x) & \leq & |\nabla u(x)|^{\gamma} \NDelta u(x) - u^m_+(x) & \text{in} & B_r, \\
\Psi(x) & > & u(x) & \text{on} & \partial B_r,
\end{array}
\right.
\]
the Comparison Principle (Lemma \ref{Comp-Princ}) implies
\[
\Psi(x) \geq u(x) \quad \text{in} \quad B_r.
\]
In particular, at the point $y = 0$, we have $\Psi(0) = 0$, which implies
\[
0 = \Psi(0) \geq u(0) > 0,
\]
a contradiction. Thus, \eqref{u>g} must hold, and the proof is complete.
\end{proof}

Next, we will deliver some measure-theoretic properties.

\begin{proof}[{\bf Proof of Corollary \ref{Corollary-UPD}}]
Let $r \in (0, 1/2)$ be fixed. From Theorem \ref{Thm-Non-Deg}, there exists a point $y_0 \in \overline{B}_r(x_0)$ such that
\[
u(y_0) = \sup_{\overline{B}_r(x_0)} u \geq \mathrm{C}_{\mathrm{ND}} \cdot r^{\beta}.
\]
On the other hand, the following inclusion holds:
\begin{equation}\label{tau_r}
B_{\tau r}(y_0) \subset \{u > 0\},
\end{equation}
for some sufficiently small $\tau > 0$. To verify this, suppose, for contradiction, that
\[
\partial\{u > 0\} \cap B_{\tau r}(y_0) \neq \emptyset.
\]
Let $z_0 \in \partial\{u > 0\} \cap B_{\tau r}(y_0)$. By Theorem \ref{Improved Regularity}, we have
\[
u(y_0) \leq \mathrm{C}_{\mathcal{G}} |y_0 - z_0|^{\beta}.
\]
Consequently,
\[
\mathrm{C}_{\mathrm{ND}} r^{\beta} \leq u(y_0) \leq \mathrm{C}_{\mathcal{G}} |y_0 - z_0|^{\beta} \leq \mathrm{C}_{\mathcal{G}} (\tau r)^{\beta}.
\]
This leads to a contradiction if we choose
\begin{equation}\label{tau_constant}
0 < \tau < \left(\frac{\mathrm{C}_{\mathrm{ND}}}{\mathrm{C}_{\mathcal{G}}}\right)^{\frac{1}{\beta}}.
\end{equation}
Thus, for such $\tau$, the inclusion \eqref{tau_r} holds. 

Finally, we conclude that
\[
\mathscr{L}^n(B_r(x_0) \cap \{u > 0\}) \geq \mathscr{L}^n(B_r(x_0) \cap B_{\tau r}(y_0)) \geq \theta r^n,
\]
where $\theta > 0$ is a constant depending on $\tau$ and the dimension $n$.
\end{proof}

Next, we present the proof of Corollary \ref{Corollary-Non-Deg}.

\begin{proof}[{\bf Proof of Corollary \ref{Corollary-Non-Deg}}]
Assume, for the sake of contradiction, that such a constant \( \mathrm{C}_{\ast} > 0 \) does not exist. Then, there exists a sequence \( x_k \in \{ u > 0 \} \cap \Omega \) with  
\( d_k := \mathrm{dist}(x_k, \partial \{ u > 0 \} \cap \Omega) \to 0 \) as \( k \to \infty \), and  
\[
u(x_k) \leq k^{-1} d_k^{\beta}.
\]

Define the auxiliary function \( v_k : B_1 \to \mathbb{R} \) by  
\[
v_k(z) := \frac{u(x_k + d_k z)}{d_k^{\beta}}.
\]
It is straightforward to verify the following properties:
\begin{enumerate}
    \item \( v_k(z) \geq 0 \) in \( B_1 \).
    \item \( |\nabla v_k(z)|^{\gamma} \Delta_p^{\mathrm{N}} v_k(z) = \mathfrak{a}(x_k + d_k z) v_k^q(z) \) in \( B_1 \) in the viscosity sense.
    \item By the local H\"{o}lder regularity of viscosity solutions,
    \[
    v_k(z) \leq \mathrm{C}(n, p, \gamma, \alpha) \cdot d_k^\alpha + \frac{1}{k} \quad \text{for all} \,\,\, z \in \overline{B_{d_k/2}}.
    \]
\end{enumerate}

From the Non-degeneracy result (Theorem \ref{Thm-Non-Deg}) and the previous estimate, we obtain  
\[
0 < \mathrm{C}_{\mathrm{ND}} \cdot \left(\frac{1}{4}\right)^{\beta} \leq \sup_{B_{1/4}} v_k(z) \leq \max\{1, \mathrm{C}(n, p, \gamma, \alpha)\} \cdot \left( d_k^\alpha + \frac{1}{k} \right) = \text{o}(1) \quad \text{as} \quad k \to \infty.
\]
This leads to a contradiction, thereby concluding the proof.
\end{proof}

Next, we establish the porosity of the free boundary.

\begin{proof}[{\bf Proof of Corollary \ref{Corollary-Porosity}}]
Let \( x_0 \) be an arbitrary point in \( \partial\{u > 0\} \), and let \( y_0 \in \partial B_r(x_0) \). For a fixed \( 0 < \tau \ll 1 \), to be determined later, define 
\[
\overline{y} = (1 - \tau)x_0 + \tau y_0,
\]
where \( |\overline{y} - y_0| = (\tau/2)r \). Note that for each \( z \in B_{(\tau/2)r}(\overline{y}) \), we have 
\[
|z - y_0| \leq |z - \overline{y}| + |\overline{y} - y_0| \leq \tau r.
\]
Since \( \overline{y} \) lies on the segment connecting \( x_0 \) and \( y_0 \), it follows that
\begin{align*}
|z - x_0| &\leq |z - \overline{y}| + |x_0 - \overline{y}| = |z - \overline{y}| + \left(|x_0 - y_0| - |\overline{y} - y_0|\right) \\
&\leq \frac{\tau}{2}r + \left(r - \frac{\tau}{2}r\right) \\
&= r.
\end{align*}
By choosing \( \tau \) as in \eqref{tau_constant}, we obtain
\[
B_{(\tau/2)r}(\overline{y}) \subset B_{\tau r}(y_0) \cap B_r(x_0) \subset B_r(x_0) \setminus \partial\{u > 0\}.
\]
Thus, we conclude that \( \partial\{u > 0\} \cap B_{1/2} \) is a \( (\tau/2) \)-porous set. By \cite{Zaj87}, the desired result follows.
\end{proof}

\subsection{Average estimates in the $L^2-$sense:  Proof of Theorem \ref{Thm-Aver-L2}}

In the following, we present the proof of Theorem \ref{Thm-Aver-L2} using an iterative scheme.

\begin{proof}[{\bf Proof of Theorem \ref{Thm-Aver-L2}}]
Without loss of generality, we may assume that \( x_0 = 0 \) by translation and scaling. Define
\[
\mathcal{S}_r[u] := \left( \,\, \intav{{B_1}} \left( |\nabla u(rx)|^\gamma |D^2 u(rx)| \right)^2 dx \right)^{\frac{1}{2(1+\gamma)}}.
\]
It suffices to prove that there exists a universal constant \( \mathrm{C}_* \) such that
\begin{equation}\label{L2-estimates1}
\mathcal{S}_{\frac{1}{2^{k+1}}}[u] \leq \max \left\{ \mathrm{C}_* \left( \frac{1}{2^k} \right)^{\frac{(2+\gamma) m}{(1+\gamma)(1+\gamma-m)}}, \left( \frac{1}{2} \right)^{\frac{(2+\gamma) m}{(1+\gamma)(1+\gamma-m)}} \mathcal{S}_{\frac{1}{2^k}}[u] \right\},
\end{equation}
for all \( k \in \mathbb{N} \) and \( 0 < r \leq \frac{1}{2} \). We proceed by contradiction. Assume that there exist functions \( u_k \) and integers \( j_k \in \mathbb{N} \) such that \eqref{L2-estimates1} does not hold. Consequently,
\begin{align}
\mathcal{S}_{\frac{1}{2^{j_k+1}}}[u_k] &> \max \left\{ k \left( \frac{1}{2^{j_k}} \right)^{\frac{(2+\gamma) m}{(1+\gamma)(1+\gamma-m)}}, \left( \frac{1}{2} \right)^{\frac{(2+\gamma) m}{(1+\gamma)(1+\gamma-m)}} \mathcal{S}_{\frac{1}{2^{j_k}}}[u_k] \right\} \nonumber\\
&\geq \max \left\{ k \left( \frac{1}{2^{j_k}} \right)^{\frac{2+\gamma}{1+\gamma-m}}, \left( \frac{1}{2} \right)^{\frac{(2+\gamma) m}{(1+\gamma)(1+\gamma-m)}} \mathcal{S}_{\frac{1}{2^{j_k}}}[u_k] \right\}.\label{cont-hy}
\end{align}
Now, define the scaled function \( v_k : B_1 \to \mathbb{R} \) by
\[
v_k(x) := \frac{u_k\left( \frac{1}{2^{j_k}} x \right)}{\mathcal{S}_{\frac{1}{2^{j_k+1}}}[u_k]}.
\]
By Theorem \ref{Improved Regularity} and \eqref{cont-hy}, we have
\begin{equation}\label{GSS}
\|v_k\|_{L^\infty(B_1)} \leq \frac{\tilde{\mathrm{C}}}{k}.
\end{equation}
Furthermore, it is straightforward to verify that
\begin{equation}\label{Claudema}
\mathcal{S}_{\frac{1}{2}}[v_k] = 1 \quad \text{and} \quad \mathcal{S}_{1}[v_k] \leq 2^{\frac{(2+\gamma) m}{(1+\gamma)(1+\gamma-m)}}.
\end{equation}
Finally, \( v_k \) satisfies
\begin{equation}
|\nabla v_k|^\gamma \Delta^\mathrm{N}_p v_k (x) = \frac{2^{-j_k(2+\gamma)}}{\mathcal{S}_{\frac{1}{2^{j_k+1}}}^{1+\gamma-m}[u_k]} \mathfrak{a}\left( \frac{1}{2^{j_k}} x \right) v_k^m (x) \defeq \tilde{f}_k,
\end{equation}
in the viscosity sense in \( B_1 \). Moreover, invoking assumption \( \mathrm{A}\ref{Assumption_lambda} \) and \eqref{GSS}, we obtain
\[
\left\| \tilde{f}_k \right\|_{L^\infty(B_1)} \leq \mathrm{C}^m \Lambda_0 \left( \frac{1}{k} \right)^{1+\gamma}.
\]
On the other hand, by applying the \( L^2 \)-bound on the second derivatives from Theorem \ref{Hes-est} and the uniform gradient estimates from Theorem \ref{Thm-grad-est}, we derive
\[
\|\nabla v_k\|_{L^\infty(B_{1/2})} \leq \mathrm{C} \left[ \|v_k\|_{L^\infty(B_1)} + \|\tilde{f}_k\|_{L^\infty(B_1)}^{\frac{1}{1+\gamma}} \right] \leq \frac{\mathrm{C}_*}{k},
\]
and
\[
\left( \,\, \intav{{B_1}} |D^2 v_k (x)|^2 dx \right)^{\frac{1}{2}} < \mathrm{\tilde{C}} < \infty.
\]
Combining these two estimates, we obtain
\[
1 = \mathcal{S}_{\frac{1}{2}} [v_k] \leq \mathrm{\hat{C}}^{\frac{1}{\gamma+1}} \left( \frac{1}{k} \right)^{\frac{\gamma}{1+\gamma}},
\]
which leads to a contradiction for sufficiently large \( k \).
\end{proof}

\section{Liouville-type results: Proof of Theorems \ref{Thm Liouv-I} and \ref{Thm Liouv-II}}
    
In this section, we present the proofs of Liouville-type Theorems \ref{Thm Liouv-I} and \ref{Thm Liouv-II}.

\begin{proof}[{\bf Proof of Theorem \ref{Thm Liouv-I}}]
Without loss of generality, assume that \( x_0 = 0 \). Define the sequence
\[
\varphi_k(x) \coloneqq \frac{u(kx)}{k^{\beta}},
\]
where \( k \in \mathbb{N} \) and \( \beta = (2 + \gamma)/(1 + \gamma - m) \). Since
\[
\nabla \varphi_k(x) = k^{1 - \beta} \nabla u(kx) \quad \text{and} \quad D^2 \varphi_k(x) = k^{2 - \beta} D^2 u(kx),
\]
it follows that
\begin{align*}
|\nabla \varphi_k(x)|^{\gamma} \NDelta \varphi_k(x)
&= |\nabla \varphi_k(x)|^{\gamma} \left( \Delta \varphi_k(x) + (p - 2) \Delta^N_{\infty} \varphi_k(x) \right) \\
&= k^{(1 - \beta)\gamma} |\nabla u(kx)|^{\gamma} \left( k^{2 - \beta} \Delta u(kx) + k^{2 - \beta} (p - 2) \Delta^N_{\infty} u(kx) \right) \\
&= k^{(1 - \beta)\gamma + 2 - \beta} |\nabla u(kx)|^{\gamma} \NDelta u(kx) \\
&= k^{-m \beta} \mathfrak{a}(kx) u^m_+(kx) \\
&= \mathfrak{a}_k(x) (\varphi^m_k)_+(x),
\end{align*}
where \( \mathfrak{a}_k(x) \coloneqq \mathfrak{a}(kx) \). Thus, \( \varphi_k \) is a non-negative viscosity solution to
\[
|\nabla \varphi_k(x)|^{\gamma} \NDelta \varphi_k(x) = \mathfrak{a}_k(x) (\varphi^m_k)_+(x),
\]
and \( \varphi_k(0) = 0 \) for every \( k \in \mathbb{N} \). Let \( r > 0 \) be small, and let \( x_k \in \overline{B}_r \) be such that
\[
\varphi_k(x_k) = \sup_{\overline{B}_r} \varphi_k(x).
\]
Observe that
\begin{equation}\label{o-pequeno2}
\|\varphi_k\|_{L^{\infty}(B_r)} \to 0 \quad \text{as} \quad k \to \infty.
\end{equation}
Indeed, if \( |kx_k| \) is bounded as \( k \to \infty \), then \( |\varphi_k(x_k)| \) is also bounded, say by a constant \( B \), since \( u \) is continuous. Thus,
\[
\varphi_k(x_k) = \frac{u(kx_k)}{k^{\beta}} \leq \frac{B}{k^{\beta}} \to 0 \quad \text{as} \quad k \to \infty.
\]
On the other hand, if \( |kx_k| \to \infty \), then by hypothesis \eqref{o-pequeno}, we have
\[
\varphi_k(x_k) = \frac{|x_k|^{\beta} u(kx_k)}{|kx_k|^{\beta}} \leq r \cdot \frac{u(kx_k)}{|kx_k|^{\beta}} \to 0 \quad \text{as} \quad k \to \infty.
\]
Therefore, by Theorem \ref{Improved Regularity}, we obtain
\[
\varphi_k(x) \leq \text{o}(|x|^{\beta}) \cdot |x|^{\beta}.
\]
Now, suppose there exists a point \( x_0 \in \mathbb{R}^n \) such that \( u(x_0) > 0 \). Since \eqref{o-pequeno2} holds, choose \( k \) sufficiently large such that \( x_0 \in B_{kr} \) and
\[
\sup_{B_r} \varphi_k(x) \leq \frac{u(x_0)}{10k^{\beta}}.
\]
On the other hand, we can estimate
\begin{align*}
\frac{u(x_0)}{k^{\beta}}
&\leq \sup_{B_{kr}} \frac{u(x)}{k^{\beta}} \\
&= \sup_{B_r} \frac{u(kx)}{k^{\beta}} \\
&= \sup_{B_r} \varphi_k(x) \\
&\leq \frac{u(x_0)}{10k^{\beta}},
\end{align*}
which leads to a contradiction. This completes the proof.
\end{proof}

\begin{proof}[{\bf Proof of Theorem \ref{Thm Liouv-II}}]
Fix \( R > 0 \) and consider \( v : \overline{B}_R \to \mathbb{R} \), the viscosity solution to the boundary value problem
\[
\left\{
\begin{array}{rclcl}
|\nabla v(x)|^{\gamma} \NDelta v(x) &=& \mathfrak{a}(x) v_{+}^{m}(x) & \text{in} & B_R, \\
v &=& \sup_{\partial B_R} u(x) & \text{on} & \partial B_R.
\end{array}
\right.
\]
Theorem \ref{existence_uniquiness} ensures that the solution \( v \) is unique. Moreover, by Lemma \ref{Comp-Princ}, we have
\begin{equation}\label{v_great_u}
v \geq u \quad \text{in} \quad B_R.
\end{equation}
From the hypothesis, for \( R \gg 1 \) sufficiently large, it follows that
\begin{equation}\label{theta_CND}
\sup_{\partial B_R} \frac{u(x)}{R^{\beta}} \leq \theta \cdot \mathrm{C}_{\mathrm{ND}},
\end{equation}
for some \( \theta < 1 \). For \( R \gg 1 \), the radial profile \eqref{Radial-profile} implies that the solution \( v = v_R \) is given by
\[
v_R(x) \coloneqq \mathrm{C}_{\mathrm{ND}} \cdot \left[ |x| - R \left( \frac{\sup_{\partial B_R} u(x)}{\mathrm{C}_{\mathrm{ND}}} \right)^{\frac{1}{\beta}} \right]_+^{\beta}.
\]
Combining this with \eqref{v_great_u} and \eqref{theta_CND}, we obtain
\begin{align*}
u(x) &\leq \mathrm{C}_{\mathrm{ND}} \cdot \left[ |x| - R \left( \frac{\sup_{\partial B_R} u(x)}{\mathrm{C}_{\mathrm{ND}}} \right)^{\frac{1}{\beta}} \right]_+^{\beta} \\
&\leq \mathrm{C}_{\mathrm{ND}} \cdot \left[ |x| - R \left( 1 - \theta^{\frac{1}{\beta}} \right) \right]_+^{\beta} \to 0 \quad \text{as} \quad R \to \infty.
\end{align*}
This completes the proof of the theorem.
\end{proof}

\section{The critical equation: Proof of Theorem \ref{Thm-SMP}}

In this section, we address the borderline scenario, i.e., when \( m = \gamma + 1 \). In this context, we prove that a Strong Maximum Principle holds.

\begin{proof}[{\bf Proof of Theorem \ref{Thm-SMP}}]
We proceed by contradiction. Suppose there exists a point \( y \in B_1 \) such that \( u(y) > 0 \). Without loss of generality, assume \( y = 0 \), and define the distance from this point to the zero set of \( u \):
\[
0 < d \coloneqq \dist(0, \{u = 0\}) < \frac{1}{10} \dist(0, \partial B_1).
\]
Define the auxiliary operator
\begin{equation}\label{L-operator}
\mathcal{L}^{1+\gamma}[u] \coloneqq |\nabla u(x)|^{\gamma} \NDelta u(x) - \mathfrak{a}(x) u_+^{1+\gamma}(x) = 0,
\end{equation}
and observe that the function \( v(x) \coloneqq u(0) \chi_{B_1} \) is locally bounded. Indeed, since \( v \) is a viscosity solution to
\[
\mathcal{L}^{1+\gamma}[v] = -\mathfrak{a}(x) u^{1+\gamma}(0) < 0,
\]
and \( u = v \) on \( \partial B_1 \), the Comparison Principle \ref{Comp-Princ} implies \( v(x) \geq u(x) \) in \( B_1 \). Consequently, \( u(0) \geq u(x) \) for all \( x \in B_1 \). 

Next, define an auxiliary function to serve as a barrier:
\begin{equation}\label{beta_0_positive}
\Phi_a(x) \coloneqq
\begin{cases}
e^{-a(d/2)} - \kappa_0, & \text{in } B_{d/2}, \\
e^{-a|x|^2} - \kappa_0, & \text{in } B_{d} \setminus B_{d/2}, \\
0, & \text{in } \mathbb{R}^n \setminus B_{d},
\end{cases}
\end{equation}
where \( \kappa_0 \coloneqq e^{-a d^2} \) for \( a \geq (2/d^2) \). Note that
\begin{equation}
|\nabla \Phi_a(x)| = 2a |x| e^{-a |x|^2} \geq a d e^{-a d^2} \coloneqq \beta_0 > 0 \quad \text{in} \quad B_d \setminus B_{d/2}.
\end{equation}
Moreover, we compute
\begin{align*}
\Delta^{\mathrm{N}}_p \Phi_a(x)
&= 2^{\gamma} a^{\gamma} |x|^{\gamma} e^{-a \gamma |x|^2} \left[ \left( 4a^2 |x|^2 e^{-a |x|^2} - 2a e^{-a |x|^2} \right) + (p - 2) \left( 4a^2 |x|^2 e^{-a |x|^2} - 2a e^{-a |x|^2} \right) \right] \\
&= (2a)^{1+\gamma} |x|^{\gamma} e^{-a (1+\gamma) |x|^2} (p - 1) (2a |x|^2 - 1).
\end{align*}
As a result, for \( x \in B_d \setminus B_{d/2} \), we have
\begin{align*}
\mathcal{L}^{1+\gamma}[\Phi_a(x)]
&= (2a)^{1+\gamma} |x|^{\gamma} e^{-a (1+\gamma) |x|^2} (p - 1) (2a |x|^2 - 1) - \mathfrak{a}(x) \left( e^{-a |x|^2} - \kappa_0 \right)^{1+\gamma} \\
&\geq (2a)^{1+\gamma} \left( \frac{d}{2} \right)^{\gamma} e^{-a (1+\gamma) d^2} (p - 1) \left( 2a \left( \frac{d}{2} \right)^2 - 1 \right) - \|\mathfrak{a}\|_{L^{\infty}(B_1)} \left( e^{-a d^2} - \kappa_0 \right)^{1+\gamma} \\
&\geq 0.
\end{align*}
Now, observe that for any constant \( \theta > 0 \),
\[
\mathcal{L}^{1+\gamma}[\theta \cdot \Phi_a(x)] = \theta^{1+\gamma} \mathcal{L}^{1+\gamma}[\Phi_a(x)] \geq 0 > \mathcal{L}^{1+\gamma}[u] \quad \text{in} \quad B_d \setminus B_{d/2}.
\]
Thus, choosing \( 1 \gg \theta > 0 \) such that
\[
u \geq \theta \cdot \Phi_a \quad \text{on} \quad \partial B_d \cup \partial B_{d/2},
\]
we can apply the Comparison Principle \ref{Comp-Princ} again to obtain
\begin{equation}\label{u_geq_theta}
u \geq \theta \cdot \Phi_a \quad \text{in} \quad B_d \setminus B_{d/2}.
\end{equation}
On the other hand, we can rewrite the equation \eqref{L-operator} as
\[
|\nabla u|^{\gamma} \NDelta u = \mathfrak{a}(x) [u^{\delta}(x)] \cdot u^{1+\gamma - \delta} = \overline{\mathfrak{a}}(x) u^{m}(x),
\]
where \( m \coloneqq 1 + \gamma - \mu \) and the bounded Thiele modulus \( \overline{\mathfrak{a}}(x) \coloneqq \mathfrak{a}(x) [u^{\mu}(x)] \). By Theorem \ref{Improved Regularity} and Remark \ref{remark of improved regularity}, we have
\[
\sup_{B_r(y_0)} u \leq \mathrm{C} r^{2 + \gamma},
\]
for any \( y_0 \in \partial B_d \cap \partial \{u > 0\} \). Now, choose a radius \( r_0 > 0 \) sufficiently small such that
\begin{equation}\label{contradiction_constant}
\mathrm{C} \cdot r_0^{2 + \gamma} \leq \frac{1}{2} \theta \beta_0 \cdot r_0.
\end{equation}
To conclude, combine \eqref{beta_0_positive}, \eqref{u_geq_theta}, and \eqref{contradiction_constant} as follows:
\begin{align*}
0 < \theta r_0 \beta_0
&\leq \theta r_0 \inf_{B_d \setminus B_{d/2}} |\nabla \Phi_a| \\
&\leq \theta r_0 \inf_{B_d \cap \partial B_{r_0}(y_0)} |\nabla \Phi_a| \\
&\leq \theta r_0 \frac{|\Phi_a(x) - \Phi_a(y_0)|}{|x - y_0|} \\
&\leq \sup_{B_d \cap \partial B_{r_0}(y_0)} \theta \cdot |\Phi_a(x) - \Phi_a(y_0)| \\
&\leq \sup_{B_{r_0}(y_0)} \theta \cdot |\Phi_a(x) - \Phi_a(y_0)| \\
&\leq \sup_{B_{r_0}(y_0)} \theta \cdot \Phi_a \\
&\leq \sup_{B_{r_0}(y_0)} u \\
&\leq \mathrm{C} \cdot r_0^{2 + \gamma} \\
&\leq \frac{1}{2} \theta r_0 \beta_0,
\end{align*}
which is a contradiction. This completes the proof.
\end{proof}

\section{Some extensions and final comments}

\subsection*{H\'{e}non-type models with strong absorption}

In this final section, we will discuss H\'{e}non-type equations driven by quasilinear elliptic models in non-divergence form with strong absorption, as follows:

\begin{equation}\label{Eq-Henon-type}
|\nabla u|^\gamma \Delta_p^\mathrm{N} u(x) = \mathfrak{h}(x) u_+^{m}(x) \quad \text{in} \quad B_1,
\end{equation}
where the height function \(\mathfrak{h}: B_1 \to \mathbb{R}_+\) satisfies the following property: given \(\alpha > 0\) and a closed subset \(\mathrm{F} \Subset B_1\), we have
\begin{equation}\label{Homo_cond}
\mathrm{dist}(x, \mathrm{F})^{\alpha} \lesssim \mathfrak{h}(x) \lesssim \mathrm{dist}(x, \mathrm{F})^{\alpha}.
\end{equation}

In this context, we can establish a higher regularity estimate for bounded viscosity solutions along free boundary points.

\begin{theorem}[{\bf Higher regularity estimates}]\label{Hessian_continuity}
Let $u \in \mathrm{C}^0(B_1)$ be a non-negative, bounded viscosity solution of problem \eqref{Eq-Henon-type}. Given $x_0 \in B_{1/2}\cap \mathfrak{h}^{-1}(0)$. If $x_0 \in B_{1/2} \cap \partial \{ u > 0 \}$, then
\begin{equation}\label{Higher Reg}
u(x) \leq \mathrm{C}\cdot \|u\|_{L^{\infty}(B_1)}|x-x_0|^{\frac{\gamma+2+\alpha}{\gamma+1 - m}}
\end{equation}
for every $x \in \{u>0\} \cap B_{1/2}$, where $\mathrm{C} > 0$ depends only on universal parameters.
\end{theorem}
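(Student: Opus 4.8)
The plan is to mimic the proof of Theorem \ref{Improved Regularity}, replacing the constant flatness decay by a geometric decay whose rate encodes the extra vanishing of $\mathfrak{h}$ near $\mathrm{F}$. Assume $x_0 = \vec 0 \in \partial\{u>0\}\cap \mathfrak h^{-1}(0)$ and normalize $\|u\|_{L^\infty(B_1)}\le 1$. Write $\beta_\alpha := \frac{\gamma+2+\alpha}{\gamma+1-m}$; note $\beta_\alpha > \beta$. The first step is a flatness lemma tailored to \eqref{Eq-Henon-type}: for every $\delta>0$ there is $r=r(n,\delta,\alpha)\in(0,1)$ such that any normalized viscosity solution of $|\nabla u|^\gamma\Delta_p^{\mathrm N}u=\zeta^2\mathfrak h(x)u_+^m$ in $B_1$ with $u(0)=0$, $0<\zeta\le r$, and $0\in\mathrm F$ satisfies $\sup_{B_{1/2}}u\le\delta$. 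This is proved exactly as Lemma \ref{Flatness Estimate}: the compactness argument still forces the limit $u_\infty$ to solve $|\nabla u_\infty|^\gamma\Delta_p^{\mathrm N}u_\infty=0$ with $u_\infty(0)=0$, because the right-hand side $\zeta^2\mathfrak h(x)u_+^m$ has $L^\infty(B_1)$ norm controlled by $\zeta^2\|\mathfrak h\|_{L^\infty}$, which goes to zero along the contradicting sequence; then Lemma \ref{CT-Lemma}, Theorem \ref{Thm-Equiv-Sol}, and the Strong Maximum Principle of \cite{Vazq84} give $u_\infty\equiv 0$.

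The second and crucial step is the iteration. Set $\delta := 2^{-\beta_\alpha}$, take the corresponding $r=r(n,\delta,\alpha)$, and put $\tau := r\,\kappa^{-1/\beta_\alpha}$ with $\kappa = 1/\|u\|_{L^\infty(B_1)}$. Then $\varphi_1(x):=\kappa\,u(\tau x)$ solves $|\nabla\varphi_1|^\gamma\Delta_p^{\mathrm N}\varphi_1 = \kappa^{1+\gamma-m}\tau^{2+\gamma}\mathfrak h(\tau x)\varphi_1^m$; since $\mathrm{dist}(\tau x,\mathrm F)\lesssim\tau$ near the origin by \eqref{Homo_cond}, the right-hand side coefficient is $\lesssim \kappa^{1+\gamma-m}\tau^{2+\gamma+\alpha}$, and by the choice of $\tau$ this is of the form $\zeta^2$ with $\zeta\le r$ up to a universal constant absorbed into $r$ — here one uses $2+\gamma+\alpha = \beta_\alpha(1+\gamma-m)$, which is precisely why the exponent $\beta_\alpha$ is forced. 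Applying the flatness lemma gives $\sup_{B_{1/2}}\varphi_1\le 2^{-\beta_\alpha}$. Now define $\varphi_{j}(x):=2^{\beta_\alpha}\varphi_1(x/2^j)$ (equivalently rescale at dyadic scales); a direct computation — identical in spirit to the $\varphi_2$ computation in the proof of Theorem \ref{Improved Regularity}, with $2^{\beta}$ replaced by $2^{\beta_\alpha}$ and using $\mathrm{dist}(2^{-j}\tau x,\mathrm F)\lesssim 2^{-j}\tau$ — shows each $\varphi_j$ again falls under the hypotheses of the flatness lemma, so $\sup_{B_{1/2}}\varphi_j\le 2^{-\beta_\alpha}$, whence inductively $\sup_{B_{2^{-j}}}\varphi_1\le 2^{-j\beta_\alpha}$. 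Finally, for $0<\rho\le r/2$ pick $j$ with $2^{-(j+1)}\le\rho/r\le 2^{-j}$ and unwind the scaling to conclude $\sup_{B_\rho}u\le (2/r)^{\beta_\alpha}\|u\|_{L^\infty(B_1)}\rho^{\beta_\alpha}$, which is \eqref{Higher Reg}.

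The main obstacle is making the dyadic iteration genuinely self-improving: one must check that at every scale $2^{-j}$ the rescaled coefficient $\tilde{\mathfrak h}_j(x)=c_j\,\mathfrak h(2^{-j}\tau x)$ still satisfies a bound of the form $\le\zeta_j^2$ with $\zeta_j\le r$, with $r$ fixed \emph{once and for all}. This works because, when $0\in\mathrm F$, the homogeneity \eqref{Homo_cond} gives $\mathfrak h(2^{-j}\tau x)\lesssim(2^{-j}\tau|x|)^\alpha\lesssim 2^{-j\alpha}\tau^\alpha$ on $B_1$, and the compensating factors $2^{j(2+\gamma)}$ and $2^{-j\beta_\alpha(1+\gamma-m)}$ coming from the normalization cancel exactly against $2^{-j\alpha}$ by the defining relation $\beta_\alpha(1+\gamma-m)=\gamma+2+\alpha$; so no deterioration occurs and a single universal $r$ suffices. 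A secondary subtlety is that $x_0\in\mathrm F$ is needed for this cancellation — if $x_0\in\mathfrak h^{-1}(0)$ but $x_0\notin\mathrm F$ one only gets the weaker exponent $\beta$ from Theorem \ref{Improved Regularity} once the scale drops below $\mathrm{dist}(x_0,\mathrm F)$; the statement as phrased implicitly handles this since on $\mathfrak h^{-1}(0)$ one has $\mathrm{dist}(x_0,\mathrm F)=0$ by \eqref{Homo_cond}, so $x_0\in\overline{\mathrm F}$ and, $\mathrm F$ being closed, $x_0\in\mathrm F$. Beyond this, everything is routine scaling bookkeeping, exactly parallel to Section 3.1.
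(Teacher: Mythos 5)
Your proposal is correct, but it takes a genuinely different route from the paper. You first prove a H\'enon-adapted flatness lemma (one compactness/contradiction step, using $\|\zeta^2\mathfrak h\, u_+^m\|_{L^\infty}\to0$, the Cutting Lemma, Theorem \ref{Thm-Equiv-Sol} and V\'azquez's Strong Maximum Principle) and then run the same dyadic scaling iteration as in the proof of Theorem \ref{Improved Regularity}, the key new observation being the exact cancellation $\hat\beta(1+\gamma-m)=2+\gamma+\alpha$ together with $x_0\in\mathrm F$ (which, as you note, follows from $x_0\in\mathfrak h^{-1}(0)$, \eqref{Homo_cond} and the closedness of $\mathrm F$), so that the rescaled coefficient stays uniformly small across all dyadic scales. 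The paper instead skips a separate flatness lemma and directly establishes the discrete monotonicity inequality
\[
\mathfrak S_{j+1}\ \le\ \max\bigl\{\mathrm C\,2^{-\hat\beta(j+1)},\ 2^{-\hat\beta}\mathfrak S_j\bigr\},\qquad \mathfrak S_j:=\sup_{B_{2^{-j}}}u,
\]
via a single contradiction/compactness argument in the style of \cite{CKS00}: assuming a sequence $(u_k,j_k)$ violating this, one rescales by the (possibly non-maximal) quantity $\mathfrak S_{j_k+1}$ rather than by $\|u\|_{L^\infty(B_1)}$, gets a nontrivially normalized limit $v_\infty$ with $v_\infty(0)=0$ and $\sup_{\overline B_{1/2}}v_\infty=1$ solving the homogeneous equation, and contradicts the Strong Maximum Principle. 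The two strategies are logically equivalent here: yours is more modular and mirrors Section~3.1 exactly, while the paper's packs the compactness step into the dyadic recursion and thereby avoids restating and reproving a flatness lemma for the weighted right-hand side. A minor caveat in your write-up (inherited from the paper's own notation): the auxiliary functions in the induction should carry the amplification $2^{j\hat\beta}$, i.e.\ $\varphi_{j+1}(x)=2^{j\hat\beta}\varphi_1(2^{-j}x)$, so that $\sup_{B_1}\varphi_{j+1}\le1$ at each step; with that reading your iteration is airtight.
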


\begin{proof} 
For simplicity and without loss of generality, we assume \( x_0 = \vec{0} \). By combining discrete iterative techniques with continuous reasoning (see \cite{CKS00}), it is well established that proving estimate \eqref{Higher Reg} is equivalent to verifying the existence of a universal constant \( \mathrm{C} > 0 \), such that for all \( j \in \mathbb{N} \), the following holds:
\begin{equation}\label{higher1}
\mathfrak{S}_{j+1} \leq \max\left\{\mathrm{C} 2^{-\hat{\beta}(j+1)}, 2^{-\hat{\beta}} \mathfrak{S}_j\right\},
\end{equation}
where 
\begin{equation}\label{beta_hat}
\mathfrak{S}_j \coloneqq \sup_{B_{2^{-j}}} u(x) \quad \text{and} \quad \hat{\beta} \coloneqq \frac{2 + \gamma + \alpha}{1 + \gamma - m}.
\end{equation}
Suppose, for the sake of contradiction, that \eqref{higher1} fails to hold, \textit{i.e.,} for each \( k \in \mathbb{N} \), there exists \( j_k \in \mathbb{N} \) such that
\begin{equation}\label{higher2}
\mathfrak{S}_{j_k + 1} > \max\left\{k 2^{-\hat{\beta}(j_k + 1)}, 2^{-\hat{\beta}} \mathfrak{S}_{j_k}\right\}.
\end{equation}
Now, for each \( k \in \mathbb{N} \), we define the rescaled function \( v_k: B_1 \rightarrow \mathbb{R} \) as
\[
v_k(x) \coloneqq \frac{u(2^{-j_k} x)}{\mathfrak{S}_{j_k + 1}}. 
\]
Note that
\begin{eqnarray}
0 \leq v_k(x) \leq 2^{\hat{\beta}}; \label{vk1}\\
v_k(0) = 0; \label{vk2}\\ 
\sup_{\overline{B}_{1/2}} v_k(x) = 1. \label{vk3}
\end{eqnarray}
Moreover, observe that
\begin{align}
|\nabla v_k|^\gamma \Delta_p^\mathrm{N} v_k &= \left(\frac{2^{-j_k(2 + \gamma)}}{\mathfrak{S}_{j_k + 1}^{1 + \gamma - m}}\right) \mathfrak{h}\left(2^{-j_k} x\right) (v_k)_+^m(x) \nonumber \\[0.2cm]
&\lesssim \left(\frac{2^{-j_k(2 + \gamma + \alpha)}}{\mathfrak{S}_{j_k + 1}^{1 + \gamma - m}}\right) \mathrm{dist}(x, \mathrm{F})^\alpha (v_k)_+^m \nonumber \\
&\coloneqq \Xi_k \label{vksolution}.
\end{align}
Analogously to Lemma \ref{Flatness Estimate}, we can find \( v_\infty \in \mathrm{C}^{0, \alpha} \) such that \( v_k \to v_{\infty} \) locally uniformly. Moreover, \( v_\infty \) satisfies
\begin{equation}\label{Vasco}
v_\infty(0) = 0 \quad \text{and} \quad \sup_{\overline{B}_{1/2}} v_\infty(x) = 1.
\end{equation}
Now, using \eqref{higher2} and \eqref{vk1}, we obtain 
\[
\Xi_k \leq 2^{\hat{\beta}} \mathrm{diam}(\Omega) \frac{2^{2 + \gamma + \alpha + \hat{\beta} m}}{k} \to 0 \quad \text{as} \quad k \to \infty.
\]
By stability (see \cite[Appendix]{Attou18}), together with \eqref{vksolution}, we conclude that 
\[
|\nabla v_\infty|^\gamma \Delta_p^\mathrm{N} v_\infty = 0 \quad \text{in} \quad B_{8/9}.
\]
Finally, using Theorem \ref{Thm-Equiv-Sol} and V\'{a}zquez's Strong Maximum Principle from \cite{Vazq84}, we deduce that \( v_{\infty} \equiv 0 \) in \( B_{8/9} \), which contradicts \eqref{Vasco}. Thus, \eqref{higher1} holds.

To complete the proof, for each \( r \in (0, 1) \), let \( j \in \mathbb{N} \) be such that
\[
2^{-(j + 1)} \leq r \leq 2^{-j}.
\]
Then, by using \eqref{higher1}, we establish the existence of a universal constant \( \mathrm{C} > 0 \) such that
\[
\sup_{B_r} u \leq \sup_{B_{2^{-j}}} u \leq \frac{\mathrm{C}}{2^{\hat{\beta}}} r^{\hat{\beta}}.
\]
Thus, the theorem is proven.
\end{proof}

\begin{remark}\label{Example6.7}
Another relevant model to consider in \eqref{Problem} is the problem with general distance weights and ``noise terms'', as follows:
\[
|\nabla u(x)|^{\gamma} \Delta_p^\mathrm{N} u(x) = \sum_{i=1}^{l_0} \mathrm{dist}^{\alpha_i}(x, \mathrm{F}_i) u_{+}^{m_i}(x) + g_i(|x|) \quad \text{in} \quad B_1,
\]
where \(\mathrm{F}_i \Subset B_1\) are closed sets, \( m_i \in [0, \gamma + 1) \), \(\alpha_i > 0\), and \( g_i: B_1 \to \mathbb{R} \) are continuous functions such that
\[
\lim_{{x \to \mathrm{F}_i \atop x \notin \mathrm{F}_i}} \frac{g_i(|x|)}{\mathrm{dist}(x, \mathrm{F}_i)^{\kappa_i}} = \mathfrak{L}_i \in [0, \infty) \quad \text{for some} \quad \kappa_i \geq 0.
\]

In this context, non-negative viscosity solutions belong to \( C_{\text{loc}}^{\iota} \), where
\[
\iota \coloneqq \min_{1 \leq i \leq l_0} \left\{\frac{\gamma + 2 + \alpha_i}{\gamma + 1 - m_i}, \frac{\gamma + 2 + \kappa_i}{\gamma + 1}\right\},
\]
along the set \(\displaystyle \left(\bigcap_{i=1}^{l_0} \mathrm{F}_i\right) \cap \partial\{u > 0\} \cap B_{1/2}\).
\end{remark}

\bigskip

\subsection*{Multi-phase problems with strong absorption}

As a final remark, our results can be extended to the class of multi-phase problems, such as those developed in \cite{WYF2025} (see also \cite{BJrDaSRampRic23} and \cite{DaSilvaRic2020} for the fully nonlinear counterpart of such results). In this direction, we consider the model
\[
\left(|\nabla u(x)|^{\gamma} + \sum_{i=1}^{k_0} \mathfrak{a}_i(x) |\nabla u(x)|^{\gamma_i}\right) \Delta_p^\mathrm{N} u(x) = f(x, u) \lesssim \mathfrak{a}(x) u_+^m(x) \quad \text{in} \quad B_1,
\]
where the exponents \(\gamma, \gamma_i \in \mathbb{R}\) satisfy
\[
0 < \gamma \leq \gamma_1 \leq \gamma_2 \leq \cdots \leq \gamma_{k_0} < \infty,
\]
and the modulating functions \(\mathfrak{a}_i: B_1 \to \mathbb{R}\) satisfy
\[
0 < \mathfrak{A}_0 \leq \mathfrak{a}_i \leq \mathfrak{A}_1 \quad \text{for all } 1 \leq i \leq k_0.
\]
Therefore, by following the approach outlined in Theorem \ref{Improved Regularity}, we can prove the following estimate: for \( x_0 \in \partial\{u > 0\} \cap B_{1/2} \),
\[
u(z) \leq \mathrm{C}^{\prime}_{\mathcal{G}}(n, \gamma, \gamma_i, m, p, \mathfrak{A}_1, \lambda_0) \|u\|_{L^{\infty}(B_1)} |z - x_0|^{\frac{\gamma + 2}{\gamma + 1 - m}}
\]
for any point \( z \in \{u > 0\} \cap B_{1/2} \).

Finally, using reasoning similar to \cite[Theorem 1.2]{DaSilvaRic2020}, we can establish the non-degeneracy estimate: for \( y \in \overline{\{u > 0\}} \cap B_{1/2} \), there exists a universal positive constant \(\mathrm{C}^{\prime}_{\mathrm{ND}}\) such that for \( r \in (0, 1/2) \),
\[
\sup_{\overline{B}_r(y)} u \geq \mathrm{C}^{\prime}_{\mathrm{ND}}(n, \gamma, \gamma_i, m, p, \mathfrak{A}_0, \lambda_0) \cdot r^{\frac{\gamma + 2}{\gamma + 1 - m}}.
\]

\subsection*{Acknowledgments}

C. Alcantara was partially supported by the Coordenação de Aperfeiçoamento de Pessoal de Nível Superior (CAPES) – Brasil and by Stone Instituição de Pagamento S.A., under the Project StoneLab. J.V. da Silva has received partial support from CNPq-Brazil under Grant No. 307131/2022-0 and FAEPEX-UNICAMP 2441/23 Editais Especiais - PIND - Projetos Individuais (03/2023). G.S. S\'{a} expresses gratitude for the PDJ-CNPq-Brazil (Postdoctoral Scholarship - 174130/2023-6). This study was partly financed by the CAPES—Brazil - Finance Code 001.


\begin{thebibliography}{99}


\bibitem{Alt-Phillips86} Alt, H.W. and Phillips, D.
\textit{A free boundary problem for semilinear elliptic equations}.
J. Reine Angew. Math. 368 (1986), 63–107.

\bibitem{ALT16}  Ara\'{u}jo, D.J., Leit\~{a}o, R. and Teixeira, E.,  
\textit{Infinity Laplacian equation with strong absorptions}. J. Funct. Anal. 270 (2016), no.6, 2249–2267.

\bibitem{ART17} Ara\'{u}jo, D.J. Ricarte, G.C. and Teixeira, E.V.
\textit{Singularly perturbed equations of degenerate type}.
Ann. Inst. H. Poincar\'{e} C Anal. Non Lin\'{e}aire 34 (2017), no. 3, 655–678.

\bibitem{Aris75-I} Aris, R. 
\textit{The mathematical theory of diffusion and reaction in permeable catalysts}, The theory of the steady state, vol. I (Oxford University Press, New York, 1975) 444.

\bibitem{Aris75-II} Aris, R. 
\textit{The mathematical theory of diffusion and reaction in permeable catalysts}, Questions of uniqueness, stability, and transient behaviour, vol. II (Oxford University Press, New York, 1975) 217

\bibitem{AP20} Arroyo, \'{A}. and  Parviainen,  M.
\textit{Asymptotic H\'{o}lder regularity for the ellipsoid process}.
ESAIM Control Optim. Calc. Var. 26 (2020), Paper No. 112, 31 pp.

\bibitem{Arro-Parv24} Arroyo, A. and Parviainen, M. 
\textit{H\"{o}lder estimate for a tug-of-war game with $1<p<2$ from Krylov-Safonov regularity theory}.
Rev. Mat. Iberoam. 40 (2024), no. 3, 1023–1044.

\bibitem{AHP21} Attouchi, A., Luiro, H. and Parviainen,  M.
\textit{Gradient and Lipschitz estimates for tug-of-war type games}.
SIAM J. Math. Anal. 53 (2021), no. 2, 1295–1319.

\bibitem{Attou18} Attouchi, A., and  Ruosteenoja, E. \textit{Remarks on regularity for $p$-Laplacian type equations in non-divergence form.}
J. Differential Equations 265 (2018), no. 5, 1922–1961.

\bibitem{Band-Sperb-Stak84} Bandle, C. and Sperb, R.P. and Stakgold, I.
\textit{Diffusion and reaction with monotone kinetics}.
Nonlinear Anal. 8 (1984), no. 4, 321–333.

\bibitem{BJrDaSRampRic23} Bezerra J\'{u}nior, E.C., da Silva, J.V., Rampasso, G.C. and Ricarte, G.C.,
\textit{Global regularity for a class of fully nonlinear PDEs with unbalanced variable degeneracy}.
J. Lond. Math. Soc. (2) 108 (2023), no. 2, 622–665.

\bibitem{BirDem} Birindelli, I. and Demengel, F.
\textit{Comparison principle and Liouville type results for singular fully nonlinear operators}.
Ann. Fac. Sci. Toulouse Math. (6) 13 (2004), no. 2, 261–287.

\bibitem{BirDem06} Birindelli, I. and Demengel, F.
\textit{First eigenvalue and maximum principle for fully nonlinear singular operators}. 
Adv. Differential Equations 11 (2006), no. 1, 91–119.

\bibitem{BirDem07} Birindelli, I. and Demengel, F.
\textit{The Dirichlet problem for singular fully nonlinear operators}.
Discrete Contin. Dyn. Syst. 2007, Dynamical systems and differential equations. Proceedings of the 6th AIMS International Conference, 110–121.
ISBN: 978-1-60133-010-9; 1-60133-010-3.

\bibitem{BCMR22} Blanc, P., Charro, F.,  Manfredi, J.J. and Rossi, J.D. 
\textit{Asymptotic mean-value formulas for solutions of general second-order elliptic equations}.
Adv. Nonlinear Stud. 22 (2022), no. 1, 118-142.

\bibitem{BlancRossi-Book} Blanc, P. and Rossi, J.D.
\textit{Game theory and partial differential equations}.
De Gruyter Ser. Nonlinear Anal. Appl., 31
De Gruyter, Berlin, [2019], ©2019. xvii+211 pp.
ISBN:978-3-11-061925-6. ISBN:978-3-11-062179-2. ISBN:978-3-11-061932-4.

\bibitem{CKS00} Caffarelli, L.A., Karp, L. and  Shahgholian, H.   
\textit{Regularity of a free boundary with application to the Pompeiu problem}. Ann. of Math. (2) 151 (2000), no.1, 269–292.

\bibitem{Choe91} Choe, H.J.
\textit{A regularity theory for a general class of quasilinear elliptic partial differential equations and obstacle problems}.
Arch. Rational Mech. Anal. 114 (1991), no. 4, 383–394.

\bibitem{CIL} Crandall, M.G., Ishii, H. and Lions, P.-L.
\textit{User's guide to viscosity solutions of second order partial differential equations}.
Bull. Amer. Math. Soc. (N.S.) 27 (1992), no. 1, 1–67.

\bibitem{daSLR21} da Silva, J.V., Leit\~{a}o J\'{u}nior, R.A. and Ricarte, G. C.,
\textit{Geometric regularity estimates for fully nonlinear elliptic equations with free boundaries}. Math. Nachr. 294 (2021), no. 1, 38-55.

\bibitem{daSNorb21} da Silva, J.V. and Nornberg, G.
\textit{Regularity estimates for fully nonlinear elliptic PDEs with general Hamiltonian terms and unbounded ingredients}.
Calc. Var. Partial Differential Equations 60 (2021), no. 6, Paper No. 202, 40 pp.

\bibitem{DaSilvaRic2020} da Silva, J.V. and Ricarte, G.C.
\textit{Geometric gradient estimates for fully nonlinear models with non-homogeneous degeneracy and applications}.
Calc. Var. Partial Differential Equations 59 (2020), no. 5, Paper No. 161, 33 pp.

\bibitem{daSRS19} da Silva, J.V., Rossi, J.D. and Salort, A.M.,
\textit{Regularity properties for $p$-dead core problems and their asymptotic limit as $p\to \infty$}.
J. Lond. Math. Soc. (2) 99 (2019), no. 1, 69-96.


\bibitem{daSS18} da Silva, J.V., Salort, A.M.,
\textit{Sharp regularity estimates for quasi-linear elliptic dead core problems and applications}.
Calc. Var. Partial Differential Equations 57 (2018), no. 3, Paper No. 83, 24 pp.

\bibitem{Diaz85} D\'{i}az, J.I.
\textit{Nonlinear partial differential equations and free boundaries}. Vol. I.
Elliptic equations Res. Notes in Math., 106 Pitman (Advanced Publishing Program), Boston, MA, 1985. vii+323 pp.
ISBN:0-273-08572-7.

\bibitem{Fried-Phill84} Friedman, A. and  Phillips, D.
\textit{The free boundary of a semilinear elliptic equation}.
Trans. Amer. Math. Soc. 282 (1984), no. 1, 153–182.


\bibitem{JJ12} Julin, V. and Juutinen, P.
\textit{A new proof for the equivalence of weak and viscosity solutions for the p-Laplace equation}.
Comm. Partial Differential Equations 37 (2012), no. 5, 934–946.


\bibitem{KKPS2000} Karp, L., Kilpel\"{a}inen, T., Petrosyan, A. and Shahgholian, H.
\textit{On the porosity of free boundaries in degenerate variational inequalities}.
J. Differential Equations 164 (2000), no. 1, 110–117.

%\bibitem{Kawohl2020} Kawohl, B.
%\textit{How I met the normalized $p$-Laplacian $\Delta_p^{\mathrm{N}}$ and what we know now about mean values and concavity properties}.
%J. Elliptic Parabol. Equ. 6 (2020), no. 1, 113-121.

\bibitem{KoKo17} Koike, S. and Kosugi, T.
\textit{Maximum principle for Pucci equations with sublinear growth in Du and its applications}.
Nonlinear Anal. 160 (2017), 1–15.

\bibitem{KR97} Koskela, P. and Rohde, S. 
\textit{Hausdorff dimension and mean porosity}.
Math. Ann. 309 (1997), no. 4, 593–609.

\bibitem{Lee-Shahg2003} Lee, K. and Shahgholian, H.
\textit{Hausdorff measure and stability for the p-obstacle problem ($2<p<\infty$)}.
J. Differential Equations 195 (2003), no. 1, 14–24.

\bibitem{Lewicka20} Lewicka, M.
\textit{A course on tug-of-war games with random noise}.
Introduction and basic constructions. Universitext
Springer, Cham, [2020], ©2020. ix+254 pp.
ISBN:978-3-030-46209-3, ISBN:978-3-030-46208-6.

\bibitem{LuiroParv18} Luiro, H. and Parviainen, M.
\textit{Regularity for nonlinear stochastic games}.
Ann. Inst. H. Poincar\'{e} C Anal. Non Lin\'{e}aire 35 (2018), no. 6, 1435–1456.

\bibitem{LPS13} Luiro, H., Parviainen, M. and Saksman, E.
\textit{Harnack's inequality for $p$-harmonic functions via stochastic games}.
Comm. Partial Differential Equations 38 (2013), no. 11, 1985-2003.

\bibitem{MPR12} Manfredi, J.J.,  Parviainen, M. and Rossi, J.D. 
\textit{Dynamic programming principle for tug-of-war games with noise}.
ESAIM Control Optim. Calc. Var. 18 (2012), no. 1, 81-90.

\bibitem{Parvia2024} Parviainen, M.
\textit{Notes on tug-of-war games and the $p$-Laplace equation}.
Springer Briefs PDEs Data Sci.
Springer, Singapore, [2024], ©2024. viii+76 pp. ISBN:978-981-99-7878-6.

\bibitem{PerShef08} Peres, Y. and Sheffield, S., \textit{Tug-of-war with noise: a game-theoretic view of the $p$-Laplacian}, Duke Math. J. 145 (1) (2008) 91–120.

\bibitem{Phillips83} Phillips, D.
\textit{Hausdorff measure estimates of a free boundary for a minimum problem}.
Comm. Partial Differential Equations 8 (1983), no. 13, 1409-1454.

\bibitem{PucciSerrin2004} Pucci, P. and Serrin, J.B.
\textit{The strong maximum principle revisited}. 
J. Differential Equations 196 (2004), no. 1, 1–66.

\bibitem{PS06} Pucci, P. and Serrin, J.,
\textit{Dead cores and bursts for quasilinear singular elliptic equations}.
SIAM J. Math. Anal. 38 (2006), no. 1, 259-278.

\bibitem{Rossi11} Rossi, J.D.
\textit{Asymptotic mean value properties for the $p$-Laplacian}.
SeMA J. No. 56 (2011), 35–62.

\bibitem{Serrin64} Serrin, J.
\textit{Local behavior of solutions of quasi-linear equations}.
Acta Math. 111 (1964), 247-302.

\bibitem{Siltak18} Siltakoski, J.
\textit{Equivalence of viscosity and weak solutions for the normalized $p(x)$-Laplacian}.
Calc. Var. Partial Differential Equations 57 (2018), no. 4, Paper No. 95, 20 pp.

\bibitem{Stak86} Stakgold, I.
\textit{Reaction-diffusion problems in chemical engineering.Nonlinear diffusion problems} (Montecatini Terme, 1985), 119–152.
Lecture Notes in Math., 1224
Springer-Verlag, Berlin, 1986
ISBN:3-540-17192-4.

\bibitem{Teix16} Teixeira, E.V.
\textit{Regularity for the fully nonlinear dead core problem}.
Math. Ann. 364 (2016), no. 3-4, 1121–1134.

\bibitem{Teix18} Teixeira, E.V.
\textit{Nonlinear elliptic equations with mixed singularities}.
Potential Anal. 48 (2018), no. 3, 325–335.

\bibitem{Tei22} Teixeira, E.V.  
\textit{On the critical point regularity for degenerate diffusive equations}.
Arch. Ration. Mech. Anal. 244 (2022), no.2, 293–316.

\bibitem{Vazq84} V\'{a}zquez, J.L.
\textit{A strong maximum principle for some quasilinear elliptic equations}.
Appl. Math. Optim. 12 (1984), no. 3, 191-202.

\bibitem{WYF2025} 
Wang, J., Yin, Y. and Jiang, F.
\textit{Regularity of solutions to degenerate normalized $p$-Laplacian equation with general variable exponents}. \url{https://doi.org/10.48550/arXiv.2303.01190}.


\bibitem{Zaj87} Zaj\'{i}\u{c}ek, L. \textit{Porosity and 
porosity}, Real Anal. Exchange 13 (1987/88) 314-350.

\end{thebibliography}
\end{document}